\documentclass[11pt,reqno]{amsart}

\usepackage{geometry}
\usepackage[T1]{fontenc}
\usepackage[hidelinks]{hyperref} 

\usepackage{breakcites}

\usepackage{algorithm}
\usepackage{algpseudocode}
\usepackage{graphicx}
\usepackage{dsfont}
\usepackage{setspace}
\usepackage{enumitem}
\usepackage[dvipsnames]{xcolor}
\usepackage{graphicx}
\usepackage{epstopdf}
\usepackage{csquotes}
\usepackage{mathtools}
\usepackage{amssymb, amsmath,amsthm, amsfonts}
\usepackage{ifthen}

\usepackage{caption}

\usepackage{verbatim}
\usepackage{pifont}
\usepackage{bm}  
\usepackage{stackengine}
\usepackage{bbm}

\allowdisplaybreaks
\usepackage{accents}
\usepackage[capitalise]{cleveref}
\usepackage{xpatch}
\makeatletter
\xpatchcmd{\@thm}{\thm@headpunct{.}}{\thm@headpunct{}}{}{}

\crefname{equation}{Equation}{equations}
\Crefname{equation}{Equation}{Equations}

\theoremstyle{plain}
\newtheorem{theorem}{Theorem}[section]
\newtheorem{proposition}{Proposition}[section]
\newtheorem{lemma}{Lemma}[section]
\newtheorem{corollary}{Corollary}[section]
\newtheorem{assumption}{Assumption}[section]

\theoremstyle{definition}
\newtheorem{definition}{Definition}[section]
\newtheorem{example}{Example}[section]

\newtheoremstyle{italicremark}
  {}{}                 
  {\itshape}           
  {}                   
  {\bfseries}          
  {.}                  
  { }                  
  {}                   

\theoremstyle{italicremark}
\newtheorem{remark}{Remark}

\usepackage{graphicx}
\usepackage{subcaption}
\usepackage{enumitem}
\usepackage{url}
\usepackage{wrapfig,framed}
\usepackage{setspace}
\usepackage{graphicx}
\usepackage{epstopdf}
\usepackage{csquotes}
\usepackage{mathtools}
\usepackage{amssymb, amsmath,amsthm, amsfonts}
\usepackage{ifthen}
\usepackage{verbatim}
\usepackage{pifont}
\usepackage{bm}  
\usepackage{stackengine}
\usepackage{bbm}

\allowdisplaybreaks
\usepackage{accents}

\usepackage{xpatch}
\makeatletter
\xpatchcmd{\@thm}{\thm@headpunct{.}}{\thm@headpunct{}}{}{}
\makeatother

\usepackage{tikz}
\usetikzlibrary{positioning}

\DeclareMathOperator{\E}{\mathbb{E}}

\DeclareMathOperator{\id}{id}

\DeclareMathOperator{\argmin}{argmin}

\DeclareMathOperator{\supp}{spt}

\newcommand{\R}{\mathbb{R}}

\newcommand{\cB}{\mathcal{B}}

\newcommand{\cI}{\mathcal{I}}

\newcommand{\cL}{\mathcal{L}}

\newcommand{\cN}{\mathcal{N}}
\newcommand{\cO}{\mathcal{O}}
\newcommand{\cP}{\mathcal{P}}

\newcommand{\sD}{\mathsf{D}}

\newcommand{\sF}{\mathsf{F}}

\newcommand{\sH}{\mathsf{H}}

\newcommand{\sI}{\mathsf{I}}

\newcommand{\sR}{\mathsf{R}}

\newcommand{\sV}{\mathsf{V}}
\newcommand{\sW}{\mathsf{W}}

\newcommand{\NN}{\mathbb{N}}

\newcommand{\RR}{\mathbb{R}}

\newcommand{\op}{\mathrm{op}}
\newcommand{\F}{\mathrm{F}}

\newcommand{\llangle}{\left\langle}
\newcommand{\rrangle}{\right\rangle}

\newcommand{\tr}{\mathsf{tr}}

\newcommand{\dom}{\mathrm{Dom}}

\newcommand{\ac}{\mathrm{ac}}

\newcommand{\IGW}{\mathsf{IGW}}

\newcommand{\W}{\mathsf{W}}

\definecolor{darkblue}{rgb}{0.0,0.0,0.66}  
\definecolor{darkred}{rgb}{100,0.0,0.0} 
\newcommand{\KL}{\sH(\cdot\|\gamma)}

\newcommand{\zg}[1]{{\color{blue}[ZG: #1]}}

\newcommand{\vk}[1]{{\color{orange}[VK: #1]}}


\begin{document}

\thanks{
Z. Goldfeld is partially supported by NSF grants CCF-2308446 and CCF-2046018. K. Kato is partially supported by NSF grant DMS-2413405.}

\author[V. Karumanchi]{Venkatkrishna Karumanchi}
\address[V. Karumanchi]{
School of Operations Research and Information Engineering, Cornell University.
}
\email{vk383@cornell.edu}

\author[Z. Goldfeld]{Ziv Goldfeld}
\address[Z. Goldfeld]{
School of Electrical and Computer Engineering, Cornell University.
}
\email{goldfeld@cornell.edu}

\author[K. Kato]{Kengo Kato}
\address[K. Kato]{
Department of Statistics and Data Science, Cornell University.
}
\email{kk976@cornell.edu}

\author[Z. Zhang]{Zhengxin Zhang}
\address[Z. Zhang]{
Xantium Group.
}
\email{zz658@cornell.edu}

\title[Probabilistic Representation and Convergence of GW Gradient Flows]{Probabilistic Representation and Convergence of Gromov-Wasserstein Gradient Flows}
\begin{abstract}
Wasserstein gradient flows are intimately connected with evolution partial differential equations and diffusion processes. We take the first step in developing such connections for inner product Gromov--Wasserstein (IGW) gradient flows by studying the IGW gradient flow of the relative entropy $\KL$ with respect to the standard Gaussian measure $\gamma$. We first show that $\KL$ fails to be $\lambda$-convex along generalized or modified generalized IGW geodesics for any $\lambda \in \R,$ and therefore falls outside the scope of the existing IGW gradient flow theory from \cite{zhang2024gradient}. We bridge this gap by establishing a suitable \emph{local} convexity estimate that enables the construction of the gradient flow and its extension to the infinite time horizon. We then obtain increasingly explicit representations of the resulting dynamics. Starting from a partial integro-differential equation, we derive a nonlinear Fokker--Planck equation and show that its second-moment dynamics decouple from the law as they satisfy an autonomous matrix ODE. This reduces the IGW dynamics to a linear, time-inhomogeneous Fokker--Planck equation, yielding a probabilistic representation as the time-marginal flow of a linear stochastic differential equation resembling the Ornstein--Uhlenbeck process. Finally, we study its asymptotic behavior by establishing exponential convergence of the flow to $\gamma$ in relative entropy. 
\end{abstract}
\keywords{Gromov-Wasserstein distance, gradient flow, Fokker--Planck equation, stochastic differential equation, exponential convergence}
\maketitle

\section{Introduction}

Since the seminal works of Jordan, Kinderlehrer, and Otto  \cite{jordan1998variational,otto2001geometry}, the interplay between gradient flows over the space of probability measures, evolution partial differential equations (PDEs), and associated stochastic differential equations (SDEs) has received considerable interest. The variational perspective provides a unified framework for the analysis of PDEs (see \cite{ambrosio2005gradient} and \cite{santambrogio2015optimal} for overviews) and has found numerous applications, such as numerical schemes for solving PDEs \cite{benamou2016augmented,peyre2015entropic}, training dynamics for neural networks \cite{mei2019mean,chizat2018global}, and sampling from probability distributions \cite{wibisono2018sampling,chewi26log}. The original construction in \cite{jordan1998variational}, now often referred to as the \textit{JKO scheme}, defines a sequence of probability measures by applying a proximal point method to a given energy functional with respect to (w.r.t.) the $2$-Wasserstein ($\sW_2$) metric, and then taking a suitable limit of the discrete sequence as the time step converges to zero.

Recent interest lies in extending the JKO scheme to discrepancy measures other than $\sW_2$, where the resulting gradient flows exhibit different dynamics that reflect the induced geometry \cite{duncan2023geometry,carrillo2026fisher,zhang2024gradient,rankin2026jko, hardion2025gradient}. Among these works, \cite{zhang2024gradient} provided the first construction of gradient flows in the inner product Gromov--Wasserstein (IGW) geometry, which compares distributions by minimizing distortions of pairwise inner products, rather than pointwise transport costs in a common ambient space, as in the Wasserstein distance. Formally, for $\mu,\nu\in\cP_2(\R^{d})$, the space of Borel probability measures on $\R^d$ with finite second moments, the IGW distance is defined as
\begin{equation}
\mathsf{IGW}(\mu,\nu)
\coloneqq
\left(
\inf_{\pi\in\Pi(\mu,\nu)}
\int_{(\R^d \times \R^d)^2}
\left| \langle x,x'\rangle - \langle y,y'\rangle
\right|^2 d\pi \otimes \pi(x,y,x',y')
\right)^{\frac12},
\label{eq:IGW_intro}
\end{equation}
which gives rise to a pseudometric on $\cP_2(\R^d)$ that is invariant under orthogonal transformations {\cite[Proposition 3.1]{zhang2024gradient}}. Here $\Pi(\mu,\nu)$ denotes the collection of couplings for $(\mu,\nu)$. IGW belongs to the broader family of Gromov--Wasserstein (GW) discrepancies, which are motivated by applications in fields such as computer graphics, linguistics, and biology where one may wish to identify distributions that agree up to transformations preserving their internal geometry \cite{solomon2016entropic,xu2019gromov,nitzan2019gene,alvarezmelis2018gromov}. The theory in \cite{zhang2024gradient} applies to rotationally invariant functionals satisfying a global $\lambda$-convexity condition along generalized or modified generalized IGW geodesics (see \cref{assumption:zz_main}~(iii) below) for some $\lambda \in \R$,  and characterized IGW gradient flows as distributional solutions to certain partial integro-differential equations (PIDEs). 

The framework of \cite{zhang2024gradient} leaves several fundamental questions open.
First, while the restriction to rotationally invariant functionals is natural from the point of view of adapting the JKO scheme to the IGW geometry (see \eqref{eq:minimizing_movement} for the definition of the IGW--JKO scheme), it remains unclear whether canonical functionals such as relative entropy satisfy $\lambda$-convexity along (modified) generalized IGW geodesics. This contrasts with the Wasserstein setting, where broad classes of functionals are known to satisfy $\lambda$-convexity along Wasserstein geodesics; see \cite[Chapter 9]{ambrosio2005gradient}. Second, the PIDE characterization in \cite{zhang2024gradient} is implicit, making the resulting dynamics difficult to interpret. In the Wasserstein setting, the gradient flow of relative entropy w.r.t. a log-concave target distribution is governed by a linear Fokker--Planck equation (FPE), which is the time-marginal evolution of the associated Langevin diffusion. Such PDE/SDE representations and their interplay are largely unexplored in the IGW case. Finally, the long-time behavior of IGW gradient flows, in particular, the convergence to equilibrium and quantitative convergence rates, was not addressed in \cite{zhang2024gradient}.

\subsection{Contributions}
In this work, we address these questions for the relative entropy functional, $\sH(\cdot\|\gamma)$, w.r.t. the standard Gaussian $\gamma=\cN(0,I)$, which is arguably one of the most fundamental rotationally invariant energy functionals (see \eqref{eq: rel entropy} below for the definition of the relative entropy). Our contributions are summarized as follows. 

First, we provide counterexamples showing that $\sH(\cdot \| \gamma)$ is \textit{not} $\lambda$-convex, for any $\lambda \in \R$, along either generalized or modified generalized IGW geodesics. As such, the general theory of \cite{zhang2024gradient} does not directly apply to our setting.
Nonetheless, we establish that the functional $\sH(\cdot \| \gamma)$ admits a suitable \emph{local} convexity estimate along modified generalized IGW geodesics, which allows us to adapt the proof of Theorem 4.1 in \cite{zhang2024gradient} to construct the IGW gradient flow for $\sH(\cdot \| \gamma)$. In addition, by establishing the uniform nondegeneracy of the second moment matrix flow, we extend the IGW gradient flow to the infinite time horizon, $(\rho_t)_{t \in [0,\infty)}$. 

Next, we provide analytic and probabilistic characterizations of the flow $(\rho_t)_{t \in [0,\infty)}$.  The limiting curve of probability measures constructed by the IGW--JKO scheme satisfies the continuity equation with a velocity field given through the action of the \textit{inverse} mobility operator  (see \cref{def:mob_op}) on the limiting subdifferential of $\KL$. Since the mobility operator is nonlocal, the resulting continuity equation for $(\rho_t)_{t\in[0,\infty)}$ initially takes the form of a PIDE. We evaluate the action of the inverse mobility operator in closed form, which gives rise to a \textit{nonlinear} FPE whose coefficients depend on the second moment matrix $\Sigma_{\rho_t}$ of $\rho_t$. The derivation requires some care, as little a priori regularity is available for $\rho_t$. The key structural observation is that the second moment matrix flow, $\Sigma_{\rho_t}$, satisfies an autonomous ordinary differential equation (ODE), which allows us to recast the nonlinear FPE as a \emph{linear, time-inhomogeneous} FPE. The latter linear FPE is realized as the time-marginal flow of a certain linear SDE that resembles the Ornstein--Uhlenbeck (OU) process, thereby providing a simple probabilistic representation of the IGW gradient flow for $\sH(\cdot \| \gamma)$. In comparison, the classical OU process generates the Wasserstein gradient flow of $\sH(\cdot \| \gamma)$.

Finally, we study the long-time behavior of the flow. Using the Gaussian log-Sobolev inequality together with uniform spectral bounds on $\Sigma_{\rho_t}$ along the flow, combined with techniques from \cite{ambrosio2007gradient}, 
we establish exponential convergence of the flow to $\gamma$ in relative entropy, which is reminiscent of the classical OU semigroup (cf. \cite{bakry2014analysis}) despite the difference in the underlying geometries.

\subsection{Related literature}

There is a vast literature on the JKO scheme, Wasserstein gradient flows, and their applications. We refer the reader to the textbooks and surveys \cite{ambrosio2005gradient,santambrogio2015optimal,santambrogio2017euclidean,peyre2019computational,chewi2025statistical,chewi26log}, and the references therein, for broader background and exposition. There is a recent line of literature exploring gradient flows over the space of probability measures endowed with discrepancy measures other than Wasserstein metrics. Apart from the IGW distance, they include the Stein distance \cite{duncan2023geometry}, the Fisher-Rao distance \cite{carrillo2026fisher}, the general transport cost \cite{rankin2026jko}, and the Sinkhorn divergence \cite{hardion2025gradient}.

The GW distance, first proposed by \cite{memoli2011gromov}, defines a metric on the space of all Polish metric measure spaces modulo measure-preserving isometries (see also \cite{sturm2012space}) and enables comparing and aligning heterogeneous and structured data sets. The specific variant of the IGW distance was considered by \cite{vayer2020contribution}. Subsequent work has studied GW and IGW from computational, statistical, and application-oriented perspectives. Along with the previously mentioned applications, GW-based methods have been applied to computational biology, including single-cell multi-omics integration \cite{demetci2022scot,cao2022manifold}, spatial reconstruction from single-cell transcriptomic data \cite{nitzan2019gene}, and the alignment and integration of spatial transcriptomics data \cite{zeira2022alignment,liu2023partial}. Recently, the IGW distance has also been leveraged for machine learning tasks such as machine translation \cite{alvarezmelis2018gromov}, latent correspondence learning \cite{le2022entropic}, LLM representation comparison and heterogeneous clustering of text data \cite{dandapanthula2025optimal}, among others. Motivated by these applications, several works have investigated the computational and statistical properties of GW and IGW; see, e.g., \cite{scetbon2022linear, zhang2024gromov,rioux2024entropic,kato2025convergence, karumanchi2025approximation,gong2026sliced}.

As previously mentioned, the study of GW gradient flows was initiated by \cite{zhang2024gradient}, on which the present paper builds. The relative entropy functional considered here, however, falls outside the scope of their general theory, necessitating a separate construction of the corresponding IGW gradient flow, which in turn provides the foundation for the subsequent PDE and SDE analysis.

\subsection{Organization}
The rest of the paper is organized as follows. \cref{sec: background} presents a brief overview of the necessary tools from variational analysis, the IGW distance, and its gradient flow framework. The section concludes with counterexamples showing that the $\sH(\cdot \| \gamma)$ functional fails to satisfy $\lambda$-convexity, for any $\lambda \in \R$, along (modified) generalized IGW geodesics. In \cref{sec: construction}, we establish the existence of the IGW gradient flow for $\KL$ and extend the construction to the infinite time horizon. \cref{sec: main} presents the main results, where we first derive the nonlinear and linear FPEs for the IGW gradient flow, and then establish exponential convergence in relative entropy.
Sections \ref{sec: proof construction} and \ref{sec: proof main} contain the proofs of the results in Sections \ref{sec: construction} and \ref{sec: main}, respectively. \cref{sec: discussion} provides concluding remarks and discusses future research directions. Finally, the appendices contain additional auxiliary results and their proofs.

\section{Background and Preliminaries}\label{sec: background}
\subsection{Notation} 
We use $\| \cdot \|$ and $\langle \cdot, \cdot \rangle$ to denote the standard Euclidean norm and inner product for vectors, respectively. For any symmetric matrix $A$, let $\lambda_{\min}(A)$ and $\lambda_{\max}(A)$ denote its minimum and maximum eigenvalues, respectively. For any positive semidefinite (PSD) matrix $\Sigma \in \R^{d \times d}$, we use $\gamma_{\Sigma}$ to denote the Gaussian distribution with mean zero and covariance matrix $\Sigma$. When $\Sigma=I$, we write $\gamma = \gamma_{I}$.

We denote by $\cP(\R^d)$ the space of Borel probability measures on
$\R^d$, and by
$
\cP_2(\R^d)
\coloneqq
\left\{
    \mu\in\cP(\R^d)
    :
    \int_{\R^d}\lVert x\rVert^2\,d\mu(x)<\infty
\right\}
$
the space of probability measures with finite second moments. We set
$
\cP_2^{\mathrm{ac}}(\R^d)
\coloneqq
\{ \mu \in \cP_2(\R^d) : \mu \ll \cL^d \},
$
where $\cL^d$ is the Lebesgue measure on $\R^d$. 
Whenever $\mu \ll \cL^d$, we identify $\mu$ with its Lebesgue density $\frac{d\mu}{d\cL^d}$. 
For $\mu,\nu\in\cP(\R^d)$,
we use $\Pi(\mu,\nu)$ to denote the set of
couplings for $\mu$ and $\nu$, namely, the probability measures on
$\R^d\times\R^d$ whose first and second marginals are $\mu$ and $\nu$,
respectively. More generally, for $\mu_1,\dots,\mu_k \in \cP(\R^d)$, $\Pi(\mu_1,\ldots,\mu_k)$ denotes the set of
probability measures on $(\R^d)^{k}$ whose $i$th marginal  agrees with $\mu_i$. For $\mu,\nu\in\cP_2(\R^d)$, their $2$-Wasserstein
distance is defined by
\[
\W_2(\mu,\nu)
\coloneqq
\left(
    \inf_{\pi\in\Pi(\mu,\nu)}
    \int_{\R^d\times\R^d}
    \lVert x-y\rVert^2\,d\pi(x,y)
\right)^{1/2}.
\]
For $\mu\in\cP_2^{\mathrm{ac}}(\R^d)$ and
$\nu\in\cP_2(\R^d)$, we denote by $T^{\mu\to\nu}\colon\R^d\to\R^d$ the Brenier map transporting $\mu$ to $\nu$ \cite{brenier1991polar}.  For $\mu,\nu \in \cP(\R^d)$, the relative entropy $\sH(\mu\|\nu)$ is defined by
\begin{equation}
\sH(\mu\|\nu)
\coloneqq
\begin{cases}
\int_{\R^d}
\log\left(\frac{d\mu}{d\nu}\right)
\,d\mu,
& \text{if }\mu\ll\nu,\\[1em]
+\infty,
& \text{otherwise}.
\end{cases}
\label{eq: rel entropy}
\end{equation}
For $\mu \in \cP(\R^d)$, let $L^2(\mu;\R^d)$ denote the (real) Hilbert space of Borel vector fields $v: \R^d \to \R^d$ such that $\| v \|_{L^2(\mu; 
R^d)}^2 = \int \|v\|^2 \, d\mu<\infty$, endowed with the inner product $\langle v,w \rangle_{L^2(\mu;\R^d)} = \int_{\R^d} \langle v,w \rangle \, d\mu$.
We use $\Sigma_\mu$ to denote the second moment matrix of $\mu$, i.e., $\Sigma_\mu = \int xx^\intercal \, d\mu(x)$.
In addition, we denote $M_2(\mu) = \int \|x\|^2 \, d\mu(x) = \tr (\Sigma_\mu)$. 

We use standard function space notations ($C_c^\infty, W^{1,1}, W^{1,1}_{\mathrm{loc}},$ etc.); see \cite{ambrosio2005gradient}. For an absolutely continuous map $t \in [0,T] \mapsto A_t \in \R^{d \times d}$, we write $\dot{A}_t$ for its derivative with respect to $t$, whenever it exists.
Finally, we use $\lesssim_\xi$ to denote inequalities up to constants that only depend on $\xi$; the subscript is dropped when the constant is universal.

\subsection{Subdifferential calculus}
We recall some basic definitions from subdifferential calculus in the space of probability measures.

\begin{definition}[Fr\'echet subdifferential]\label{definition:frechet_subdifferential}
    Given a proper and lower semicontinuous functional $\sF:\cP_2(\RR^d)\to\RR\cup\{+\infty\}$ with $\dom(\sF) \coloneqq\{\mu\in\cP_2(\RR^d):\sF(\mu)<+\infty\} \subset\cP_2^{\ac}(\RR^d)$, its \textit{Fr\'echet subdifferential} $\partial\sF(\mu)$ is the collection of all $\xi\in L^2(\mu;\RR^d)$ such that
    \begin{align*}
        \sF(T_{\#}\mu)-\sF(\mu)\geq\int_{\R^d} \llangle\xi(x),T(x)-x\rrangle d\mu(x) +o(\|T-\id\|_{L^2(\mu;\RR^d)})
    \end{align*}
    for any map $T \in L^2(\mu;\R^d)$. Any element $\xi\in\partial\sF(\mu)$ is called a \textit{strong subdifferential}.
\end{definition}

We shall make heavy use of the \textit{limiting subdifferential}, which we define next.  Denote $\dom(\partial \sF) = \{\mu \in \cP_2(\R^d): \partial \sF(\mu) \neq \emptyset\}$ for the domain of the Fr\'echet subdifferential.

\begin{definition}[Limiting subdifferential]
For $\mu \in \dom(\sF)$, we say that a vector field
$\xi \in L^2(\mu;\R^d)$ belongs to the \textit{limiting subdifferential}
$\partial_{\ell}\sF(\mu)$ of $\sF$ at $\mu$ if there exist two
sequences
$\mu_k \in \dom(\partial\sF)$ and 
$\xi_k \in \partial\sF(\mu_k)$ such that $\mu_k \to \mu$ weakly in $\cP(\R^d)$, $\xi_k \to \xi$ weakly, in the sense that $\lim_{k} \int_{\R^d}\langle g,\xi_k \rangle \, d\mu_k = \int_{\R^d} \langle g,\xi \rangle \,d\mu$ for all $g \in C_c^\infty(\R^d;\R^d)$, and 
\[
\sup_k\left(\sF(\mu_k)+\int_{\RR^d} \left(\|x\|^2 + \|\xi_k(x)\|^2 \right)\,d\mu_k(x)\right) <+\infty.
\]
\end{definition}

\subsection{Inner product Gromov--Wasserstein distance}\label{subsec:IGW}

We consider the GW distance with inner product cost (abbreviated IGW) between  two Euclidean metric measure spaces $(\RR^{d},\|\cdot\|,\mu)$ and $(\RR^{d},\|\cdot\|,\nu)$ for $\mu,\nu \in \cP_2(\R^d)$, defined by \eqref{eq:IGW_intro}. 
The infimum of the right-hand side of \eqref{eq:IGW_intro} is achieved for some $\pi \in \Pi(\mu,\nu)$ (see Lemma 1.2 in \cite{sturm2012space}), which we call an optimal IGW coupling. 
By definition, the IGW distance is rotationally invariant, in the sense that, for any orthogonal matrix $O$, $\IGW(O_{\#}\mu,\nu)=\IGW(\mu,O_{\#}\nu) = \IGW(\mu,\nu)$. As such, it is only a pseudometric on $\cP_2(\R^d)$, and $\mathsf{IGW}(\mu,\nu) = 0$ if and only if there exists a mapping $T: \supp (\mu) \to \supp(\nu)$ that preserves the inner product $\mu \otimes \mu$-a.s. such that $\nu = T_{\#}\mu$, where $\supp(\mu)$ denotes the support of $\mu$; see Proposition 3.1 in~\cite{zhang2024gradient}.

\subsection{IGW gradient flows}\label{subsec:IGW flows}

We review the main result from \cite{zhang2024gradient} on the construction of IGW gradient flows, which builds on adapting the JKO scheme to the IGW distance. 
The IGW--JKO schemes considered in \cite{zhang2024gradient} require a certain subset of the orthogonal group in $\RR^d$, using which cross-covariance matrices induced by optimal IGW couplings can be symmetrized. 

\begin{definition}[Cross-covariance PSD transform]\label{def:PSD_transform}
    Fix $\mu,\nu\in\cP_2(\RR^d)$ and let $\pi^\star\in\Pi(\mu,\nu)$ be an optimal IGW coupling. Consider the singular value decomposition of the cross-covariance matrix $\int xy^\intercal d\pi^\star(x,y) = P\Lambda Q^\intercal$, where $P, Q$ are orthogonal matrices and  $\Lambda$ is a diagonal matrix, and let $O=PQ^\intercal$. Define $\cO_{\mu,\nu}$ to be the collection of such orthogonal matrices $O$, from any optimal IGW coupling. 
\end{definition}

Next, we introduce the IGW--JKO scheme. Consider a functional $\sF$ that is rotationally invariant, weakly lower semicontinuous, and regular in the sense of Definition 10.1.4 in \cite{ambrosio2005gradient}, with $\dom(\sF) \subset \cP_2^{\ac}(\R^d)$. For given $\delta > 0,\; n\in\NN$, and $\rho_0 \in \cP_2(\R^d)$ with $\sF(\rho_0) < \infty$ and nonsingular $\Sigma_{\rho_0}$, the IGW--JKO scheme with step size $\tau = \frac{\delta}{n}$ generates a sequence of probability measures $\{\rho_i^\tau\}_{i=0}^n\subset\cP_2(\RR^d)$ with $\rho_0^\tau=\rho_0$ defined via 
\begin{equation}
\begin{split}
    &\tilde\rho_{i+1}^\tau \in \argmin_{\rho\in\cP_2(\RR^d)} \sF(\rho) + \frac{1}{2\tau}\IGW^2(\rho,\rho_i^\tau),\\
    &\rho_{i+1}^\tau = O_{\#}\tilde{\rho}_{i+1}^\tau, \ \ O\in\cO_{\rho_i^\tau,\tilde{\rho}_{i+1}^\tau}.
\end{split}
\quad i =0,\dots,n-1,
\label{eq:minimizing_movement}
\end{equation}
This gives rise to the piecewise constant curve of probability measures $(\bar{\rho}^n_t)_{t \in [0,\delta]}$,
\[
\bar{\rho}^n_t=\rho_i^\tau, \quad t\in ((i-1)\tau,i\tau], \quad i=1,\ldots,n
\]
with $\bar \rho^n_0 = \rho_0.$ The rotation step in (\ref{eq:minimizing_movement}) does not feature in the JKO scheme for the Wasserstein distance. 
Formally, the rotation step aligns $\rho_{i+1}^\tau$ with $\rho_{i}^\tau$ in $\W_2$, resulting in a sequence $\{\rho_i^\tau\}_{i=0}^n$ that complies with the natural Wasserstein gradient flow structure associated with the continuity equation. More precisely, the rotation step is essential to first ensure that the discrete-time velocity field is induced by a Gromov--Monge map (see \cref{lem:gromov_monge} below) and to then identify the limiting velocity field.

The original analysis in \cite{zhang2024gradient} focused on functionals that are $\lambda$-convex either along \textit{generalized IGW geodesics} \cite[Definition 3.2]{zhang2024gradient}, defined analogously to generalized geodesics in $\W_2$ \cite[Definition 4.3]{santambrogio2017euclidean}, or along \textit{modified generalized IGW geodesics}. For our application to $\sH(\cdot \| \gamma)$, we will work with modified generalized geodesics, whose definition we recall next, starting from the following lemma.  

\begin{lemma}[Gromov--Monge map {\cite[Lemma 2.2]{zhang2024gradient}}]\label{lem:gromov_monge}
    For $\mu\in\cP_2^{\mathrm{ac}}(\RR^d)$ and $\nu\in\cP_2(\RR^d)$, let $\pi^\star\in\Pi(\mu,\nu)$ be an optimal IGW coupling such that $A^\star=\frac{1}{2}\int xy^\intercal d\pi^\star(x,y)$ is nonsingular. Then $T^\star\coloneqq (8A^\star)^{-1}T^{\mu\to (8A^\star)_{\#}\nu}$ is a Gromov--Monge map for $\IGW(\mu,\nu)$, i.e., we have $\pi^\star = (\id,T^\star)_{\#}\mu$ and $A^\star = \frac{1}{2}\int x\, T^\star(x)^\intercal d\mu(x)$.
\end{lemma}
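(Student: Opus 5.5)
The argument reduces the IGW problem to a linear optimal transport problem by linearizing the quartic cost around the optimal coupling, and then invokes Brenier's theorem. Expanding the square in \eqref{eq:IGW_intro}, the terms $\int\!\!\int \langle x,x'\rangle^2\,d\mu\,d\mu$ and $\int\!\!\int \langle y,y'\rangle^2\,d\nu\,d\nu$ do not depend on the coupling. The only coupling-dependent term is
\[
-2\int\!\!\int \langle x,x'\rangle\langle y,y'\rangle\, d\pi\, d\pi = -2\,\bigl\|\textstyle\int xy^\intercal d\pi\bigr\|_{\F}^2 .
\]
So $\pi^\star$ maximizes the convex functional $\pi\mapsto \|\int xy^\intercal d\pi\|_\F^2$ over $\Pi(\mu,\nu)$. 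Convexity gives, for every $\pi\in\Pi(\mu,\nu)$,
\[
\|C_\pi\|_\F^2\ \ge\ \|C_{\pi^\star}\|_\F^2 + 2\langle C_{\pi^\star}, C_\pi - C_{\pi^\star}\rangle_\F,
\qquad C_\pi \coloneqq \int xy^\intercal\,d\pi .
\]
Combined with the optimality $\|C_\pi\|_\F^2\le \|C_{\pi^\star}\|_\F^2$, this yields $\langle C_{\pi^\star}, C_\pi\rangle_\F \le \langle C_{\pi^\star},C_{\pi^\star}\rangle_\F$. Hence $\pi^\star$ also maximizes the linear functional
\[
\pi\mapsto \int \langle x, 2A^\star y\rangle\, d\pi(x,y), \qquad 2A^\star = C_{\pi^\star}.
\]

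Next I rewrite this linear problem as a quadratic transport problem. Let $\Phi(y) = 8A^\star y$ and $\pi' = (\id,\Phi)_{\#}\pi\in\Pi(\mu,\Phi_{\#}\nu)$. Then $\int\langle x,2A^\star y\rangle d\pi = \frac14\int\langle x,z\rangle d\pi'(x,z)$. Because $A^\star$ is nonsingular, $\Phi$ is a bijection, so $\pi\mapsto \pi'$ is a bijection between $\Pi(\mu,\nu)$ and $\Pi(\mu,(8A^\star)_{\#}\nu)$. Therefore $(\id,\Phi)_{\#}\pi^\star$ maximizes $\int\langle x,z\rangle\,d\pi'$, which means it is an optimal $\W_2$ coupling between $\mu$ and $(8A^\star)_{\#}\nu$. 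Both measures have finite second moments.

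Since $\mu\in\cP_2^{\ac}(\R^d)$, Brenier's theorem says the $\W_2$-optimal coupling is unique and equals $(\id,T^{\mu\to(8A^\star)_\#\nu})_{\#}\mu$. Applying $\Phi^{-1}$ to the second coordinate gives
\[
\pi^\star = \bigl(\id,(8A^\star)^{-1}T^{\mu\to(8A^\star)_\#\nu}\bigr)_{\#}\mu = (\id,T^\star)_\#\mu .
\]
The identity $A^\star = \frac12\int x\,T^\star(x)^\intercal d\mu$ then follows from the definition of $A^\star$ by substituting this representation of $\pi^\star$.

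The main obstacle is the first-order optimality step: passing from optimality for the quadratic-in-$\pi$ objective to optimality for its linearization. The convexity inequality above handles it cleanly. The remaining care is to check that nonsingularity of $A^\star$ makes the pushforward correspondence bijective, and that the $\W_2$ problem has finite cost.
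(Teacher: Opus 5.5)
Your proof is correct and is essentially the argument behind the cited \cite[Lemma 2.2]{zhang2024gradient}, which the present paper imports without reproving. Your tangent-plane inequality for $\|\cdot\|_{\F}^2$ is the same fact as the variational representation $-2\|C_\pi\|_{\F}^2=\min_{A}\{8\|A\|_{\F}^2-8\langle A,C_\pi\rangle_{\F}\}$ used there (this is where the factor $8$ comes from): it makes $A^\star$ the optimal outer variable and $\pi^\star$ optimal for the linear transport problem with cost $-8x^\intercal A^\star y$, after which the same linear change of variables and Brenier's theorem conclude; the only cosmetic fix is to write $(\id\times\Phi)_{\#}\pi$ instead of $(\id,\Phi)_{\#}\pi$ for the pushforward of a coupling.
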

The definition of modified generalized IGW geodesics is given below. 
\begin{definition}[Modified generalized IGW geodesics  \cite{zhang2024gradient}]\label{def:MGG}
    For  $\mu_0,\mu_1,\mu_2\in\cP^{\ac}_2(\RR^d)$, let $\pi_{01}$ and $\pi_{02}$ be optimal IGW couplings for $(\mu_0,\mu_1)$ and $(\mu_0,\mu_2)$, respectively. Suppose that $\mu_1, \mu_2$ are already rotated w.r.t. $\mu_0$ in the sense of \cref{def:PSD_transform}, so that both optimal couplings have symmetric positive definite (uncentered) cross-covariance matrices $A_1 \coloneqq \frac 12 \int xy^\intercal d\pi_{01}(x,y)$ and $A_2 \coloneqq \frac 12 \int xz^\intercal d\pi_{02}(x,z)$. Consider
    \begin{equation}
    \label{eq: map}
    T_t \coloneqq \Bigl((1-t)(8A_1)^{-1}+t(8A_2)^{-1}\Bigr)
    \Bigl((1-t)\nabla \phi_1+t\nabla \phi_2\Bigr),
    \end{equation}
where $\phi_1$ and $\phi_2$ are convex functions such that $T_i\coloneqq (8A_i)^{-1}\nabla\phi_i$ is a Gromov--Monge map from $\mu_0$ to $\mu_i$, for each $i=1,2$ (cf. \cref{lem:gromov_monge}). 
The \textit{modified generalized IGW geodesic} between $\mu_1$ and $\mu_2$ w.r.t. $\mu_0$ is defined by $\nu_t = (T_t)_{\#} \mu_0$ for $t\in[0,1]$.
\end{definition}

Describing the limiting dynamics arising from the IGW--JKO scheme requires the following \textit{mobility operator} from \cite{zhang2024gradient}.

\begin{definition}[Mobility Operator \cite{zhang2024gradient}]\label{def:mob_op}
For any $\mu\in\cP_2(\RR^d)$, the \textit{mobility operator} $\cL_{\mu}:L^2(\mu;\RR^d)\to L^2(\mu;\RR^d)$ is defined by
\begin{equation}
    \cL_{\mu}[v](x) \coloneqq 2\Sigma_{\mu}  v(x) + 2 \left ( \int_{\RR^d} y  v(y)^\intercal \, d\mu(y)\right) x , \ v \in L^2(\mu;\R^d). 
    \label{eq:operator}
\end{equation}
\end{definition}

We present some properties of the mobility operator that will be used in the sequel. 
\begin{proposition}[Properties of $\cL_{\mu}$]\label{prop:fredholm}
The following hold.
\begin{enumerate}
    \item[(i)] The operator $\cL_{\mu}$ is linear, bounded, and self-adjoint with invariant subspace 
\[
    \cI_{\mu}
    \coloneqq
    \left\{
    v \in L^2(\mu;\RR^d)
    :
    \int_{\R^d} x v(x)^\intercal \,d\mu(x)
    \text{ is symmetric}
    \right\},
    \]
    which is closed in $L^2(\mu; \R^d)$. 
    If $\Sigma_\mu$ is nonsingular, then the kernel of $\cL_{\mu}$ is 
      \[ 
    \mathrm{ker}(\cL_\mu) = \left\{
    w \in L^2(\mu;\RR^d): w(x) = Bx \text{ for $\mu$-a.e. } x, \text{ with } B \in \RR^{d \times d}
    \text{ skew-symmetric}
    \right\}.
    \] 
    \item[(ii)] If $\Sigma_\mu$ is nonsingular, then the restricted operator $\cL_{\mu}|_{\cI_{\mu}}$ is a bijection onto $\cI_{\mu}$ with bounded inverse
\begin{equation}
(\cL_{\mu}|_{\cI_{\mu}})^{-1}[w](x) =\frac{1}{2} \Sigma_{\mu}^{-1} w(x)-\Sigma_\mu^{-1} B_{w} x, \quad w \in \cI_\mu, 
\label{eq: inverse}
\end{equation}
where $B_w \in \R^{d \times d}$ is the unique (and necessarily symmetric) solution to the Sylvester equation $\Sigma_\mu B + B\Sigma_\mu = \frac{1}{2}\int_{\R^d} yw(y)^\intercal \, d\mu$.  
\end{enumerate}
\end{proposition}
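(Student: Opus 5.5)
Throughout, write $M_v \coloneqq \int_{\R^d} y\, v(y)^\intercal \, d\mu(y)$, so that $\cL_\mu[v](x) = 2\Sigma_\mu v(x) + 2M_v x$.

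\textbf{Part (i).} Linearity is immediate. Boundedness follows from Cauchy--Schwarz: $\|M_v\| \le M_2(\mu)^{1/2}\|v\|_{L^2(\mu;\R^d)}$ and $\|\Sigma_\mu\|_{\op} \le M_2(\mu)$. Hence
\[
\|\cL_\mu[v]\|_{L^2(\mu;\R^d)} \le 4M_2(\mu)\,\|v\|_{L^2(\mu;\R^d)}.
\]
For self-adjointness, compute
\[
\langle \cL_\mu[v], w\rangle_{L^2(\mu;\R^d)} = 2\int \langle \Sigma_\mu v, w\rangle \, d\mu + 2\,\tr(M_v M_w).
\]
The first term is symmetric in $(v,w)$ because $\Sigma_\mu$ is symmetric. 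The second is symmetric because $\tr(M_vM_w)=\tr(M_wM_v)$, using $\int\langle M_v x,w(x)\rangle\,d\mu = \tr(M_v \int x w^\intercal d\mu)$ with a check of transpose conventions.

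Invariance of $\cI_\mu$ follows from
\[
\int x\,\cL_\mu[v](x)^\intercal \, d\mu = 2M_v\Sigma_\mu + 2\Sigma_\mu M_v^\intercal .
\]
If $M_v$ is symmetric, this equals $2(M_v\Sigma_\mu + \Sigma_\mu M_v)$, which is symmetric. Closedness holds because $v\mapsto M_v$ is continuous and the symmetric matrices form a closed set.

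\textbf{Kernel.} Let $\cL_\mu[v]=0$ with $\Sigma_\mu$ nonsingular. Then $v(x) = -\Sigma_\mu^{-1}M_v x$ $\mu$-a.e., so $v(x)=Cx$ with $C = -\Sigma_\mu^{-1}M_v$. Plugging back gives $M_v=\Sigma_\mu C^\intercal$, and the kernel equation becomes $\Sigma_\mu C + \Sigma_\mu C^\intercal = 0$. Since $\Sigma_\mu$ is invertible, $C+C^\intercal=0$, i.e., $C$ is skew. Conversely, for $v(x)=Bx$ with $B$ skew, $M_v = \Sigma_\mu B^\intercal = -\Sigma_\mu B$, so $\cL_\mu[v] = 2\Sigma_\mu Bx - 2\Sigma_\mu Bx = 0$.

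\textbf{Part (ii).} Given $w\in\cI_\mu$, solve $2\Sigma_\mu v + 2M_vx = w$. Any solution has the form $v = \tfrac12\Sigma_\mu^{-1}w - \Sigma_\mu^{-1}M_v x$, so the task reduces to the matrix unknown $B=M_v$. Taking $\int x(\cdot)^\intercal d\mu$ of this ansatz yields the consistency equation
\[
B = \tfrac12 M_w\Sigma_\mu^{-1} - B^\intercal .
\]
Set $\tilde v = \tfrac12\Sigma_\mu^{-1}w - \Sigma_\mu^{-1}Bx$ with $B$ the unique solution of the Sylvester equation $\Sigma_\mu B + B\Sigma_\mu = \tfrac12 M_w$. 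Existence and uniqueness hold because $\Sigma_\mu\succ0$, so the spectra of $\Sigma_\mu$ and $-\Sigma_\mu$ are disjoint. Symmetry of $B$ follows by transposing the equation and using uniqueness, since $M_w$ is symmetric.

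Then compute
\[
M_{\tilde v} = \tfrac12 M_w\Sigma_\mu^{-1} - \Sigma_\mu B\Sigma_\mu^{-1} = B\Sigma_\mu\Sigma_\mu^{-1} = B,
\]
where the middle step uses $\tfrac12 M_w = \Sigma_\mu B + B\Sigma_\mu$. So $M_{\tilde v}=B$ is symmetric, giving $\tilde v\in\cI_\mu$, and $\cL_\mu[\tilde v] = w - 2Bx + 2Bx = w$. This proves surjectivity.

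Injectivity on $\cI_\mu$: a kernel element $v(x)=Cx$ with $C$ skew has $M_v=\Sigma_\mu C^\intercal$. If $v\in\cI_\mu$, this is symmetric, so $\Sigma_\mu C^\intercal = C\Sigma_\mu$. Combined with $C^\intercal=-C$ this gives $\Sigma_\mu C + C\Sigma_\mu=0$. Uniqueness of the Sylvester solution then forces $C=0$.

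Boundedness of the inverse follows either from the explicit formula, since $\|B_w\|$ is controlled by $\|M_w\|$, or from the bounded inverse theorem.

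The main obstacle I anticipate is the bookkeeping of transposes in $M_v$ and in the expression $\int x\,\cL_\mu[v]^\intercal\,d\mu$; getting one of these wrong would break the symmetry argument, so I will verify them carefully.
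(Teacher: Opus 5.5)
Your proof is correct and follows essentially the same route as the paper: the kernel by substitution, surjectivity by directly checking the Sylvester-based preimage, injectivity from $\ker(\cL_\mu)\cap\cI_\mu=\{0\}$ (which you actually prove), and boundedness from the formula. The only difference is that you get existence, uniqueness, and symmetry of the Sylvester solution from spectral disjointness and transposition, rather than from the integral representation $\int_0^\infty e^{-t\Sigma_\mu}(\cdot)e^{-t\Sigma_\mu}\,dt$.

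One harmless slip: your heuristic consistency equation should read $B = \tfrac12 M_w\Sigma_\mu^{-1} - \Sigma_\mu B^\intercal\Sigma_\mu^{-1}$ (equivalently $B\Sigma_\mu + \Sigma_\mu B^\intercal = \tfrac12 M_w$), not $B = \tfrac12 M_w\Sigma_\mu^{-1} - B^\intercal$. You never use it, and your later computation of $M_{\tilde v}$ is correct.
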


These properties are essentially proved in Section 10.2 of \cite{zhang2024gradient}. For the sake of completeness, we provide a short proof of the above proposition in \cref{sec: app A}.

Given these preparations, we recall the assumptions required for the main result (Theorem 4.1) in \cite{zhang2024gradient}.
\begin{assumption}\label{assumption:zz_main}
The initial distribution $\mu_0\in\dom(\sF)\subset\cP_2(\RR^d)$ has a nonsingular second moment matrix $\Sigma_{\mu_0}$ and the objective functional $\sF:\cP_2(\RR^d)\to\RR\cup\{+\infty\}$~is:
\begin{enumerate}
    \item[(i)] rotationally invariant, lower bounded, and weakly lower semicontinuous;
    \item[(ii)] regular in the sense of \cite[Definition 10.1.4]{ambrosio2005gradient} and has $\dom(\sF)\subset\cP_2^{\mathrm{ac}}(\RR^d)$.
    \item[(iii)] $\lambda$-convex along any generalized (or modified generalized) IGW geodesics with some $\lambda\in\RR$, i.e., for $\mu_0,\mu_1,\mu_2 \in \dom(\sF)$ with the glued joint distribution $\pi\in\Pi(\mu_0,\mu_1,\mu_2)$ and a generalized (or modified generalized) geodesic $\nu_t$ from $\mu_1$ to $\mu_2$  w.r.t. $\mu_0$, we~have
    \begin{equation}
    \begin{split}
    &\sF(\nu_t)\leq (1-t)\sF(\mu_1)+t\sF(\mu_2) \\
    &\quad -\lambda t(1-t) \int_{(\R^d \times \R^d)^2} \left|\llangle y,y'\rrangle-\llangle z,z'\rrangle\right|^2d\pi_{1,2}\otimes\pi_{1,2}(y,z,y',z'), \quad \forall t\in[0,1],
    \end{split}
    \label{eq:lambda_convexity}
    \end{equation}
    where $\pi_{1,2}$ denotes the $(y,z)$-marginal of $\pi$, i.e., $\pi_{1,2}  = \big ((x,y,z) \mapsto (y,z)\big)_{\#} \pi$.
\end{enumerate}
\end{assumption}

 The following is a slight variant of Theorem 4.1 in \cite{zhang2024gradient}.
\begin{theorem}[IGW gradient flow \cite{zhang2024gradient}]\label{thm:zz_main1}
Let $\rho_0\in\cP_2(\RR^d)$ and let
$\sF:\cP_2(\RR^d)\to(-\infty,+\infty]$ satisfy
Assumption \ref{assumption:zz_main} (i) and (ii). Set $\delta_{\rho_0} \coloneqq
    \frac{\left(1-1/\sqrt{2}\right)\lambda_{\min}(\Sigma_{\rho_0})}
    {2^{5/4}(\sF(\rho_0)-\sF_\star)},$ with $\sF_\star = \inf \sF$.
For any $\delta\in(0,\delta_{\rho_0})$, the following hold:
\begin{enumerate}
\item[(i)]
The piecewise-constant
IGW--JKO interpolant
$\bar\rho^n:[0,\delta]\to\cP_2(\RR^d)$, with time step
$\tau=\delta/n$, converges pointwise weakly to a $\W_2$-continuous curve
$\rho:[0,\delta]\to\dom(\sF)$, and the limiting curve satisfies the continuity equation
\begin{equation}\label{eq:zz_PIDE}
    \partial_t\rho_t+\nabla\cdot(\rho_t v_t)=0,
\end{equation}
in the distributional sense on $\RR^d\times(0,\delta),$ i.e.
\[  \int_0^\delta\int_{\RR^d} \partial_t g(t,x) \, d \rho_t(x)dt=-\int_0^\delta\int_{\RR^d} \llangle\nabla_x g(t,x), v_t(x)\rrangle d  \rho_t(x)dt,  \ \  \forall g\in C_c^\infty((0,\delta)\times\RR^d),
\]
for some velocity field $v_t \in L^2(\rho_t;\R^d).$
\item[(ii)] If, in addition, Assumption \ref{assumption:zz_main}~(iii) holds, the aforementioned convergence can be lifted to
uniform $\W_2$-convergence along a subsequence, and the corresponding velocity field $v_t$ satisfies
\begin{equation}
v_t\in-(\cL_{\rho_t}|_{\cI_{\rho_t}})^{-1}[\partial_{\ell}\sF(\rho_t)] \quad \text{$\rho_t$-a.s. for a.e. $t \in (0,\delta)$}.
\label{eq: differential inclusion}
\end{equation}
\end{enumerate}
\end{theorem}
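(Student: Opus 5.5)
The plan is to follow the minimizing-movement argument of \cite[Theorem 4.1]{zhang2024gradient} and record only the points where the present statement differs: the explicit short-time bound $\delta_{\rho_0}$, and the split between part (i), which uses only (i)--(ii) of \cref{assumption:zz_main}, and part (ii), which also uses convexity.

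\textbf{Step 1: a priori estimates.} Testing the minimization in \eqref{eq:minimizing_movement} against the previous iterate gives the energy inequality
\[
\sF(\rho^\tau_{i+1})+\frac{1}{2\tau}\IGW^2(\rho^\tau_{i+1},\rho^\tau_i)\le \sF(\rho^\tau_i).
\]
Here we use that $\sF$ and $\IGW$ are rotationally invariant, so the rotation step does not change either quantity. Summing over $i$ yields
\[
\sum_i \frac{\IGW^2(\rho^\tau_{i+1},\rho^\tau_i)}{\tau}\le 2\bigl(\sF(\rho_0)-\sF_\star\bigr).
\]
Since $\IGW^2(\mu,\nu)\ge \|\Sigma_\mu-\Sigma_\nu\|_\F^2$, this controls the variation of the second-moment matrices. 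The Cauchy--Schwarz inequality then gives
\[
\|\Sigma_{\rho_k^\tau}-\Sigma_{\rho_0}\|\le \sqrt{2k\tau(\sF(\rho_0)-\sF_\star)}.
\]
Quantities such as $\lambda_{\min}(\Sigma)$ and $M_2$ then remain controlled on $[0,\delta]$. The threshold $\delta<\delta_{\rho_0}$ is chosen exactly so that $\lambda_{\min}(\Sigma_{\rho_k^\tau})\ge \lambda_{\min}(\Sigma_{\rho_0})/\sqrt2$ stays bounded away from zero; the constants $1-1/\sqrt2$ and $2^{5/4}$ come from this bookkeeping.

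\textbf{Step 2: convergence and the continuity equation.} Because the second moments are nonsingular, each optimal cross-covariance $A$ is nonsingular, and after the rotation it is symmetric positive definite. \cref{lem:gromov_monge} then provides Gromov--Monge maps $T^\tau_i$ with $\rho^\tau_{i+1}=(T^\tau_i)_\#\rho^\tau_i$, since the rotation makes the map $\W_2$-aligned. The discrete velocity is $v^\tau=(T^\tau_i-\id)/\tau$. Its $L^2$ norm is bounded by the $\IGW$ increments through a nondegeneracy estimate $\|T-\id\|_{L^2}^2\lesssim \IGW^2/\lambda_{\min}(\Sigma)$, which uses the Step 1 bound. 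This gives a uniform bound on $\int_0^\delta\|v^\tau_t\|^2_{L^2(\bar\rho_t)}dt$ and an approximate $\W_2$-H\"older equicontinuity of $\bar\rho^n$. A refined Arzel\`a--Ascoli argument (AGS Prop.~3.3.1) then gives pointwise weak convergence to a $\W_2$-continuous limit. Passing to the limit in the discrete continuity equation via the momenta $\bar\rho v^\tau\rightharpoonup \rho v$ (AGS Thm.~5.4.4 / 8.3.1) yields \eqref{eq:zz_PIDE}. This proves (i).

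\textbf{Step 3: identification of the velocity, part (ii).} The first-order optimality condition for the JKO step, combined with regularity of $\sF$, gives a strong subdifferential element of the form $-\cL_{\rho_{i+1}}[v^\tau]$ plus an error of order $\tau$. This uses the first variation of $\IGW^2$ along maps, which produces the mobility operator. Under $\lambda$-convexity one upgrades to uniform $\W_2$ convergence and to strong convergence of $\Sigma$, and hence of $\cL_{\bar\rho}$. Then the limiting-subdifferential definition together with the closure of $\cI$ and the bounded inverse from \cref{prop:fredholm}(ii) gives \eqref{eq: differential inclusion}.

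I expect the main obstacle to be this last identification step. The operator $\cL$ depends on the moving measure, and weak convergence of the products $\cL_{\bar\rho}[v]$ requires the strong convergence of $\Sigma_{\bar\rho}$ and of the cross-moments, which is where convexity is genuinely needed.
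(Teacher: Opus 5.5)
There is a genuine gap in Step 2, at the sentence ``because the second moments are nonsingular, each optimal cross-covariance $A$ is nonsingular.'' This implication is false.

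In $\R^2$, let $\mu$ be uniform on $\{\pm e_1,\pm\epsilon e_2\}$ and let $\nu=\tfrac14\delta_{(1,\epsilon)}+\tfrac14\delta_{(-1,\epsilon)}+\tfrac12\delta_{(0,-\epsilon)}$. Then $\Sigma_\mu=\mathrm{diag}(\tfrac12,\tfrac{\epsilon^2}{2})$ and $\Sigma_\nu=\mathrm{diag}(\tfrac12,\epsilon^2)$ are both nonsingular. Optimal IGW couplings maximize the convex functional $\pi\mapsto\|\int xy^\intercal\,d\pi\|_{\F}^2$, so it suffices to compare permutations. For small $\epsilon$ the maximum $\tfrac14$ is attained only when $\{\pm e_1\}$ is matched with $\{(\pm1,\epsilon)\}$. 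By strict convexity, every optimal coupling then has cross-covariance $\pm\tfrac12 e_1e_1^\intercal$, which is singular. The rotation step cannot repair this, since it preserves singular values. Mollifying both measures and using stability of optimal couplings gives absolutely continuous pairs with $\lambda_{\min}(A)\to 0$, while $\lambda_{\min}(\Sigma_\mu)$ stays near $\epsilon^2/2$.

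In this example $\IGW(\mu,\nu)=\tfrac{\sqrt5}{2}\epsilon^2$, which is comparable to $\lambda_{\min}(\Sigma_\mu)$. That is the point: what controls $A$ is IGW-proximity measured against $\lambda_{\min}(\Sigma)$, not nondegeneracy of $\Sigma$ alone. Without a lower bound on $\lambda_{\min}(A)$, \cref{lem:gromov_monge} gives you no Gromov--Monge maps $T^\tau_i$. Your discrete velocities, the discrete continuity equation, and the first-order optimality conditions of Step 3 are then unavailable. In part (ii), the modified generalized geodesics of \cref{def:MGG}, which need $A_1,A_2$ positive definite, may also fail to exist.

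The missing ingredient is the proximity lemma of \cite{zhang2024gradient} (their Lemma 4.2). For measures in an IGW ball $\cB_{\IGW}(\rho_0,\bar\delta)$, with $\bar\delta$ a fixed multiple of $\lambda_{\min}(\Sigma_{\rho_0})$, the aligned optimal cross-covariances are positive definite with a quantitative lower bound on $\lambda_{\min}(A)$. This, not a lower bound on $\lambda_{\min}(\Sigma)$ alone, is what the smallness of $\delta$ is for. Summing your energy inequality and applying the triangle inequality for $\IGW$ gives $\IGW(\rho^\tau_j,\rho_0)\le\sqrt{2\delta(\sF(\rho_0)-\sF_\star)}$. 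This keeps all iterates, and the competitors $\nu$ used in part (ii), inside that ball.

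Your Step 1 discards exactly this information by passing to $\Sigma$. Moreover, $\IGW^2(\mu,\nu)\ge\|\Sigma_\mu-\Sigma_\nu\|_{\F}^2$ fails in general: $\IGW(\mu,O_\#\mu)=0$, while $O\Sigma_\mu O^\intercal\neq\Sigma_\mu$ in general. The inequality holds only when the optimal cross-covariance is symmetric, which is true for consecutive iterates thanks to the rotation step.

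With the proximity lemma in place, your outline does follow the cited argument (Propositions 4.1--4.4 of \cite{zhang2024gradient} for (i)). For (ii), convexity is used only to obtain the uniform $\IGW$ Cauchy estimate for $\bar\rho^n$ (their Proposition 4.5). That estimate comes from the discrete variational inequality along modified generalized geodesics together with a Gr\"onwall argument, and your Step 3 asserts this upgrade without supplying the mechanism. Uniform $\W_2$ convergence and the closure argument of \cite[Theorem 11.1.6]{ambrosio2005gradient} then yield the limiting-subdifferential inclusion.
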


\begin{remark}[Comparison with {\cite[Theorem~4.1]{zhang2024gradient}}]
\cref{thm:zz_main1} slightly differs from Theorem 4.1 in \cite{zhang2024gradient} but follows directly from its proof.
\cref{thm:zz_main1}~(i) constructs,
on the finite interval $[0,\delta]$, a limiting curve for the IGW--JKO scheme. The corresponding velocity field is obtained as a suitable limit of the discrete velocity fields, which satisfy the first-order optimality conditions associated with the discrete sequence $\{\rho_i^\tau\}_{i=0}^n$. This is the content of Propositions 4.1--4.4 from \cite{zhang2024gradient}. \cref{thm:zz_main1}~(ii) derives the gradient flow structure for the limiting curve $\rho$ by establishing that the velocity field $v_t$ constructed in (i) lies in the limiting subdifferential after the action of the mobility operator. In \cite{zhang2024gradient}, the extra $\lambda$-convexity assumption (\cref{assumption:zz_main}~(iii)) plays a crucial role in passing the optimality conditions for the discrete velocity fields to the continuous-time limit, which entails strengthening pointwise weak convergence to uniform $\sW_2$-convergence. These arguments are presented in Propositions 4.5--4.7 from \cite{zhang2024gradient}.

We note that Theorem 4.1~(2) in \cite{zhang2024gradient} states  $v_t \in -(\mathcal{L}_{\rho_t}|_{\mathcal I_{\rho_t}})^{-1}
[\partial \mathsf F(\rho_t)]$. The proof in their Proposition 4.7 invokes Theorem 11.1.6 in \cite{ambrosio2005gradient}, which, however, 
appears to yield only $v_t \in -(\cL_{\rho_t}|_{\cI_{\rho_t}})^{-1} [\partial_\ell \sF(\rho_t)]$.

Finally, Theorem 4.1~(2) in \cite{zhang2024gradient} focuses on the case where the (limiting) subdifferential is a singleton given by the gradient of the first variation of $\sF$, which turns the continuity equation \eqref{eq:zz_PIDE} into a PIDE. Abusing terminology, we shall call the continuity equation \eqref{eq:zz_PIDE} coupled with the differential inclusion \eqref{eq: differential inclusion} a PIDE. 
\end{remark}

\cref{assumption:zz_main}~(i) and (ii) hold for the relative entropy functional $\sH(\cdot \| \gamma)$, so \cref{thm:zz_main1}~(i) applies to our case. However, as shown by the counterexample below, \cref{assumption:zz_main}~(iii) fails to hold for $\sH(\cdot \| \gamma)$. This necessitates a separate argument to establish the conclusion of \cref{thm:zz_main1}~(ii) for $\sH(\cdot \| \gamma)$, which occupies the bulk of the proof of \cref{thm:main} below. The following example presents a case where $\sH(\cdot \| \gamma)$ fails to be $\lambda$-convex along modified generalized IGW geodesics for any $\lambda \in \R$. A counterexample for the generalized IGW geodesic case is lengthy and deferred to \cref{sec: counterexample}.

\begin{example}[Counterexample for modified generalized geodesics]
Let $\mu_0 =  \mu_1 = \cN(0,1),\; \mu_2 = \cN(0,r^2)$, and $X \sim \mu_0.$ One may check that $(X,X, rX)$ is a coupling for $(\mu_0,\mu_1,\mu_2)$ such that $(X,X)$ and $(X,rX)$ give optimal IGW couplings for $(\mu_0,\mu_1)$ and $(\mu_0,\mu_2)$, respectively. Then, $A_1 = \frac12$ and $A_2 = \frac{r}{2},$ which are strictly positive. Defining $\varphi_1(x) = 2x^2$ and $\varphi_2(x) = 2r^2 x^2,$ we have 
$T_1 = \frac{1}{8A_1}\nabla \varphi_1$ and $T_2 = \frac{1}{8A_2}\nabla \varphi_2.$
We may now consider the modified generalized geodesic $\nu_t = (T_t)_{\#}\mu_0$ 
for $t \in [0,1]$ with \eqref{eq: map}. A direct calculation shows that $\nu_{1/2} \sim \cN(0,q^2(r)),$ where $q(r) = \frac{(1 + r)(1 + r^2)}{4r}$.

Consider any $L > 0.$ Since $\int|xx' - r^2xx'|^2 \, d\mu_0(x)d\mu_0(x') = (1 - r^2)^2,$ to contradict $\lambda$-convexity for $\lambda = - L,$
we require that
\begin{equation}\label{eq:want}
    \sH(\nu_{1/2}\|\gamma) > \frac12\sH(\mu_{1}\|\gamma) + \frac12\sH(\mu_{2}\|\gamma) + \frac{L}{4}(1 - r^2)^2 = \frac12\sH(\mu_{2}\|\gamma) + \frac{L}{4}(1 - r^2)^2.
\end{equation}
Recall that $\sH(\cN(0,c^2)\|\gamma) = \frac12 (c^2 - 2\log c - 1).$ So \eqref{eq:want} is equivalent to
\begin{align*}
    q^2(r) - 2 \log q(r) - 1 > \frac{1}{2}\left (r^2 - 2 \log r - 1 + \frac{L}{2}(1 - r^2)^2 \right).
\end{align*}
Since $q^2(r) \sim \frac{1}{16r^2},$ and all other terms grow at most logarithmically to $\infty$ as $r\to0,$ the above inequality holds true for any $L > 0$ for small enough $r.$ Thus, by choosing $r$ appropriately, we have shown that $\KL$ is not $\lambda$-convex for any $\lambda \in \R.$
\end{example}

\section{IGW Gradient Flow for Relative Entropy}\label{sec: construction}

In this section, we formally construct IGW gradient flows for the relative entropy functional. As discussed in the previous section, the results in \cite{zhang2024gradient} do not fully cover $\sH(\cdot \| \gamma)$.
Therefore, our first goal is to establish the gradient flow structure from \cref{thm:zz_main1} for our functional and extend the flow to $[0,\infty)$. The latter extension is needed to consider the long-time behavior of the flow. We summarize the result as follows, with all the proofs for this section deferred to \cref{sec: proof construction}.

\begin{theorem}[IGW gradient flow for relative entropy]\label{thm:main}
Let $\rho_0\in\cP_2(\RR^d)$ have a nonsingular second moment matrix and  satisfy $\sH(\rho_0\|\gamma) < \infty$. Then:
\begin{enumerate}
\item[(i)] The conclusions of Theorem \ref{thm:zz_main1}~(i) and (ii) hold with $\sF = \sH(\cdot \| \gamma)$.
\item [(ii)] One can extend the curve $\rho$ from $[0,\delta]$ to $[0,\infty)$, so that it satisfies
\begin{equation}
\label{eq:PIDE}
\begin{cases}
&\partial_t\rho_t+\nabla\cdot(\rho_t v_t)=0 \quad \text{in the distributional sense on $\R^d \times (0,\infty)$}, \\
&v_t\in-(\cL_{\rho_t}|_{\cI_{\rho_t}})^{-1}[\partial_{\ell}\sH(\rho_t\|\gamma)] \quad \text{$\rho_t$-a.s. for a.e.
$t\in(0,\infty)$}. 
\end{cases}
\end{equation}
\end{enumerate}
\end{theorem}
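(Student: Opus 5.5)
Part (i) splits in two. \cref{thm:zz_main1}~(i) needs only \cref{assumption:zz_main}~(i)--(ii), which hold for $\sH(\cdot\|\gamma)$: it is rotationally invariant because $\gamma$ is, nonnegative, weakly l.s.c., regular by \cite[Chapter 10]{ambrosio2005gradient}, and finite only on absolutely continuous measures. So the pointwise weak limit $\rho$ on $[0,\delta]$, the velocity field $v_t$, and the continuity equation come for free. The real work is part (ii) of \cref{thm:zz_main1}. There, in Propositions 4.5--4.7 of \cite{zhang2024gradient}, global $\lambda$-convexity is used only to obtain discrete variational inequalities (EVI-type) for the JKO iterates. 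From these one gets uniform $\W_2$-convergence of $\bar\rho^n$, then strong-weak closure of the discrete optimality conditions, and finally the limiting subdifferential via \cite[Theorem 11.1.6]{ambrosio2005gradient}.

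I will replace global convexity by a \emph{local} convexity estimate along modified generalized IGW geodesics. The maps $T_t$ in \eqref{eq: map} have the form $M_t(\nabla\psi_t)$, with $M_t=((1-t)(8A_1)^{-1}+t(8A_2)^{-1})$ and $\psi_t=(1-t)\phi_1+t\phi_2$ convex. The entropy of the pushforward splits as
\[
\sH((T_t)_\#\mu_0\|\gamma)=\int\tfrac12\|T_t\|^2d\mu_0-\int\log\det DT_t\,d\mu_0+\text{const}+\int\mu_0\log\mu_0.
\]
The log-determinant term equals $-\log\det M_t-\int\log\det D^2\psi_t$. The second piece is convex in $t$ by concavity of $\log\det$ on PSD matrices. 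The first, $t\mapsto-\log\det M_t$, is convex since $M_t$ is affine in $t$. The failure of convexity therefore comes entirely from the quadratic potential term: $\int\|M_t\nabla\psi_t\|^2$ is a product of two affine-in-$t$ objects, hence a quartic in $t$. Its second derivative can be bounded below by $-C\,(\text{second moments})\times\|A_1^{-1}-A_2^{-1}\|$-type terms. On a region where $\lambda_{\min}(A_i)$ and $\lambda_{\min}(\Sigma_{\mu_i})$ are bounded below and the second moments are bounded above, this yields a $\lambda$-convexity inequality \eqref{eq:lambda_convexity} with a constant $\lambda$ that depends only on these bounds. The counterexample shows such bounds are unavoidable, since there $A_2=r/2\to0$.

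I expect the main obstacle to be checking that all JKO iterates and their optimal couplings stay in such a region uniformly in $n$. For this I will use the energy estimate $\sum_i\IGW^2(\rho_{i+1}^\tau,\rho_i^\tau)/(2\tau)\le \sH(\rho_0\|\gamma)$. As in the choice of $\delta_{\rho_0}$, this keeps $\Sigma_{\rho_i^\tau}$ within a controlled perturbation of $\Sigma_{\rho_0}$, so $\lambda_{\min}(\Sigma_{\rho_i^\tau})\ge\lambda_{\min}(\Sigma_{\rho_0})/2$, say. Moment bounds follow from the entropy bound and Talagrand/Pinsker-type control. The cross-covariance $A$ of an optimal coupling between consecutive iterates is near $\tfrac12\Sigma_{\rho_i^\tau}$ when $\IGW$ is small, which gives it a lower eigenvalue bound. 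With these in hand, I will rerun Propositions 4.5--4.7 of \cite{zhang2024gradient} verbatim, with local $\lambda$.

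For part (ii), I will iterate. The construction on $[0,\delta]$ requires only $\sH(\rho_0\|\gamma)<\infty$ and a lower bound on $\lambda_{\min}(\Sigma_{\rho_0})$, and $\delta_{\rho_0}$ depends only on these. Restarting at $\rho_\delta$, I use $\sH(\rho_\delta\|\gamma)\le\sH(\rho_0\|\gamma)$ (monotonicity from the JKO scheme and l.s.c.). What remains is a uniform-in-time lower bound on $\lambda_{\min}(\Sigma_{\rho_t})$, so that the step sizes do not shrink to zero. I will obtain it from an entropy bound: for $\mu$ with $\sH(\mu\|\gamma)\le H$, $\lambda_{\min}(\Sigma_\mu)\ge c(H,d)>0$. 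To see this, project onto a unit direction $e$; data processing gives $\sH(e_\#\mu\|\mathcal N(0,1))\le H$, and a one-dimensional law with tiny second moment has large entropy relative to $\mathcal N(0,1)$, since the Gaussian of the same variance minimizes it and has entropy $\tfrac12(s^2-2\log s-1)\to\infty$. Hence $\delta_{\rho_t}\ge\bar\delta>0$ uniformly, and concatenating the intervals gives a curve on $[0,\infty)$. Both equations in \eqref{eq:PIDE} are local in time, so they hold on each piece and hence on all of $(0,\infty)$, with the distributional identity patched together using a partition of unity in $t$.
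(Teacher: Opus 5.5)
Your handling of \cref{thm:zz_main1}~(i) and of part~(ii) is sound. In fact, your projection/data-processing argument gives the dimension-free bound $\lambda_{\min}(\Sigma_\mu)\ge e^{-2H-1}$, which is sharper than \cref{lem:spectral-control}.

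The gap is at the central step of part~(i). You assert that, on a region with eigenvalue and moment bounds, the non-convexity of $t\mapsto\int\|T_t\|^2\,d\mu_0$ yields an inequality of the form \eqref{eq:lambda_convexity}, so that Propositions~4.5--4.7 of \cite{zhang2024gradient} can be rerun verbatim. Write $D(\pi_{1,2})=\int|\langle y,y'\rangle-\langle z,z'\rangle|^2\,d\pi_{1,2}\otimes\pi_{1,2}$ and $W=\int\|z-y\|^2\,d\pi$. Two things are missing.

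\begin{enumerate}
\item A bound $f''\ge -C\|A_1^{-1}-A_2^{-1}\|$, as stated, is first order in the displacement ($\|A_2-A_1\|\lesssim W^{1/2}$). The distortion $D(\pi_{1,2})$ is second order, so such a bound can never be absorbed into $-\lambda t(1-t)D(\pi_{1,2})$. You first need the cancellation the paper uses: with $B=A_1^{-1}A_2$, the first-order term is $\tr\big((B+B^{-1}-2I)\Sigma_{\mu_1}\big)=\tr\big(B^{-1}(B-I)^2\Sigma_{\mu_1}\big)$. This makes the defect $\lesssim W$.
\item Even then you need $W\lesssim D(\pi_{1,2})$. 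This is a quantitative stability estimate for the glued coupling, which eigenvalue and moment bounds alone do not give, and you do not prove it. The only available estimate is \eqref{eq:zy-bound} from Section~10.1 of \cite{zhang2024gradient}: $W\lesssim D(\pi_{1,2})+\IGW^2(\mu_1,\mu_0)$. The extra term is not controlled by $D(\pi_{1,2})$; it stays positive even when $\pi_{1,2}$ is the identity coupling. So what you actually obtain is not $\lambda$-convexity in the sense of \eqref{eq:lambda_convexity}, even locally.
\end{enumerate}

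Hence the verbatim rerun is not justified. The paper avoids needing true local $\lambda$-convexity.
\begin{itemize}
\item \cref{lem:local_convexity} is only a first-order-in-$t$ bound, and it keeps the term $t\,\beta_\pi\IGW^2(\mu_1,\mu_0)$.
\item Combined with the similarly perturbed expansion of $\IGW^2$ (\cref{lem:IGW_conv}), it gives the modified discrete variational inequality \cref{lem:prox_ineq}.
\item The Cauchy estimate \eqref{eq: cauchy} is then re-derived, via the cross-partition error function and Gr\"onwall. In the scheme $\mu_0=\rho^\tau_{i-1}$ and $\mu_1=\rho^\tau_i$, so $\IGW^2(\mu_0,\mu_1)\le 4\tau\sH(\rho_0\|\gamma)$ and the extra terms integrate to $O(\tau)$.
\end{itemize}
To repair your route, you must either prove $W\lesssim D(\pi_{1,2})$ locally along modified generalized geodesics, or carry these $O(\tau)$ errors through the discrete variational inequality as the paper does.

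A smaller point: the lower bounds on $\lambda_{\min}(A_i)$, and the existence of the modified generalized geodesic, are needed for couplings of $\rho^\tau_{i-1}$ with iterates $\rho^\eta_j$ of the other partition, not only for consecutive iterates. These follow from all iterates lying in an $\IGW$-ball of radius $\sqrt{2\delta\sH(\rho_0\|\gamma)}$ around $\rho_0$ together with Lemma~4.2 of \cite{zhang2024gradient}. They do not follow from $A\approx\frac12\Sigma$ for consecutive iterates.
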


Extending the conclusion of \cref{thm:zz_main1}~(ii) to our functional $\sH(\cdot \| \gamma),$ which fails to meet \cref{assumption:zz_main}~(iii), requires strengthening pointwise weak convergence of the IGW--JKO interpolant $\bar \rho^n$ to uniform $\sW_2$-convergence along a subsequence. This, in turn, requires a separate argument to obtain uniform Cauchy estimates in $\IGW$ for $\bar \rho^n$; see the discussion at the start of \cref{sec:construction_proof}. Our key observation is that, while $\sH(\cdot \| \gamma)$ fails to be $\lambda$-convex along (modified) generalized IGW geodesics, it nevertheless admits a `local' convexity estimate as stated next.

\begin{lemma}[Local convexity of $\KL$]\label{lem:local_convexity}
In the setting of \cref{def:MGG}, we have
\begin{multline*}
\sH(\nu_t \| \gamma)
\le (1-t)\sH(\mu_1 \| \gamma)+t\sH(\mu_2 \| \gamma) \\
+ t\,\frac{8\sqrt{2}}{c_{1,\mu_0}}\,\beta_{\pi}\,
     \int_{(\R^d \times \R^d)^2} \bigl|\langle y,y'\rangle-\langle z,z'\rangle\bigr|^2\, d\pi_{1,2}\otimes\pi_{1,2}(y,z,y',z')  \\
  + t\,\frac{12\sqrt{2}}{c_{1,\mu_0}}\,\beta_{\pi}\, \IGW^2(\mu_1,\mu_0)
  + O(t^2),
\end{multline*}
where $c_{1,\mu_0} \coloneqq \lambda_{\min}(\Sigma_{\mu_0}) > 0,$  $c_{2,\pi} > 0$ is chosen such that
$1/c_{2,\pi} \leq \lambda_{\min}(A_i) \leq \lambda_{\max}(A_i) \leq c_{2,\pi},$ for $i = 1,2$, and
\[
\beta_{\pi} = \frac{c_{2,\pi}}{2}
M_2^{1/2}(\mu_0)M_2^{1/2}(\mu_1) + \frac{c_{2,\pi}^4}{4} M_2(\mu_0)M_2(\mu_1).
\]
\end{lemma}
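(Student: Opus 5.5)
Since $\sH(\mu\|\gamma)=\int \rho\log\rho\,dx+\frac12 M_2(\mu)+\frac d2\log(2\pi)$, I will treat the two pieces of $\sH(\nu_t\|\gamma)$ along $\nu_t=(T_t)_\#\mu_0$ separately.

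\emph{Entropy term.} Write $T_t=C_t^{-1}\nabla\psi_t$ with $C_t=(1-t)8A_1+\ldots$; more precisely, $C_t^{-1}:=(1-t)(8A_1)^{-1}+t(8A_2)^{-1}$ and $\nabla\psi_t=(1-t)\nabla\phi_1+t\nabla\phi_2$. The matrix $C_t^{-1}$ is symmetric positive definite, and $\psi_t$ is convex. The change of variables formula gives
\[
\int\rho_{\nu_t}\log\rho_{\nu_t}=\int\rho_{\mu_0}\log\rho_{\mu_0}-\log\det C_t^{-1}-\int\log\det D^2\psi_t\,d\mu_0 .
\]
Concavity of $\log\det$ on PSD matrices bounds both $-\log\det\big((1-t)(8A_1)^{-1}+t(8A_2)^{-1}\big)$ and $-\log\det\big((1-t)D^2\phi_1+tD^2\phi_2\big)$ by their convex combinations, using the Alexandrov Hessian as in McCann's displacement convexity argument. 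The same identity at $t=0,1$ recovers $\int\rho_{\mu_i}\log\rho_{\mu_i}$. Hence the internal energy part is exactly convex along $\nu_t$, with no error term. This is why the modified geodesic is used: the matrix factor and the gradient factor are interpolated separately.

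\emph{Second moment term.} The failure of convexity is entirely in $t\mapsto M_2(\nu_t)=\int\|T_t\|^2d\mu_0$, because $T_t$ is a product of two affine-in-$t$ objects and is therefore quadratic in $t$. I will expand $T_t=T_1+t\,\dot T_0+O(t^2)$, where
\[
\dot T_0=\big((8A_2)^{-1}-(8A_1)^{-1}\big)\nabla\phi_1+(8A_1)^{-1}(\nabla\phi_2-\nabla\phi_1).
\]
This gives
\[
M_2(\nu_t)=M_2(\mu_1)+2t\int\langle T_1,\dot T_0\rangle d\mu_0+O(t^2),
\]
and the quantity to control is
\[
\int\langle T_1,\dot T_0\rangle\,d\mu_0-\big(M_2(\mu_2)-M_2(\mu_1)\big).
\]
Rewriting $\nabla\phi_i=8A_iT_i$, this difference becomes
\[
\int\langle T_1,(A_1^{-1}A_2-I)T_2\rangle\,d\mu_0+\int\langle T_1,T_2-T_1\rangle\,d\mu_0-\big(M_2(\mu_2)-M_2(\mu_1)\big)
\]
up to the matrix factors, and I will bound it by Cauchy--Schwarz.

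\emph{Main obstacle.} The hard part is converting these linear-in-$\pi$ quantities, such as $\|A_2-A_1\|$ and $\int\|T_2-T_1\|^2$, into the quartic IGW-type cost $\int|\langle y,y'\rangle-\langle z,z'\rangle|^2d\pi_{1,2}^{\otimes2}$ and $\IGW^2(\mu_1,\mu_0)$. I will first use the identity
\[
\int|\langle y,y'\rangle-\langle z,z'\rangle|^2d\pi_{1,2}^{\otimes2}=\|\Sigma_{\mu_1}\|_\F^2+\|\Sigma_{\mu_2}\|_\F^2-2\Big\|\int yz^\intercal d\pi_{1,2}\Big\|_\F^2 .
\]
Next, I will bound the linear displacement via $\int\|Bx\|^2d\mu_0\ge c_{1,\mu_0}\|B\|_\F^2$, which is where $1/c_{1,\mu_0}$ enters. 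The factors $c_{2,\pi}$ and $c_{2,\pi}^4$ come from bounding $A_i^{-1}$ and products of four such matrices. Finally, $M_2^{1/2}(\mu_0)M_2^{1/2}(\mu_1)$ arises from Cauchy--Schwarz bounds on $\|A_i\|\le\frac12 M_2^{1/2}(\mu_0)M_2^{1/2}(\mu_i)$. The $\IGW^2(\mu_1,\mu_0)$ term appears when comparing cross terms of $T_1$ against $x$, via the $(\mu_0,\mu_1)$ optimal coupling.

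I expect this comparison between Frobenius norms of matrix differences and the quartic cost to be the delicate step. Tracking constants such as $8\sqrt2$ and $12\sqrt2$ is routine once the inequality $\|\int yz^\intercal d\pi\|$-type estimates are set up. The $O(t^2)$ remainder is bounded by $M_2$ of the maps and is harmless.
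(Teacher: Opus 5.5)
\textbf{There is a genuine gap in the second-moment part.} Your entropy part is fine; it is the convexity along modified generalized geodesics that the paper imports from Section~10.1 of \cite{zhang2024gradient}. The problem is the first-order bookkeeping for $M_2(\nu_t)$.

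Since $M_2(\nu_t)=M_2(\mu_1)+2t\int\langle T_1,\dot T_0\rangle\,d\mu_0+O(t^2)$, the quantity to control is $2\int\langle T_1,\dot T_0\rangle\,d\mu_0-(M_2(\mu_2)-M_2(\mu_1))$, not the version without the factor $2$. Also, with $y=T_1(x)$, $z=T_2(x)$ and $B=A_1^{-1}A_2$, one has $\dot T_0=Bz+B^{-1}y-2y$. Your rewriting (``up to the matrix factors'') drops the term $\int\langle y,(B^{-1}-I)y\rangle\,d\pi$.

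This matters because each term you propose to estimate by Cauchy--Schwarz is of \emph{first} order in the displacement:
\begin{itemize}
\item $\int\langle y,(B-I)z\rangle\,d\pi$ only gives $\|B-I\|_{\op}M_2^{1/2}(\mu_1)M_2^{1/2}(\mu_2)$.
\item $\int\langle y,z-y\rangle\,d\pi-(M_2(\mu_2)-M_2(\mu_1))=\int\langle y-z,z\rangle\,d\pi$ only gives $M_2^{1/2}(\mu_2)(\int\|z-y\|^2d\pi)^{1/2}$.
\end{itemize}
Combined with $\|B-I\|_{\op}\le\frac{c_{2,\pi}}{2}(M_2(\mu_0)\int\|z-y\|^2d\pi)^{1/2}$, you get an error of order $(\int\|z-y\|^2d\pi)^{1/2}$. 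That is the square root of the quartic cost plus $\IGW^2(\mu_1,\mu_0)$, not something linear in them. This is not the stated inequality. It would also be useless downstream, where the error must be absorbed into the $\frac{1}{2\tau}$-weighted cost in \cref{lem:prox_ineq}.

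\textbf{The missing idea is two exact cancellations} that make the first-order coefficient quadratic in the displacement.
\begin{enumerate}
\item $2\int\langle y,z-y\rangle\,d\pi-(M_2(\mu_2)-M_2(\mu_1))=-\int\|z-y\|^2\,d\pi\le 0$. The paper gets this by adding and subtracting $t(z-y)$ and applying the convexity identity to the squared norm. It then discards the $-t(1-t)\int\|y-z\|^2\,d\pi$ term.
\item The two matrix terms must be paired: $(B-I)z+(B^{-1}-I)y=(B-I)(z-y)+B^{-1}(B-I)^2y$. Using $\|B^{-1}\|_{\op}\le c_{2,\pi}^2$, the remaining cross term is then at most $2t\bigl(\|B-I\|_{\op}M_2^{1/2}(\mu_1)(\int\|z-y\|^2d\pi)^{1/2}+\|B^{-1}\|_{\op}\|B-I\|_{\op}^2M_2(\mu_1)\bigr)\le 2t\beta_\pi\int\|z-y\|^2\,d\pi$. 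So $M_2^{1/2}(\mu_0)$ in $\beta_\pi$ enters through the bound on $\|B-I\|_{\op}$, not through $\|A_i\|$.
\end{enumerate}
Only after these cancellations does the bound $\int\|z-y\|^2d\pi\le\frac{2\sqrt2}{c_{1,\mu_0}}\bigl(2\int|\langle y,y'\rangle-\langle z,z'\rangle|^2d\pi\otimes\pi+3\IGW^2(\mu_1,\mu_0)\bigr)$ give exactly the constants $8\sqrt2$ and $12\sqrt2$. The paper also takes this bound from \cite{zhang2024gradient}. Your Frobenius-norm identity is a plausible route to it, but it cannot rescue the first-order terms.
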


The proof of \cref{thm:main}~(ii) uses a concatenation argument to repeatedly initiate another gradient flow at the endpoint of the previous run. This argument requires establishing uniform nondegeneracy of the second moment matrices $\Sigma_{\rho_t}$ along the flow, which we account for using the following lemma.

\begin{lemma}[Spectral control from relative entropy]
\label{lem:spectral-control}
If $\sH(\rho\|\gamma)\le C<\infty$, then
\[
   \exp\!\left(-2C-d-(d-1)\log(4C+2d)\right) \leq \lambda_{\min}(\Sigma_\rho)
   \leq \lambda_{\max}(\Sigma_\rho) \leq 4C+2d .
\]
\end{lemma}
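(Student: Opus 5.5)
The argument splits into two parts. The upper bound comes from the Donsker--Varadhan variational formula. The lower bound comes from comparing $\rho$ with the Gaussian that has the same second moment matrix, using the fact that this Gaussian maximizes entropy among such measures.

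\textbf{Upper bound.} Write $M_2(\rho)=\tr(\Sigma_\rho)$. By Donsker--Varadhan, for any measurable $f$,
\[
\int f\,d\rho \le \sH(\rho\|\gamma)+\log\int e^{f}\,d\gamma .
\]
Take $f(x)=\|x\|^2/4$. Then
\[
\int e^{f}\,d\gamma=(1-1/2)^{-d/2}=2^{d/2}.
\]
This gives $M_2(\rho)/4\le C+\tfrac d2\log 2\le C+d/2$, so $M_2(\rho)\le 4C+2d$. Since $\Sigma_\rho$ is PSD, $\lambda_{\max}(\Sigma_\rho)\le \tr(\Sigma_\rho)\le 4C+2d$.

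\textbf{Lower bound.} Since $C<\infty$, $\rho$ has a density. Expand the relative entropy against the explicit Gaussian density:
\[
\sH(\rho\|\gamma)=\int \rho\log\rho\,dx+\tfrac d2\log(2\pi)+\tfrac12\tr\Sigma_\rho .
\]
By the Gaussian maximum-entropy property, the differential entropy satisfies
\[
-\int\rho\log\rho \le \tfrac12\log\bigl((2\pi e)^d\det\Sigma_\rho\bigr),
\]
where the entropy of the mean-zero Gaussian with covariance $\Sigma_\rho$ is used. This is valid because the covariance of $\rho$ is at most $\Sigma_\rho$ in the PSD order and the entropy bound is monotone in the covariance. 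Combining the two displays gives
\[
C\ge \sH(\rho\|\gamma)\ge \tfrac12\bigl(\tr\Sigma_\rho-\log\det\Sigma_\rho-d\bigr)=\tfrac12\sum_i(\lambda_i-\log\lambda_i-1),
\]
which is exactly $\sH(\gamma_{\Sigma_\rho}\|\gamma)$. If $\rho$ is not centered, one can instead argue that $\gamma_{\Sigma_\rho}$ minimizes $\sH(\cdot\|\gamma)$ over measures with second moment matrix $\Sigma_\rho$, since $\log(d\gamma_{\Sigma}/d\gamma)$ is a quadratic form with no linear term. This yields the same inequality directly: $\sH(\rho\|\gamma)=\sH(\rho\|\gamma_{\Sigma_\rho})+\sH(\gamma_{\Sigma_\rho}\|\gamma)$, because $\int \log(d\gamma_{\Sigma_\rho}/d\gamma)\,d\rho$ depends only on $\Sigma_\rho$. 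I will use this Pythagorean identity; it avoids the entropy-maximization detour.

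Let $\lambda_{\min}=\lambda_1$. Each term $\lambda_i-\log\lambda_i-1$ is nonnegative, and $\lambda_i\ge 0$, so
\[
-\log\lambda_1\le 2C+d+\sum_{i\ge2}\log\lambda_i-\sum_i\lambda_i\le 2C+d+(d-1)\log(4C+2d),
\]
using $\lambda_i\le\lambda_{\max}\le 4C+2d$ from the upper bound. Exponentiating gives the claimed lower bound.

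The only delicate point is the identity $\int\log(d\gamma_\Sigma/d\gamma)\,d\rho=\int\log(d\gamma_\Sigma/d\gamma)\,d\gamma_\Sigma$. It requires $\Sigma_\rho$ to be nonsingular for $\gamma_{\Sigma_\rho}$ to have a density. If $\Sigma_\rho$ were singular, $\rho$ would be supported on a proper subspace and so would not be absolutely continuous, contradicting $C<\infty$. This is the main check; the rest is bookkeeping.
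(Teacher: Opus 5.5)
Your proof is correct and follows the same skeleton as the paper's: bound the trace, compare with a Gaussian to get $\tfrac12(\tr\Sigma-\log\det\Sigma-d)\le C$, then control the other eigenvalues with the upper bound. Both key steps, however, use different tools.

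For the upper bound, the paper invokes Talagrand's $T_2$ inequality $\sW_2^2(\rho,\gamma)\le 2\sH(\rho\|\gamma)$ and then integrates $\|x\|^2\le 2\|x-y\|^2+2\|y\|^2$ along an optimal plan. Your Donsker--Varadhan argument with $f=\|x\|^2/4$ needs only the Gibbs variational principle and a Gaussian integral, and it gives the slightly sharper $M_2(\rho)\le 4C+2d\log 2$.

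For the lower bound, the paper compares $\rho$ with $\mathcal N(m_\rho,\tilde\Sigma_\rho)$ via the Gaussian maximum-entropy property and discards the trace and mean terms. It then needs Weyl-type comparisons between the covariance $\tilde\Sigma_\rho$ and $\Sigma_\rho=\tilde\Sigma_\rho+m_\rho m_\rho^\intercal$. Your exact identity $\sH(\rho\|\gamma)=\sH(\rho\|\gamma_{\Sigma_\rho})+\sH(\gamma_{\Sigma_\rho}\|\gamma)$ works with $\Sigma_\rho$ directly, because $\log(d\gamma_{\Sigma_\rho}/d\gamma)$ is a quadratic form with no linear part. So no mean/covariance bookkeeping is needed, and the identity in fact proves the max-entropy principle rather than citing it. Your first route via $\tilde\Sigma_\rho\preceq\Sigma_\rho$ is also valid and already covers non-centered $\rho$, so the caveat about $\rho$ not being centered is unnecessary, though harmless.

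Finally, you note that each $\lambda_i-\log\lambda_i-1$ is nonnegative but never use it in your display. Using it to drop the terms with $i\ge 2$ gives $\lambda_1-\log\lambda_1-1\le 2C$, hence $\lambda_{\min}(\Sigma_\rho)\ge e^{-2C-1}$. This bound is dimension-free and implies the stated one, and it needs no upper bound on the other eigenvalues at all. The paper's route loses this because it throws away $\tr\tilde\Sigma_\rho$.
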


The uniform nondegeneracy of $\Sigma_{\rho_t}$ along the IGW gradient flow readily follows from this lemma. Indeed, for any $n \in \NN$ and $t \in [0,\infty)$, it holds that $\sH(\bar \rho^n_t \| \gamma) \leq \sH(\rho_0\|\gamma)$ from the definition of the JKO scheme. Since $\KL$ is weakly lower semicontinuous, we further have $\sH(\rho_t\|\gamma) \leq \sH(\rho_0\|\gamma)$ for all $t \in [0,\infty)$. 
By \cref{lem:spectral-control}, we conclude
\begin{equation}
k_{\rho_0} \le \inf_{t \in [0,\infty)} \lambda_{\min}(\Sigma_{\rho_t}) \le \sup_{t \in [0,\infty)} \lambda_{\max}(\Sigma_{\rho_t}) \le 4\sH(\rho_0 \| \gamma) + 2d,
\label{eq: lower bound}
\end{equation}
where 
\begin{equation}
k_{\rho_0} := \exp\!\left(-2\sH(\rho_0 \| \gamma)-d-(d-1)\log(4\sH(\rho_0 \| \gamma)+2d)\right).
\label{eq: lower bound value}
\end{equation}

\section{Main Results}
\label{sec: main}

Theorem~\ref{thm:main} provides a characterization of the IGW gradient flow for $\sH(\cdot \| \gamma)$ through the PIDE (\ref{eq:PIDE}). However, the implicit nature of the PIDE makes it difficult to interpret the flow dynamics. To address that, in this section, we first derive a closed-form expression for the vector field $v_t$, which gives rise to a nonlinear FPE for $\rho_t$. The resulting PDE is nonlinear as the coefficients depend on the marginal law $\rho_t$ through the second moment matrix $\Sigma_{\rho_t}$. Somewhat surprisingly, one can decouple the dynamics of $\Sigma_{\rho_t}$ from $\rho_t$, which results in a linear FPE for $\rho_t$. A standard argument then shows that $\rho_t$ corresponds to the time-marginal flow of a linear SDE that resembles the OU process, which provides a simple probabilistic representation of the IGW gradient flow. Finally, we establish exponential contraction for the IGW gradient flow in relative entropy. Throughout this section, we consider the setting of \cref{thm:main}, with all proofs provided in \cref{sec: proof main}.

\subsection{Fokker--Planck equations}

We first evaluate the limiting subdifferential of $\sH(\cdot\|\gamma)$ and the action of the inverse mobility operator on it. This turns the PIDE from \cref{thm:main} into an FPE with explicit, albeit nonlinear, coefficients.

\begin{theorem}[Nonlinear FPE]
\label{thm:nonlin-FP}
The IGW gradient flow $(\rho_t)_{t \in [0,\infty)}$ from \cref{thm:main} solves the nonlinear FPE
\begin{equation}\label{eq:nonlinear-FP}
    \partial_t\mu_t(x) - \nabla\cdot\left(\frac12\Sigma_{\mu_t}^{-1}\nabla\mu_t(x) + \frac14\big(\Sigma_{\mu_t}^{-1}+\Sigma_{\mu_t}^{-2}\big)x\mu_t(x) \right) =0
\end{equation}
in the distributional sense on $\RR^d\times(0,\infty)$. 
\end{theorem}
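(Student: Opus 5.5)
The plan is to compute the velocity field in \eqref{eq:PIDE} in closed form and substitute it into the continuity equation. This takes three steps: identify $\partial_\ell\sH(\rho_t\|\gamma)$, evaluate the inverse mobility operator from \cref{prop:fredholm}~(ii) on it, and rewrite $\rho_t v_t$ in divergence form.

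\emph{Step 1: the limiting subdifferential.} Fix $t$ in the full-measure set where the differential inclusion in \eqref{eq:PIDE} holds, and take $\xi\in\partial_\ell\sH(\rho_t\|\gamma)$. By definition $\xi$ is a weak limit of strong subdifferentials $\xi_k\in\partial\sH(\mu_k\|\gamma)$, with uniformly bounded entropy, second moments and $L^2$ norms. For the relative entropy with respect to $\gamma$, the strong subdifferential is characterized in \cite[Theorem 10.4.9]{ambrosio2005gradient}: $\mu_k\in W^{1,1}_{\mathrm{loc}}$ and $\xi_k=\nabla\mu_k/\mu_k+x$ $\mu_k$-a.e. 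Equivalently, $\int \langle \xi_k - x,g\rangle\,d\mu_k=-\int \nabla\cdot g\,d\mu_k$ for all $g\in C_c^\infty(\R^d;\R^d)$.

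This identity passes to the limit, using weak convergence of $\mu_k$ together with the weak convergence of $\xi_k$ in the sense of the definition. Hence $\xi-x$ is the distributional logarithmic gradient of $\rho_t$. Since $\xi-x\in L^2(\rho_t)$, we get $\nabla\rho_t=(\xi-x)\rho_t\in L^1_{\mathrm{loc}}$, so $\rho_t\in W^{1,1}_{\mathrm{loc}}$ and
\[
\partial_\ell\sH(\rho_t\|\gamma)\subset\{\nabla\log\rho_t+x\}.
\]
By \cref{thm:main} the set is nonempty, so it is exactly this singleton. This is the step I expect to require the most care, because there is little a priori regularity on $\rho_t$. The point is that everything is phrased through the weak integration-by-parts identity, so no smoothness of $\rho_t$ is needed.

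\emph{Step 2: inverting the mobility operator.} Write $w=\nabla\log\rho_t+x$ and $\Sigma=\Sigma_{\rho_t}$; $\Sigma$ is nonsingular by \eqref{eq: lower bound}. I would show
\[
\int y\,w(y)^\intercal\,d\rho_t=\int y\,\nabla\rho_t(y)^\intercal\,dy+\Sigma=-I+\Sigma,
\]
by integration by parts against cutoffs $y\chi_R(y)$. The boundary terms vanish because $\int\|y\|\,\|\nabla\rho_t\|\,dy\le \|w-x\|_{L^2(\rho_t)}M_2^{1/2}(\rho_t)<\infty$, so dominated convergence applies. The resulting matrix is symmetric, so $w\in\cI_{\rho_t}$.

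The Sylvester equation $\Sigma B+B\Sigma=\tfrac12(\Sigma-I)$ has the unique solution $B=\tfrac14(I-\Sigma^{-1})$, which commutes with $\Sigma$. Formula \eqref{eq: inverse} then gives
\[
v_t=-\tfrac12\Sigma^{-1}(\nabla\log\rho_t+x)+\tfrac14\Sigma^{-1}(I-\Sigma^{-1})x=-\tfrac12\Sigma^{-1}\nabla\log\rho_t-\tfrac14(\Sigma^{-1}+\Sigma^{-2})x .
\]

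\emph{Step 3: the FPE.} Multiplying by $\rho_t$ gives $\rho_tv_t=-\tfrac12\Sigma_{\rho_t}^{-1}\nabla\rho_t-\tfrac14(\Sigma_{\rho_t}^{-1}+\Sigma_{\rho_t}^{-2})x\rho_t$. Both terms are in $L^1_{\mathrm{loc}}$, and $\Sigma_{\rho_t}^{-1}$ does not depend on $x$. Plugging this into the distributional continuity equation in \eqref{eq:PIDE} yields \eqref{eq:nonlinear-FP} in the distributional sense. The time integrability needed for the test-function formulation comes from two facts. First, $\int_0^T\|v_t\|_{L^2(\rho_t)}\,dt<\infty$ on every finite horizon, inherited from the construction of the flow. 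Second, the uniform spectral bounds \eqref{eq: lower bound} on $\Sigma_{\rho_t}$ keep all coefficients bounded in $t$.
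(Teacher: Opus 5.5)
Your proposal is correct and follows essentially the same route as the paper. The steps match: identify $-\cL_{\rho_t}[v_t]=\nabla\rho_t/\rho_t+x$, verify $\int y\,\nabla\rho_t(y)^\intercal\,dy=-I$ by a cutoff argument, solve the Sylvester equation, and substitute the resulting $v_t$ into the continuity equation. The only differences are cosmetic: the paper cites Example 11.1.9 of \cite{ambrosio2005gradient} for the first step where you pass the weak integration-by-parts identity to the limit yourself, and you invert $\cL_{\rho_t}|_{\cI_{\rho_t}}$ on $w$ in one step (with $B=\tfrac14(I-\Sigma^{-1})$) where the paper inverts separately on $\nabla\rho_t/\rho_t$ and $\id$, which gives the same $v_t$ by linearity.
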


From Example 11.1.9 in \cite{ambrosio2005gradient}, one may deduce that for a.e. $t \in [0,\infty)$, $\rho_t \in W^{1,1}_{\mathrm{loc}}(\R^d)$ and $\nabla \rho_t=\rho_t(w_t-\id)$ for $w_t = -\cL_{\rho_t}[v_t]$. A closed-form expression for the velocity field, $v_t$, follows by evaluating the actions of $(\cL_{\rho_t}|_{\cI_{\rho_t}})^{-1}$ on $\frac{\nabla \rho_t}{\rho_t}$ and $\id$. This requires some care due to the lack of a priori regularity estimates on $\rho_t$.

The nonlinearity of the PDE \eqref{eq:nonlinear-FP} comes from the second moment matrix $\Sigma_{\rho_t}$,
 which depends on the marginal law $\rho_t$. 
 The next theorem identifies an autonomous ODE for the second moment matrix flow $t \mapsto \Sigma_{\rho_t}$, which turns the nonlinear PDE (\ref{eq:nonlinear-FP}) into a linear FPE. Let $\mathbb{S}^d_{++}$ denote the cone of symmetric positive definite matrices in $\R^{d \times d}$. Recall that a Borel measure $\mu$ on $\R^d$ is called a subprobability measure if $\mu (\R^d) \le 1$.

\begin{theorem}[Linear FPE and second moment ODE]\label{thm:lin-FP}
    The IGW gradient flow $(\rho_t)_{t \in [0,\infty)}$ is the unique subprobability solution to the linear FPE
     \begin{equation}\label{eq:linear-FP}
         \partial_t\mu_t(x)
         - \nabla\cdot\left(\dfrac12 A_t^{-1}\nabla\mu_t(x) +\dfrac14\bigl(A_t^{-1}+A_t^{-2}\bigr)x\mu_t(x) \right)=0, 
     \end{equation}
   with $\mu_0 = \rho_0$,  where $A:[0,\infty) \to \R^{d \times d}$ is the unique solution of the initial-value problem
\begin{equation}\label{eq:IVP}
\begin{aligned}
\dot{A}_t &= \frac12\left(A_t^{-1} - I\right)
\quad \text{for a.e. } t \in (0,\infty), \quad A_{0} = \Sigma_{\rho_0}, \\
A_t &\in \mathbb{S}^d_{++}
\quad \text{for all } t \in (0,\infty).
\end{aligned}
\end{equation}
Moreover, $A_t = \Sigma_{\rho_t}$ for every $t \in [0,\infty).$
\end{theorem}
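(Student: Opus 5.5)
The plan is to first derive the ODE for $\Sigma_{\rho_t}$ from the nonlinear FPE of \cref{thm:nonlin-FP}. Then I will show that the IVP \eqref{eq:IVP} is well posed, which identifies $\Sigma_{\rho_t}=A_t$. Finally, I will invoke uniqueness for linear Fokker--Planck equations with regular coefficients.

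\emph{Step 1 (moment ODE).} Test \eqref{eq:nonlinear-FP} against $g(t,x)=\eta(t)\,\langle u,x\rangle^2\chi_R(x)$, where $u\in\R^d$ is fixed, $\eta\in C_c^\infty((0,\infty))$, and $\chi_R$ is a cutoff equal to $1$ on the ball of radius $R$. To pass $R\to\infty$, I will use the bound $\sup_t M_2(\rho_t)<\infty$ from \eqref{eq: lower bound}. I also need control on the term $\int|\nabla\rho_t|\,|x|\,dx$. For this I use $\nabla\rho_t=\rho_t(w_t-\id)$ with $w_t\in L^2(\rho_t)$, $\int_0^T\|w_t\|^2_{L^2(\rho_t)}\,dt<\infty$; this integrability comes from the energy estimate of the JKO construction, together with the bounded spectra of $\Sigma_{\rho_t}$.

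Integrating by parts in the diffusion term gives $\int\langle\nabla \langle u,x\rangle^2,\tfrac12\Sigma^{-1}\nabla\rho\rangle\,dx=-\tr(\Sigma^{-1}uu^\intercal)$. The drift term gives $-\tfrac12 u^\intercal(\Sigma^{-1}+\Sigma^{-2})\Sigma u$, after symmetrizing over $x x^\intercal$. Hence, in the weak sense,
\[
\tfrac{d}{dt}\Sigma_{\rho_t}=\tfrac12\left(\Sigma_{\rho_t}^{-1}-I\right)
\]
(one checks that the constants combine exactly to this). The map $t\mapsto\Sigma_{\rho_t}$ is then absolutely continuous, since its distributional derivative is bounded by \eqref{eq: lower bound}. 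Continuity at $t=0$ follows from the $\W_2$-continuity of the curve, so $\Sigma_{\rho_0}$ is attained as the initial value.

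\emph{Step 2 (well-posedness of \eqref{eq:IVP}).} The right-hand side $A\mapsto\frac12(A^{-1}-I)$ is locally Lipschitz on $\mathbb S^d_{++}$, so local existence and uniqueness are standard. Global existence follows by working in the eigenbasis of $A_0$: the equation preserves commutation with $A_0$, and each eigenvalue solves the scalar ODE $\dot a=\frac12(a^{-1}-1)$. This scalar ODE keeps $a$ between $\min(a_0,1)$ and $\max(a_0,1)$, so solutions never reach the boundary of the cone. Any absolutely continuous solution that stays in $\mathbb S^d_{++}$ is automatically $C^1$, so the Lipschitz uniqueness applies. Combining with Step 1 gives $A_t=\Sigma_{\rho_t}$ for all $t$. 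Substituting this into \eqref{eq:nonlinear-FP} shows that $\rho_t$ solves the linear FPE \eqref{eq:linear-FP}.

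\emph{Step 3 (uniqueness among subprobability solutions).} This is the step I expect to be the main obstacle. Here $A_t$ is known, smooth, and uniformly elliptic (by Step 2), and the drift is linear in $x$ with bounded time-dependent coefficients. For such coefficients, uniqueness of measure-valued (sub)probability solutions holds, for instance via the Bogachev--Krylov--R\"ockner--Shaposhnikov theory or a Figalli/Trevisan superposition argument. I would try a direct duality argument first. Solve the backward Kolmogorov equation $\partial_s\varphi+\tfrac12\tr(A_s^{-1}\nabla^2\varphi)-\tfrac14\langle(A_s^{-1}+A_s^{-2})x,\nabla\varphi\rangle=0$ with terminal datum $\varphi(T)=\psi\in C_c^\infty$. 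Because the SDE is linear, this solution is explicit as a Gaussian convolution composed with a linear flow, so it is smooth and bounded with bounded derivatives. Pairing any subprobability solution $\mu_t$ with $\varphi$ then shows $\int\psi\,d\mu_T=\int\varphi(0)\,d\rho_0$.

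The delicate point is justifying this pairing when $\varphi$ is not compactly supported and $\mu_t$ has no moment assumption. I would handle it with cutoffs $\chi_R\varphi$. The error terms are of the form $\int |x|\,|\nabla\chi_R|\,|\varphi|+\dots$, and these are bounded using $|x||\nabla\chi_R|\lesssim 1$ on the annulus. They vanish as $R\to\infty$ by dominated convergence, since each $\mu_t$ is a finite measure and $\int_0^T\mu_t(\{R\le|x|\le 2R\})\,dt\to0$.
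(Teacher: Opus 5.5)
Your proposal is correct. Step 1 is the paper's argument: the paper tests the continuity equation against $\xi(t)\eta_R(x)x_ix_j$ and removes the cutoff using $\sup_t M_2(\rho_t)<\infty$ and the kinetic-energy bound $\int_0^T\|v_t\|^2_{L^2(\rho_t)}\,dt<\infty$. That bound is equivalent to your bound on $w_t=-\cL_t[v_t]$, given the spectral bounds on $\cL_t$. The paper then substitutes the closed form of $v_t$ and uses $\int x_j\partial_k\rho_t\,dx=-\delta_{jk}$. Your polarized test functions $\langle u,x\rangle^2$ are a cosmetic difference. Your remark that continuity at $t=0$ follows from Wasserstein continuity makes explicit a point the paper leaves implicit.

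You depart from the paper in the two uniqueness claims. For \eqref{eq:IVP}, the paper writes the ODE as the Euclidean gradient flow of the convex function $\Psi(X)=\frac12(\tr X-\log\det X)$ and cites uniqueness of gradient flows of convex functions. It never needs global existence, since $\Sigma_{\rho_t}$ is itself a global solution. Your local-Lipschitz (Picard--Lindel\"of) argument with the eigenvalue reduction is more elementary. It also gives existence and the bounds $\min(a_0,1)\le a_i(t)\le\max(a_0,1)$, which the paper derives only later, in the proof of Theorem \ref{thm:exp_conv}.

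For the linear FPE, the paper cites Theorem 9.4.6 of Bogachev--Krylov--R\"ockner--Shaposhnikov with Lyapunov function $V(x)=1+\|x\|^2$. That is a general result and does not use the linearity of the drift. Your duality argument is self-contained but tied to this equation. It relies on two facts. First, the backward Kolmogorov solution is explicit and Gaussian, so $\varphi$ lies in $C^{1,2}_b([0,T]\times\R^d)$. This holds even at $s=T$, where the covariance degenerates, because $\psi$ is smooth. Second, the drift grows at most linearly. That makes $|x|\,|\nabla\chi_R|\lesssim 1$ absorb the cutoff errors with no moment assumption on the competing solution $\mu_t$.

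To complete your version, two details need writing out. You must pass from time-independent test functions to the time-dependent ones $\chi_R\varphi$ in the weak formulation. You also need $A\in C^1$, which your Step 2 supplies, so that $\varphi$ has the required time regularity.
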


The proof of \cref{thm:lin-FP} begins by testing the nonlinear PDE \eqref{eq:nonlinear-FP} against suitable approximations of the quadratic functions $x_i x_j$ to establish that the mapping $t \mapsto\Sigma_{\rho_t}$ is absolutely continuous and identify the matrix ODE. 
Next, we establish uniqueness for the second moment ODE by recasting it as a gradient flow, and then
invoke the uniqueness theory for FPEs developed in Chapter 9 of \cite{bogachev2022fokker} to conclude that (\ref{eq:linear-FP}) admits a unique subprobability solution.

\begin{remark}[Gradient flow interpretation of second moment ODE and Wasserstein comparisons]\label{rem:grad_flow}
We observe that the second moment ODE (\ref{eq:IVP}) admits a natural gradient flow interpretation.
Indeed, when $\KL$ is restricted to centered Gaussian measures and parametrized by the covariance matrix, one has
\[ \sH(\gamma_A\|\gamma) = \frac12\big(\tr(A)-\log\det(A)-d\,\big).\]
Taking the Euclidean gradient  w.r.t. the
Frobenius inner product gives
\[\nabla \sH(\gamma_A\|\gamma) =\frac12(I-A^{-1}).\]
Consequently, (\ref{eq:IVP}) can be written as
\begin{equation}\label{eq:KL_flow}
    \dot A_t = \frac12(A_t^{-1}-I) = -\nabla \sH(\gamma_{A_t}\|\gamma),
    \quad
    A_0=\Sigma_{\rho_0}.
\end{equation}
Thus, the second moment ODE is precisely the Euclidean gradient flow of
$\KL$ restricted to centered Gaussian distributions under the covariance parametrization.

On the other hand, recall that the second moment dynamics for the 2-Wasserstein gradient flow of $\KL$ restricted to the Bures--Wasserstein manifold \cite[Equation (4)]{lambert2022variational}~is 
\begin{equation}\label{eq:W_2-cov}
    \dot B_t = -2(B_t - I).
\end{equation}
\cref{eq:W_2-cov} can be viewed as a `preconditioned' analogue of \eqref{eq:IVP} by writing
\begin{equation}\label{eq:W_2-cov-2}
    \dot B_t = -2(B_t - I) = 4B_t \frac{1}{2}(B^{-1}_t - I) = 4B_t \left(-\nabla \sH(\gamma_{B_t}\|\gamma)\right).
\end{equation}
This suggests that the $\W_2$ gradient flow for $\KL$ converges faster than its $\IGW$ analogue when both flows are `close' to equilibrium since $\|\dot B_t\|_{\F} \approx 4 \|\dot A_t\|_{\F},$ where $\|\cdot\|_{\F}$ denotes the Frobenius norm on $\R^{d \times d}.$ Indeed, consider $I + \varepsilon Z$ for some $Z \in \mathbb S^d_{++}.$ Then $-2\left(I + \varepsilon Z - I\right) = -2\varepsilon Z$, while $\frac{1}{2}\big((I + \varepsilon Z)^{-1} - I\big) = \frac12\left(-\varepsilon Z + o(\varepsilon)\right)$.
\end{remark}

\subsection{SDE representation}

Given the linear FPE \eqref{eq:linear-FP}, we now show that the IGW gradient flow $(\rho_t)_{t \in [0,\infty)}$ agrees with the time-marginal flow of a certain linear SDE.

\begin{theorem}[Linear SDE representation]\label{thm:lin-SDE}
Let $(A_t)_{t \in [0,\infty)}$ be the solution of the matrix ODE \eqref{eq:IVP}. The IGW gradient flow $(\rho_t)_{t \in [0,\infty)}$ agrees with the unique time-marginal flow of the linear SDE
\begin{equation}\label{eq:linear-SDE}
    dX_t = -\frac14\big(A_t^{-1}+A_t^{-2}\big)X_t\,dt + A_t^{-1/2}\,dW_t, \quad
    X_0\sim \rho_0,
\end{equation}
where $(W_t)_{t \in [0,\infty)}$ is a standard Brownian motion in $\R^d$ independent of $X_0$. That is, $\rho_t=\mathrm{Law}(X_t)$ for all $t \in [0,\infty)$. 
\end{theorem}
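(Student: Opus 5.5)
The plan is to reduce the statement to \cref{thm:lin-FP}. We show that the marginal law of the SDE \eqref{eq:linear-SDE} is a probability solution of the linear FPE \eqref{eq:linear-FP} with initial condition $\rho_0$. The uniqueness part of \cref{thm:lin-FP} then identifies it with $(\rho_t)_{t\in[0,\infty)}$.

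\textbf{Step 1 (regularity of the coefficients).} Since $A_t=\Sigma_{\rho_t}$, the bounds \eqref{eq: lower bound} give $k_{\rho_0}I\preceq A_t\preceq (4\sH(\rho_0\|\gamma)+2d)I$ for all $t$. Moreover, $t\mapsto A_t$ is continuous: it is absolutely continuous, and by the ODE its derivative is bounded. Hence the coefficients $b(t,x)=-\frac14(A_t^{-1}+A_t^{-2})x$ and $\sigma(t)=A_t^{-1/2}$ have the following properties:
\begin{itemize}
\item they are continuous in $t$;
\item they are uniformly bounded in $t$, and $b$ is linear in $x$, hence globally Lipschitz in $x$ with a constant independent of $t$;
\item $\sigma$ does not depend on $x$.
\end{itemize}
Standard Itô theory then yields a unique strong solution $X$ for any $X_0\sim\rho_0$ independent of $W$. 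In fact the solution is explicit: let $\Phi_t$ solve $\dot\Phi_t=-\frac14(A_t^{-1}+A_t^{-2})\Phi_t$ with $\Phi_0=I$. The matrices $A_s$ commute for all $s$, because the ODE preserves the eigenbasis of $A_0$. Therefore
\[
X_t=\Phi_tX_0+\int_0^t\Phi_t\Phi_s^{-1}A_s^{-1/2}\,dW_s .
\]
This also gives $\sup_{t\le T}\E\|X_t\|^2<\infty$, and the law of $X$ is uniquely determined, so the time-marginal flow is unique.

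\textbf{Step 2 (the marginals solve the FPE).} Fix $g\in C_c^\infty((0,\infty)\times\R^d)$ and apply Itô's formula to $g(t,X_t)$. The generator is
\[
\frac12\tr(A_t^{-1}\nabla^2g)-\frac14\langle (A_t^{-1}+A_t^{-2})x,\nabla g\rangle .
\]
Take expectations. The stochastic integral is a true martingale because its integrand $A_t^{-1/2}\nabla g$ is bounded, and $g$ vanishes near $t=0$ and for large $t$. This gives
\[
\int_0^\infty\!\!\int\bigl(\partial_tg+\tfrac12\tr(A_t^{-1}\nabla^2 g)-\tfrac14\langle (A_t^{-1}+A_t^{-2})x,\nabla g\rangle\bigr)\,d\mu_t\,dt=0
\]
for $\mu_t=\mathrm{Law}(X_t)$. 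This is exactly the distributional form of \eqref{eq:linear-FP} after integrating by parts. Further, $\mu_t\to\rho_0$ weakly as $t\to0$ by continuity of paths, and each $\mu_t$ is a probability measure, hence a subprobability measure.

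\textbf{Step 3 (identification).} By \cref{thm:lin-FP}, $(\rho_t)$ is the unique subprobability solution of \eqref{eq:linear-FP} starting at $\rho_0$. Therefore $\mu_t=\rho_t$ for all $t$.

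The main obstacle is making sure the solution class in \cref{thm:lin-FP} matches what the SDE produces. That uniqueness comes from \cite[Ch.~9]{bogachev2022fokker}, so I would check its precise hypotheses and verify each for $\mu_t$. First, it needs the initial condition taken in the weak sense; continuity of paths gives this. Second, it may need integrability of the drift against $\mu_t\,dt$ locally in time; this follows from the second-moment bound in Step 1. The coefficient conditions, namely locally bounded drift and a nondegenerate diffusion matrix continuous in $t$, hold by Step 1.
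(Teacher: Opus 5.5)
Your proposal is correct and follows the paper's proof essentially step for step: strong well-posedness of the linear SDE from the uniform spectral bounds on $A_t=\Sigma_{\rho_t}$, then Itô's formula on compactly supported test functions to show that the marginals solve \eqref{eq:linear-FP} with initial datum $\rho_0$, then identification via the uniqueness part of \cref{thm:lin-FP}. One minor point: the commutation of the $A_s$ is not needed for your variation-of-constants formula, which holds for any continuous matrix coefficients (cf.\ \cref{cor:closed-form}).
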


The linear SDE directly yields a closed-form formula for $\rho_t$, which we present next, together with a few other results that immediately follow from that formula.

\begin{corollary}[Closed-form expression]\label{cor:closed-form}
    Let $(X_t)_{t \in [0,\infty)}$ denote the unique strong solution to the SDE \eqref{eq:linear-SDE}. Then, for all $t \in [0,\infty)$,
    \[
    X_t = \Phi_t\left(X_0 + \int^t_{0} \Phi_s^{-1}A_s^{-1/2}d W_s \right),
    \]
    where $\Phi_t$ is nonsingular and the unique solution to the matrix ODE
    \[ 
    \dot \Phi_t = -\frac14(A_t^{-2}+A_t^{-1})\Phi_t,
    \qquad
    \Phi_0=I.
    \]
    Consequently, $\rho_t =  \big((\Phi_{t})_{\#}\rho_0\big) *\gamma_{Q_t}$
where $*$ denotes convolution and
\begin{equation}\label{eq:Q_t}
    Q_t =\int_0^t \Phi_t\Phi_s^{-1}A_s^{-1}\left(\Phi_s^{-1}\right)^{\intercal}\Phi_t^\intercal\, ds.
\end{equation}
\end{corollary}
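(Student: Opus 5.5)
The plan is to solve the linear SDE \eqref{eq:linear-SDE} by variation of constants and then read off the law of $X_t$. First I will record the regularity of the coefficients. By \cref{thm:lin-FP}, $A_t=\Sigma_{\rho_t}$, and \eqref{eq: lower bound} gives
\[
k_{\rho_0}\le \lambda_{\min}(A_t)\le\lambda_{\max}(A_t)\le 4\sH(\rho_0\|\gamma)+2d
\]
for all $t$. Consequently $t\mapsto A_t^{-1},A_t^{-2},A_t^{-1/2}$ are continuous (indeed $A$ is absolutely continuous and the matrix functions are smooth on $\mathbb{S}^d_{++}$) and uniformly bounded.

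Next I will treat the deterministic part. The linear matrix ODE $\dot\Phi_t=-\frac14(A_t^{-2}+A_t^{-1})\Phi_t$, $\Phi_0=I$, has bounded continuous coefficients, so Carathéodory/Picard theory gives a unique global absolutely continuous solution. Nonsingularity follows from Liouville's formula:
\[
\det\Phi_t=\exp\!\Big(-\tfrac14\int_0^t\tr(A_s^{-2}+A_s^{-1})\,ds\Big)>0 .
\]
One can also check it directly: $\Psi_t$ solving $\dot\Psi_t=\frac14\Psi_t(A_t^{-2}+A_t^{-1})$, $\Psi_0=I$, satisfies $\frac{d}{dt}(\Psi_t\Phi_t)=0$, so $\Psi_t=\Phi_t^{-1}$. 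In particular $s\mapsto\Phi_s^{-1}$ is continuous and bounded on compacts.

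Then I will verify the formula. The SDE has globally Lipschitz (linear) coefficients, so it has a unique strong solution. Set $Y_t=\Phi_t^{-1}X_t$ and apply Itô's product rule; since $\Phi^{-1}$ has finite variation there is no covariation term. This gives
\[
dY_t=\tfrac{d}{dt}(\Phi_t^{-1})X_t\,dt+\Phi_t^{-1}dX_t=\Phi_t^{-1}A_t^{-1/2}dW_t,
\]
because $\frac{d}{dt}\Phi_t^{-1}=\frac14\Phi_t^{-1}(A_t^{-2}+A_t^{-1})$ exactly cancels the drift. Integrating yields the stated formula. Conversely, the same computation shows that the displayed process solves the SDE, and uniqueness identifies it with $X_t$.

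For the law, write $X_t=\Phi_tX_0+G_t$ with $G_t=\int_0^t\Phi_t\Phi_s^{-1}A_s^{-1/2}dW_s$. This is a Wiener integral of a deterministic square-integrable integrand, hence centered Gaussian. By the Itô isometry its covariance is $\int_0^t\Phi_t\Phi_s^{-1}A_s^{-1}(\Phi_s^{-1})^\intercal\Phi_t^\intercal ds=Q_t$, using that $A_s^{-1/2}$ is symmetric. Since $G_t$ is $\sigma(W)$-measurable and independent of $X_0$, $\mathrm{Law}(X_t)=((\Phi_t)_\#\rho_0)*\gamma_{Q_t}$. Finally \cref{thm:lin-SDE} gives $\rho_t=\mathrm{Law}(X_t)$.

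The only mildly delicate step is justifying the Itô product rule and the ODE solution when $A$ is merely absolutely continuous rather than $C^1$. This is handled by the continuity of $A$ (which makes the coefficients continuous and $\Phi$ genuinely $C^1$), so I expect no real obstacle.
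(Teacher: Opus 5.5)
Your proposal is correct and takes the same route as the paper, which proves this by citing the variation-of-constants formula for linear SDEs in Section 5.6 of Karatzas--Shreve. You spell out what that reference supplies: the fundamental matrix $\Phi_t$ with nonsingularity via Liouville's formula, the It\^o product rule for $\Phi_t^{-1}X_t$, the It\^o isometry for $Q_t$, and independence of $X_0$ and $W$ for the convolution, with \cref{thm:lin-SDE} then identifying $\rho_t=\mathrm{Law}(X_t)$.
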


\begin{corollary}[Gaussianity]\label{cor:gauss}
     If $\rho_0$ is Gaussian, then $\rho_t$ is Gaussian for every
    $t \geq  0$ with mean vector $m_t$ and second moment matrix $\Sigma_t$ uniquely determined by the system of ODEs,
   \[
        \begin{cases}
        \dot m_{t} = -\frac{1}{4}\left(\Sigma^{-1}_{t} + \Sigma^{-2}_{t}\right)m_{t}, \\
        \dot \Sigma_{t}\, = \frac{1}{2}(\Sigma^{-1}_{t} - I),
        \end{cases}
        \]
        with $m_0 = m_{\rho_0}$ and $\Sigma_0 = \Sigma_{\rho_0}$.
\end{corollary}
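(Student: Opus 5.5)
The plan is to read off Gaussianity from \cref{cor:closed-form} and then identify the moment ODEs by taking expectations in the SDE \eqref{eq:linear-SDE}. Suppose $\rho_0=\cN(m_0,C_0)$, where $m_0=m_{\rho_0}$ and $C_0=\Sigma_{\rho_0}-m_0m_0^\intercal$. Since $\Sigma_{\rho_0}$ is nonsingular and $\sH(\rho_0\|\gamma)<\infty$, the measure $\rho_0$ is a nondegenerate Gaussian.

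By \cref{cor:closed-form}, $\rho_t=((\Phi_t)_\#\rho_0)*\gamma_{Q_t}$. The pushforward of $\cN(m_0,C_0)$ under the linear map $\Phi_t$ is $\cN(\Phi_tm_0,\Phi_tC_0\Phi_t^\intercal)$. Convolving with the centered Gaussian $\gamma_{Q_t}$ gives
\[
\rho_t=\cN\bigl(\Phi_tm_0,\ \Phi_tC_0\Phi_t^\intercal+Q_t\bigr),
\]
so $\rho_t$ is Gaussian for every $t\ge0$. This also yields $m_t=\Phi_tm_0$. Differentiating and using the ODE for $\Phi_t$ from \cref{cor:closed-form} gives
\[
\dot m_t=-\tfrac14(A_t^{-1}+A_t^{-2})m_t .
\]
By \cref{thm:lin-FP}, $A_t=\Sigma_{\rho_t}=\Sigma_t$, so this is the stated mean equation.

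For the second moment equation, \cref{thm:lin-FP} already states that $\Sigma_t=A_t$ solves $\dot\Sigma_t=\frac12(\Sigma_t^{-1}-I)$ with $\Sigma_0=\Sigma_{\rho_0}$. As a consistency check, Itô's formula applied to $X_tX_t^\intercal$ gives
\[
\dot\Sigma_t=-\tfrac14(A_t^{-1}+A_t^{-2})\Sigma_t-\tfrac14\Sigma_t(A_t^{-1}+A_t^{-2})+A_t^{-1}.
\]
Substituting $\Sigma_t=A_t$ reduces this to $\frac12(A_t^{-1}-I)$, as required. The expectations here are finite because $\rho_t$ is Gaussian, or because $A_t$ has bounded spectrum by \eqref{eq: lower bound}.

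Uniqueness comes in two steps. The $\Sigma$-equation is autonomous, and \cref{thm:lin-FP} says it has a unique $\mathbb{S}^d_{++}$-valued solution; since any solution is continuous with $\Sigma_0\succ0$, it stays positive definite locally, and \cref{thm:lin-FP} then covers it. With $\Sigma_t$ fixed, the $m$-equation is a linear ODE whose coefficients are locally bounded by \eqref{eq: lower bound}, so its solution is unique.

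The only step needing care is justifying differentiation and the exchange with expectation for $m_t$ and $\Sigma_t$. This is resolved by working with the explicit Gaussian formula from \cref{cor:closed-form} together with the absolute continuity of $A_t$ and $\Phi_t$; I do not expect a real obstacle.
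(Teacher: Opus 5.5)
Your proposal is correct and takes essentially the paper's route. The paper simply invokes the linear-SDE theory of \cite{karatzas2014brownian} (Section~5.6): the solution is Gaussian when $X_0$ is, and its first two moments obey linear ODEs. It combines this with $A_t=\Sigma_{\rho_t}$ from \cref{thm:lin-FP}, and you spell this out via the closed form in \cref{cor:closed-form}.

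One tightening concerns uniqueness of the $\Sigma$-equation; you only argue local positive definiteness there. Instead, note that a continuous symmetric solution that is invertible at every time and starts in $\mathbb{S}^d_{++}$ stays in $\mathbb{S}^d_{++}$ for all $t$, since its smallest eigenvalue cannot cross zero. Alternatively, use directly that $X\mapsto\frac12(X^{-1}-I)$ is locally Lipschitz on invertible matrices.
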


\begin{corollary}[Regularity]\label{cor:regularity}
For every $t>0$, the covariance matrix $Q_t$ in \eqref{eq:Q_t} is positive definite.
    Hence $\rho_t$ admits a smooth density w.r.t. Lebesgue measure on $\R^d$.
    Moreover, identifying $\rho_t$ with this density, we have that $(t,x)\mapsto \rho_t(x)$ 
    belongs to $C^{\infty}((0,\infty)\times\R^d).$
\end{corollary}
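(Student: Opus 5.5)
The plan is to work directly from the closed-form representation in \cref{cor:closed-form}, namely $\rho_t = \big((\Phi_t)_{\#}\rho_0\big) * \gamma_{Q_t}$. There are three steps: show $Q_t$ is positive definite for $t>0$, deduce a smooth density for each fixed $t$, and then upgrade to joint smoothness in $(t,x)$.

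\textbf{Positive definiteness of $Q_t$.} Fix $t>0$ and $u \in \R^d \setminus\{0\}$. Then
\[
u^\intercal Q_t u = \int_0^t \bigl\| A_s^{-1/2}\Phi_s^{-\intercal}\Phi_t^\intercal u \bigr\|^2 \, ds .
\]
The integrand is strictly positive for every $s$, because:
\begin{itemize}[leftmargin=*]
\item $\Phi_t$ and $\Phi_s$ are nonsingular, by \cref{cor:closed-form};
\item $A_s \in \mathbb{S}^d_{++}$, by \cref{thm:lin-FP}.
\end{itemize}
The integrand is also continuous in $s$. Indeed, $A_s = \Sigma_{\rho_s}$ is continuous and satisfies the uniform spectral bounds \eqref{eq: lower bound}, so $s\mapsto A_s^{-1/2}$ is continuous, and $\Phi$ is continuous. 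Hence $u^\intercal Q_t u>0$. Quantitatively, $\lambda_{\min}(Q_t) \ge t\, c(t)$, where $c(t)$ is a lower bound built from $\lambda_{\max}(A_s)\le 4\sH(\rho_0\|\gamma)+2d$ and bounds on $\|\Phi_s^{-1}\|$ and $\|\Phi_t\|$. These bounds come from Grönwall applied to $\dot\Phi_s = -\tfrac14(A_s^{-2}+A_s^{-1})\Phi_s$, using that the coefficient is bounded by \eqref{eq: lower bound}.

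\textbf{Smoothness for fixed $t$.} Once $Q_t \succ 0$, the measure $\rho_t$ has the density
\[
\rho_t(x) = \int_{\R^d} \varphi_{Q_t}(x - \Phi_t y)\, d\rho_0(y),
\]
where $\varphi_{Q}$ denotes the centered Gaussian density with covariance $Q$. Every $x$-derivative of $\varphi_{Q_t}$ is a polynomial times a Gaussian, so it is bounded uniformly in its argument. Differentiating under the integral is therefore justified by dominated convergence, since $\rho_0$ is a probability measure. This gives $\rho_t \in C^\infty(\R^d)$.

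\textbf{Joint smoothness in $(t,x)$.} This is the main obstacle, because a priori $A_t$ is only absolutely continuous. I would handle it in three steps.
\begin{itemize}[leftmargin=*]
\item First bootstrap regularity of $A$. The map $A\mapsto \tfrac12(A^{-1}-I)$ is real-analytic on $\mathbb{S}^d_{++}$, and $A_t$ stays in a compact subset of $\mathbb{S}^d_{++}$ by \eqref{eq: lower bound}. Hence the absolutely continuous solution of \eqref{eq:IVP} is actually $C^\infty$ on $[0,\infty)$: the right-hand side is continuous in $t$, so $A\in C^1$, and one iterates.
\item Consequently $\Phi_t$, $\Phi_t^{-1}$ and $Q_t$ are $C^\infty$ in $t$.
\item On any compact set $[t_0,t_1]\subset(0,\infty)$, the matrices $Q_t^{-1}$ are uniformly bounded, by continuity and positive definiteness. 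The mixed derivatives $\partial_t^k\partial_x^\alpha$ of $(t,x,y)\mapsto \varphi_{Q_t}(x-\Phi_t y)$ are then polynomials in $(x, y, \Phi_t^{(j)}, Q_t^{(j)}, Q_t^{-1})$ multiplied by the Gaussian factor. Each such derivative is bounded by $C(1+\|y\|)^{m}$ uniformly in $t\in[t_0,t_1]$ and $x$ in a compact set.
\end{itemize}

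The remaining subtlety is integrability of $(1+\|y\|)^m$ against $\rho_0$, since $\rho_0$ has only finite second moments. To avoid this, I would use the Gaussian factor to absorb the polynomial growth. For bounded $x$ and uniformly nondegenerate $\Phi_t$, the quantity $\|x-\Phi_t y\|$ is comparable to $\|y\|$ for large $y$, so $\sup_z |p(z)|e^{-c\|z\|^2}<\infty$ bounds every term uniformly. Dominated convergence then yields $(t,x)\mapsto\rho_t(x)\in C^\infty((0,\infty)\times\R^d)$.
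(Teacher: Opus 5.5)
Your proposal is correct and follows essentially the same route as the paper: the Gaussian-mixture formula for $\rho_t$ from \cref{cor:closed-form}, smoothness of $t\mapsto A_t$ bootstrapped from the ODE \eqref{eq:IVP}, hence smoothness of $\Phi_t$ and $Q_t$, and then differentiation under the integral. You also supply details the paper leaves implicit, namely the proof that $Q_t\succ 0$ via $u^\intercal Q_t u=\int_0^t\|A_s^{-1/2}\Phi_s^{-\intercal}\Phi_t^\intercal u\|^2\,ds$, and the domination argument in which the Gaussian factor absorbs polynomial growth in $y$ (needed because $\rho_0$ only has second moments); the one slip is cosmetic, since your optional lower bound on $\lambda_{\min}(Q_t)$ requires upper bounds on $\|\Phi_s\|$ and $\|\Phi_t^{-1}\|$ rather than on $\|\Phi_s^{-1}\|$ and $\|\Phi_t\|$, and Gr\"onwall supplies these just as readily.
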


\begin{remark}[OU comparison]
Although the SDE \eqref{eq:linear-SDE} resembles the OU process, it is not, in general, a deterministic time change of the latter. To see this, consider the one-dimensional case with an admissible initial distribution $\rho_0$ having both mean and variance equal to $1.$ For the OU process,
\[
dY_t=-\frac12 Y_t\,dt+dW_t,
    \qquad Y_0\sim \rho_0,
    \]
    and the variance, $\sigma_t,$ satisfies $\sigma_t \equiv 1$ for all $t.$ For the process \eqref{eq:linear-SDE}, it follows from \eqref{eq:IVP} that the variance $s_t$ evolves according to 
    \[\dot s_t = \frac{1}{s_t + m^2_t}  -\frac12\left(\frac{1}{s_t + m^2_t}  + \frac{1}{(s_t + m^2_t)^2}\right)s_t,\]
    where $m_t$ denotes the mean at time $t.$ Now define
\[F(s,m) \coloneqq \frac{1}{s+m^2} -
    \frac12 \left( \frac{1}{s+m^2} + \frac{1}{(s+m^2)^2}\right)s.\]
Since \(s_0=m_0=1\), we have $F(s_0,m_0)=F(1,1)=\frac18>0$. By the continuity of $F$ and of $t\mapsto(s_t,m_t)$, it follows that
\(F(s_t,m_t)>0\) for all sufficiently small $t > 0$. Consequently,
\[
    s_t = s_0 + \int_0^t F(s_r,m_r)\, dr > 1
\]
for all sufficiently small \(t>0\). As such, for the solution $(X_t)_{t \in [0,\infty)}$ to the SDE \eqref{eq:linear-SDE}, there exists $t^\star > 0$ such that $\mathrm{Law}(X_{t^\star}) \neq \mathrm{Law}(Y_s)$, for all $s \geq 0$. Hence, the process \eqref{eq:linear-SDE} cannot, in general, be obtained as a deterministic time change of the OU process.
\end{remark}

\subsection{Exponential convergence}

Recall from \cref{rem:grad_flow} that the matrix ODE \eqref{eq:IVP} can be interpreted as a (Euclidean) gradient flow for the functional $\Psi(A) =
\frac{1}{2}\left(\tr(A)-\log\det(A)\right)$, which has the unique minimizer $I$. Classical theory for Euclidean gradient flows (cf. \cite{santambrogio2017euclidean}) yields that $A_t$ converges exponentially fast to $I$. Given that, a standard synchronous coupling argument, which couples the SDE \eqref{eq:linear-SDE} with the OU process, then gives exponential convergence of $\rho_t$ towards $\gamma$ in $\W_2$.

The next theorem establishes exponential convergence of $\rho_t$ in relative entropy, which is stronger than $\W_2$-convergence, in view of Talagrand's $T_2$ inequality \cite[Theorem 1.1]{talagrand1996transportation}.

\begin{theorem}[Exponential convergence in relative entropy]
\label{thm:exp_conv}
    The IGW gradient flow $(\rho_t)_{t \in [0,\infty)}$ converges to $\gamma$ exponentially fast in relative entropy, i.e., for all $t \in [0,\infty)$,
    \[
 \sH(\rho_t\|\gamma)
    \le
    \sH(\rho_0\|\gamma)
    e^{-\frac{t}{2\max\{\lambda_{\max}(\Sigma_{\rho_0}),1\}}}.
    \]
\end{theorem}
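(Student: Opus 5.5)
The plan is to differentiate $t\mapsto \sH(\rho_t\|\gamma)$ along the linear FPE \eqref{eq:linear-FP} and combine the result with the Gaussian log-Sobolev inequality. Write $f_t = \rho_t/\gamma$ and $A_t=\Sigma_{\rho_t}$ (by \cref{thm:lin-FP}). The drift–diffusion operator can be rewritten as
\[
\frac12 A_t^{-1}\nabla\rho_t + \frac14(A_t^{-1}+A_t^{-2})x\rho_t
= \frac12 A_t^{-1}\rho_t\,\nabla\log f_t + \frac14(A_t^{-2}-A_t^{-1})x\rho_t ,
\]
using $\nabla\log\gamma=-x$. Formally, then,
\[
\frac{d}{dt}\sH(\rho_t\|\gamma) = -\frac12\int \langle \nabla\log f_t, A_t^{-1}\nabla \log f_t\rangle\, d\rho_t - \frac14\int\langle \nabla \log f_t,(A_t^{-2}-A_t^{-1})x\rangle\, d\rho_t .
\]

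For the second term, integrate by parts using $\rho_t\nabla\log f_t = \nabla\rho_t + x\rho_t$. For a symmetric matrix $M$, $\int\langle\nabla\rho_t+x\rho_t, Mx\rangle = -\tr M + \tr(M\Sigma_{\rho_t})$. With $M=A_t^{-2}-A_t^{-1}$ and $\Sigma_{\rho_t}=A_t$, this equals
\[
-\tr(A_t^{-2}-A_t^{-1}) + \tr(A_t^{-1}-I) = -\tr\bigl((A_t^{-1}-I)^2\bigr)\le 0,
\]
so the second term contributes $+\frac14\tr((A_t^{-1}-I)^2)$. This sign is unfavourable, so a direct estimate will not close. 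The cleaner route is to decompose $\sH(\rho_t\|\gamma)=\sH(\rho_t\|\gamma_{A_t}) + \sH(\gamma_{A_t}\|\gamma)$. This identity holds because $\log(\gamma_{A_t}/\gamma)$ is quadratic and $\rho_t$ has second moment $A_t$, so the cross term depends only on $A_t$. (If $\rho_t$ is not centered, the same identity holds with $\gamma_{A_t}$ still the centered Gaussian.) The FPE \eqref{eq:linear-FP} is then exactly the time-inhomogeneous Fokker--Planck equation whose instantaneous stationary measure is $\gamma_{A_t}$, since the drift is $\frac14(A_t^{-1}+A_t^{-2})x$ and the diffusion is $\frac12A_t^{-1}$. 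For fixed $A$ the stationary Gaussian $\gamma_S$ solves $A^{-1}S^{-1}=\frac12(A^{-1}+A^{-2})$, i.e. $S=2A(I+A^{-1})^{-1}\cdot\frac12$; the precise stationary covariance needs checking, but at equilibrium $S=A$ only when $A=I$. I would therefore instead use the following plan.

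\textbf{Working plan.} First, by \cref{cor:regularity}, $\rho_t$ is smooth and positive for $t>0$. Justify the entropy derivative formula by a truncation argument, or via the closed form in \cref{cor:closed-form}, which gives Gaussian tails. This justification is the main technical obstacle. Second, bound the dissipation using $\lambda_{\min}(A_t^{-1})\ge 1/\max\{\lambda_{\max}(\Sigma_{\rho_0}),1\}=:1/m$. This follows from the ODE \eqref{eq:IVP}: the eigenvalues move monotonically toward $1$, so $\lambda_{\max}(A_t)\le m$. Combining the first term with the integration-by-parts term, the derivative is
\[
-\frac12\int\langle\nabla\log f_t,A_t^{-1}\nabla\log f_t\rangle d\rho_t+\frac14\tr\bigl((A_t^{-1}-I)^2\bigr).
\]
I expect the Gaussian-part correction to be absorbed by writing the Fisher information as $\int\|\nabla\log(\rho_t/\gamma_{A_t})\|^2$ plus cross terms, and then using the log-Sobolev inequality for $\gamma$: $\sH(\rho\|\gamma)\le\frac12 I(\rho\|\gamma)$. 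The goal is the differential inequality $\frac{d}{dt}\sH(\rho_t\|\gamma)\le -\frac1{2m}\sH(\rho_t\|\gamma)$, from which Grönwall's inequality gives the claim.

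If the cross terms resist, the fallback is to handle the two pieces of the decomposition separately. The Gaussian piece $\sH(\gamma_{A_t}\|\gamma)=\Psi(A_t)-d/2$ decays along \eqref{eq:KL_flow} at rate $\|\nabla\Psi\|_{\F}^2\ge \frac1{2m}(\Psi-\frac d2)$, checked eigenvalue-wise from $\frac14(a^{-1}-1)^2\ge \frac{1}{4m}(a-\log a-1)$. The non-Gaussian piece would be treated via the log-Sobolev inequality for $\gamma_{A_t}$, with constant $\lambda_{\max}(A_t)\le m$.
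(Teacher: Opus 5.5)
Your dissipation identity is correct and agrees with the paper's. With $w_t=\nabla\log(\rho_t/\gamma)$,
\[
\frac{d}{dt}\sH(\rho_t\|\gamma)=-\bigl\langle(\cL_t|_{\cI_t})^{-1}[w_t],w_t\bigr\rangle_{L^2(\rho_t;\R^d)}=-\frac12\int\langle w_t,A_t^{-1}w_t\rangle\,d\rho_t+\frac14\tr\bigl((A_t^{-1}-I)^2\bigr).
\]
Your working plan also points the right way. But you leave unverified exactly the step that carries the proof: you only ``expect'' the positive correction to be absorbed, after asserting that a direct estimate will not close. It does close, and the missing idea is an exact orthogonal splitting. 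Set $u_t=\nabla\log(\rho_t/\gamma_{A_t})$ and $C_t=I-A_t^{-1}$, so that $w_t=u_t+C_tx$. Since $\Sigma_{\rho_t}=A_t$, integration by parts gives $\int u_t(x)x^\intercal\,d\rho_t(x)=-I+A_t^{-1}\Sigma_{\rho_t}=0$, so every cross term vanishes. Since $C_t$ also commutes with $A_t$,
\[
-\frac{d}{dt}\sH(\rho_t\|\gamma)=\frac12\int\langle u_t,A_t^{-1}u_t\rangle\,d\rho_t+\frac14\tr(C_t^2),\qquad \sI(\rho_t\|\gamma)=\int\|u_t\|^2\,d\rho_t+\tr(A_tC_t^2).
\]
Let $m=\max\{\lambda_{\max}(\Sigma_{\rho_0}),1\}\ge\lambda_{\max}(A_t)$. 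Because $\frac12\int\langle u_t,A_t^{-1}u_t\rangle\,d\rho_t\ge\frac1{2m}\int\|u_t\|^2\,d\rho_t$ and $\tr(A_tC_t^2)\le m\tr(C_t^2)$, this yields $-\frac{d}{dt}\sH(\rho_t\|\gamma)\ge\frac1{4m}\sI(\rho_t\|\gamma)$. The log-Sobolev inequality for $\gamma$ itself then gives the rate $\frac1{2m}$. The linear component loses exactly half its dissipation to the correction term, which is where $\frac1{4m}$ comes from. This is the concrete content of the paper's argument: the spectrum of $\cL_t|_{\cI_t}$ lies in $[2\lambda_{\min}(\Sigma_t),4\lambda_{\max}(\Sigma_t)]$, which yields $\langle(\cL_t|_{\cI_t})^{-1}[w_t],w_t\rangle\ge\frac{1}{4\lambda_{\max}(\Sigma_t)}\|w_t\|^2_{L^2(\rho_t;\R^d)}$ without computing the correction at all.

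Your fallback cannot substitute for this step. The same splitting gives $\frac{d}{dt}\sH(\rho_t\|\gamma_{A_t})=-\frac12\int\langle u_t,A_t^{-1}u_t\rangle\,d\rho_t$. The log-Sobolev inequality for $\gamma_{A_t}$ (constant $\lambda_{\max}(A_t)$) then only yields decay at rate $\lambda_{\max}(A_t)^{-2}$, which can be as small as $m^{-2}<\frac1{2m}$ when $m>2$. This loss is genuine, not an artifact of the estimates. Take $d=1$ and Gaussian $\rho_0$. Then \cref{cor:gauss} gives $\rho_t=\cN(\mu_t,a_t-\mu_t^2)$ with $\dot\mu_t=-\frac14(a_t^{-1}+a_t^{-2})\mu_t$ and $\dot a_t=\frac12(a_t^{-1}-1)$. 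Hence $\sH(\rho_t\|\gamma_{a_t})=-\frac12\log(1-\mu_t^2/a_t)$ and $\frac{d}{dt}(\mu_t^2/a_t)=-\mu_t^2/a_t^3$, so for small $\mu_t^2/a_t$ this piece decays only at rate about $a_t^{-2}$. Bounding the two pieces separately therefore cannot produce the stated constant. They must be coupled through the log-Sobolev inequality for $\gamma$, where the term $\tr(A_tC_t^2)$ in the Fisher information compensates.

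A smaller point concerns the regularity step. The closed form in \cref{cor:closed-form} does not give Gaussian tails, since the convolution inherits the tails of $\rho_0$, which only has a finite second moment. The paper instead justifies the chain rule via Proposition 6.3(v) of \cite{ambrosio2007gradient}. That requires finiteness of $\int_0^T\|v_t\|^2_{L^2(\rho_t;\R^d)}\,dt$ (\cref{lem:kin_en}) and of $\int_0^T\sI(\rho_t\|\gamma)\,dt=\int_0^T\|\cL_t[v_t]\|^2_{L^2(\rho_t;\R^d)}\,dt$.
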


The proof hinges on Proposition 6.3 of \cite{ambrosio2007gradient}. Consider the relative Fisher information 
\begin{equation}
\sI(\rho \| \gamma) \coloneqq \int_{\R^d} \left \| \nabla \log \left ( \frac{d\rho}{d\gamma} \right )\right \|^2  \, d\rho.
\label{eq: fisher}
\end{equation}
First, we show that the kinetic energy, $\int^T_0 \|v_t\|^2_{L^2(\rho_t)} dt$, and the integrated relative Fisher information, $\int^T_0 \sI(\rho_t\|\gamma) dt$, are finite on any bounded interval. This implies that $t \mapsto \sH(\rho_t\|\gamma)$ is absolutely continuous. Then, the closed-form expression for the velocity field $v_t$ obtained in \cref{thm:nonlin-FP} is combined with the Gaussian log-Sobolev inequality, and the desired convergence follows by applying Gr\"onwall's lemma.

\section{Proofs for \cref{sec: construction}}
 Throughout, we shall identify $\pi$ with its $(y,z)$-marginal when the integrand is independent of $x.$
\label{sec: proof construction}
\subsection{Proof of \cref{lem:local_convexity}}\label{sec:pf:local_conv}
Observe that
\[
\sH(\rho \| \gamma) = \int \rho(x) \log \rho (x) \, dx + \frac{1}{2}\int \| x \|^2 \, d\rho(x) + \frac{d}{2}\log (2\pi).
\]
Section 10.1 in \cite{zhang2024gradient} shows that 
the entropy functional, $\rho \mapsto \int \rho(x) \log \rho (x) \, dx$, is convex along modified generalized geodesics. So, to show that $\KL$ is locally convex,
it suffices to establish the local convexity of
\begin{equation}\label{eq:variance-functional}
\sV(\rho)\coloneqq \int \|x\|^2\, d\rho(x).
\end{equation}
Recall $\nu_t  = (T_t)_{\#}\mu_0.$ We may expand
\begin{equation}\label{eq:At-expansion}
\big((1-t)A_1^{-1} + t A_2^{-1}\big)\big((1-t)A_1 y+tA_2 z\big)
= y + t(A_1^{-1}A_2 z + A_2^{-1}A_1 y - 2y) +O(t^2).\notag
\end{equation}
For simplicity, denote $B\coloneqq A_1^{-1}A_2.$ We now obtain
\begin{align}
\sV(\nu_t)
&=\int \|x\|^2 \, d(T_t)_{\#}\mu_0 \notag \\
&= \int \left\| y + t\bigl(B z + B^{-1}y - 2y\bigr) + O(t^2) \right\|^2
\, d\pi(x,y,z) \notag \\
&= \int \left\| y + t\bigl(B z+ B^{-1}y - 2y\bigr) + O(t^2) \right\|^2
\, d\pi(y,z) \notag \\
&= \int \Bigl\| y + t(z-y) - t(z-y)
+ t\bigl(B z+ B^{-1}y - 2y\bigr) + O(t^2) \Bigr\|^2 \, d\pi(y,z) \notag \\
&= \int \Bigl\| t z + (1-t)y
- t(z-y)
+ t\bigl(B z+ B^{-1}y - 2y\bigr)
+ O(t^2) \Bigr\|^2 \, d\pi(y,z).
\label{eq:add-subtract}
\end{align}
Applying the convexity identity to the squared norm in \eqref{eq:add-subtract}, we get
\begin{align}
\sV(\nu_t)
&= (1-t)\int
\Bigl\| y - t(z-y)
+ t\bigl(B z+ B^{-1}y - 2y\bigr)
+ O(t^2) \Bigr\|^2 \, d\pi(y,z) \notag\\
&\quad
+ t\int
\Bigl\| z - t(z-y)
+ t\bigl(B z+ B^{-1}y - 2y\bigr)
+ O(t^2) \Bigr\|^2 \, d\pi(y,z) \notag\\
&\quad
- t(1-t)\int \|y - z\|^2 \, d\pi(y,z). \notag
\end{align}
Therefore,
\begin{equation}
\label{eq:I1-I2-decomposition}
\begin{split}
\sV(\nu_t)
&\leq \underbrace{(1-t)\int
\Bigl\| y - t(z-y)
+ t\bigl(B z+ B^{-1}y - 2y\bigr)
+ O(t^2) \Bigr\|^2 \, d\pi(y,z)}_{=: I_1} \\
&\quad
+ \underbrace{t\int
\Bigl\| z - t(z-y)
+ t\bigl(B z+ B^{-1}y - 2y\bigr)
+ O(t^2) \Bigr\|^2 \, d\pi(y,z)}_{=: I_2}
+ O(t^2). 
\end{split}
\end{equation}

We next estimate $I_1$ and $I_2$. Rearranging the terms in $I_1$ and expanding the squared norm gives
\begin{align}
I_1
&= (1-t)\int
\Bigl\| y
+ t\left((B - I) z+ (B^{-1} - I)y \right)
+ O(t^2) \Bigr\|^2 \, d\pi(y,z) \notag\\
&= (1-t)\sV(\mu_1)
 + 2(1-t)\int \left\langle
 y,\; t\bigl((B-I)z + (B^{-1}-I)y\bigr)
 \right\rangle d\pi(y,z)
 + O(t^2) \notag \\
&= (1-t)\sV(\mu_1)
 + 2\int \left\langle
 y,\; t\bigl((B-I)z + (B^{-1}-I)y\bigr)
 \right\rangle d\pi(y,z)
 + O(t^2). 
 \label{eq:I1-bound}
\end{align}
Likewise, we have
\begin{equation}\label{eq:I2-bound}
I_2 \le t\,\sV(\mu_2) + O(t^2).
\end{equation}
Substituting \eqref{eq:I1-bound} and \eqref{eq:I2-bound} into
\eqref{eq:I1-I2-decomposition}, we have
\begin{equation}
\label{eq:variance-before-cross-bound}
\sV(\nu_t) 
\leq (1-t)\sV(\mu_1) + t\,\sV(\mu_2)
 + 2\int \left\langle
 y,\; t\bigl((B-I)z + (B^{-1}-I)y\bigr)
 \right\rangle d\pi(y,z) + O(t^2). 
\end{equation}

It remains to bound the last integral in \eqref{eq:variance-before-cross-bound}. We first rewrite it as
\begin{equation}
\label{eq:cross-pre-substitution}
\begin{split}
&2\int \left\langle y,\; t\bigl((B-I)z + (B^{-1}-I)y\bigr) \right\rangle d\pi(y,z) \\
&= 2\int \left\langle y,\; t\bigl((B-I)z + (B-I)y - (B-I)y + (B^{-1}-I)y\bigr) \right\rangle d\pi(y,z) \\
&= 2t\int \left\langle y,\; (B-I)(z-y) + (B + B^{-1} - 2I)y \right\rangle d\pi(y,z) \\
&= 2t\int \left\langle y,\; (B-I)(z-y) + B^{-1}(B-I)^2 y \right\rangle d\pi(y,z) \\
&= 2t\left(\int \langle y,(B-I)(z-y)\rangle d\pi(y,z)
   + \int \langle y, B^{-1}(B-I)^2 y\rangle d\pi(y,z)\right)\\
&\le 2t\left(\int \|y\|\,\|B-I\|_{\text{op}}\,\|z-y\|\, d\pi(y,z)
+ \int \|B^{-1}\|_{\text{op}}\,\|B-I\|_{\text{op}}^2\,\|y\|^2\, d\mu_1(y)\right) \\
&\le 2t\Bigg(\|B-I\|_{\text{op}}\,M_2^{1/2}(\mu_1)\left(\int \|z-y\|^2 d\pi(y,z)\right)^{1/2} + \|B^{-1}\|_{\text{op}}\,\|B-I\|_{\text{op}}^2 M_2(\mu_1)\Bigg). 
\end{split}
\end{equation}
Denoting by $\|\cdot\|_{\op}$ the operator norm on $\R^d,$ we recall the following bounds from Section 10.1 of \cite{zhang2024gradient}:
\begin{align}
\|B^{-1}\|_{\mathrm{op}}
&\leq c_{2,\pi}^2, \label{eq:B-op-bound}\\
\|B - I\|_{\text{op}}
&\leq \frac{c_{2,\pi}}{2}
    \left(M_2(\mu_0)
    \int \|z - y\|^2 \, d\pi(y,z)\right)^{1/2}
, \label{eq:B-minus-I-bound}\\
\int \|z - y\|^2 \, d\pi(y,z)
&\leq \frac{2\sqrt{2}}{c_{1,\mu_0}}
\left(2 \int\left|\langle y, y' \rangle -\langle z, z' \rangle\right|^2 \, d\pi \otimes \pi+ 3 \,\IGW^2(\mu_1,\mu_0)
\right). \label{eq:zy-bound}
\end{align}
Substituting \eqref{eq:B-op-bound} and \eqref{eq:B-minus-I-bound} into
\eqref{eq:cross-pre-substitution}, we obtain
\begin{align}
&2 \int
\left\langle
    y,\;
    t\bigl((B - I)z + (B^{-1} - I)y\bigr)
\right\rangle
\, d\pi(y,z) \notag\\
&\leq
2t
\Bigg(
    \frac{c_{2,\pi}}{2}
    M_2^{1/2}(\mu_0)
    M_2^{1/2}(\mu_1)
+\frac{c_{2,\pi}^4}{4}
    M_2(\mu_1)
    M_2(\mu_0)
\Bigg)
\int \|z - y\|^2 \, d\pi(y,z). \label{eq:cross-after-B-substitution}
\end{align}
Substituting \eqref{eq:zy-bound} into \eqref{eq:cross-after-B-substitution}, we get
\begin{align}
&2 \int
\left\langle
    y,\;
    t\bigl((B - I)z + (B^{-1} - I)y\bigr)
\right\rangle
\, d\pi(y,z) \notag\\
&\leq
t\frac{8\sqrt{2}}{c_{1,\mu_0}}
\Big(\frac{c_{2,\pi}}{2}M_2^{1/2}(\mu_0) M_2^{1/2}(\mu_1) + \frac{c_{2,\pi}^4}{4}M_2(\mu_1)M_2(\mu_0)\Big) \notag\\
&\qquad \times
\Bigg(
    \int
    \left| \langle y, y' \rangle - \langle z, z' \rangle \right|^2
    \, d\pi \otimes \pi + 3\, \mathsf{IGW}^2(\mu_1,\mu_0)
\Bigg). \label{eq:cross-final-bound}
\end{align}
Putting \eqref{eq:variance-before-cross-bound} and \eqref{eq:cross-final-bound} together,
and using the definition of $\beta_{\pi}$, we obtain
\begin{align*}
\sV(\nu_t)
&\le (1-t)\sV(\mu_1)+t\sV(\mu_2)
  + t\,\frac{8\sqrt{2}}{c_{1,\mu_0}}\,\beta_{\pi}\,
     \int \bigl|\langle y,y'\rangle-\langle z,z'\rangle\bigr|^2\, d\pi \otimes \pi \\
&\qquad
  + t\,\frac{12\sqrt{2}}{c_{1,\mu_0}}\,\beta_{\pi}\, \IGW^2(\mu_1,\mu_0)
  + O(t^2).
\end{align*}
This proves the desired local convexity. \qed

\subsection{Proof of \cref{lem:spectral-control}}\label{sec:pf:spec_control}
    Recall that $\Sigma_\rho$ denotes the second moment matrix of $\rho$. Let $m_\rho$ and $\tilde{\Sigma}_\rho$ denote the mean and covariance matrix of $\rho$, respectively. Observe that, by Weyl's inequality, $\lambda_{\min}(\Sigma_{\rho}) \geq \lambda_{\min}(\tilde{\Sigma}_\rho)$ and $\lambda_{\max}(\tilde{\Sigma}_\rho) \leq \lambda_{\max}(\Sigma_{\rho})$ (see Corollary 3.2.3 from \cite{bhatia2013matrix}).
    
    We first establish an upper bound on the largest eigenvalue of $\Sigma_\rho$. Recall Talagrand's $T_2$ transport inequality for the standard Gaussian $\gamma = \mathcal{N}(0, I)$ (e.g., Theorem 1.1 in \cite{talagrand1996transportation}),
    \[ 
    \sW_2(\rho, \gamma) \leq \sqrt{2\sH(\rho\|\gamma)} \leq \sqrt{2C}. 
    \]
    Using the  inequality $\|x\|^2 \leq 2\|x-y\|^2 + 2\|y\|^2$ integrated over the optimal transport plan between $\rho$ and $\gamma$, we obtain
    \begin{align*}
        \lambda_{\max}(\Sigma_\rho) &\leq \tr(\Sigma_\rho) = \int_{\mathbb{R}^d} \|x\|^2 d\rho(x) \leq 2\sW^2_2(\rho, \gamma) + 2\int_{\mathbb{R}^d} \|y\|^2 d\gamma(y) \\
        &\leq 2(\sqrt{2C})^2 + 2d = 4C + 2d =: M.
    \end{align*}

    Next, recall that among all distributions with mean $m_\rho$ and covariance $\tilde{\Sigma}_\rho$, the entropy functional $\rho \mapsto \int \rho(x) \log \rho (x) \, dx$ is minimized at $\rho = \cN(m_\rho,\tilde{\Sigma}_\rho)$. This implies that
    \begin{align*}
        C \geq \sH(\rho\|\gamma) &\geq \sH(\mathcal{N}(m_{\rho}, \tilde{\Sigma}_\rho)\|\gamma) \\
        &= \frac{1}{2}\Bigl(\tr(\tilde{\Sigma}_\rho) + \|m_{\rho}\|^2 - d - \log \det(\tilde{\Sigma}_\rho)\Bigr) \\
        &\geq \frac{1}{2}\Bigl( - d - \log \det(\tilde{\Sigma}_\rho)\Bigr)  \\
        &\geq \frac{1}{2}\Bigl(-d - \log \lambda_{\min}(\tilde{\Sigma}_\rho) - (d - 1)\log M\Bigr),
    \end{align*}
    where the last inequality follows because $\log \det(\tilde{\Sigma}_\rho) \le \log \lambda_{\min}(\tilde{\Sigma}_\rho) + \sum_{i=2}^d \log \lambda_{\max}(\tilde{\Sigma}_\rho) \leq \log \lambda_{\min}(\tilde{\Sigma}_\rho) + (d-1)\log M$. 
    Rearranging the inequality to isolate the smallest eigenvalue, we get
    \[ 
    \log \lambda_{\min}(\tilde{\Sigma}_\rho) \geq -2C - d - (d - 1)\log M. 
    \]
    Exponentiating and substituting $M = 4C + 2d$ yields
    \[ 
    \lambda_{\min}(\tilde{\Sigma}_\rho) \geq \exp\Big(-2C - d - (d - 1)\log(4C + 2d)\Big). 
    \]
    The result follows from the initial comparison $\lambda_{\min}(\Sigma_{\rho}) \geq \lambda_{\min}(\tilde{\Sigma}_\rho)$.
\qed
\subsection{Proof of \cref{thm:main}}\label{sec:construction_proof}
\subsubsection{Proof of \cref{thm:main}~(i)}
Since $\KL$ is rotationally invariant, lower bounded, weakly lower semicontinuous (see Proposition 4.9 in \cite{polyanskiy2025information}), and regular in the sense of  \cite[Definition 10.1.4]{ambrosio2005gradient}, the conclusion of \cref{thm:zz_main1}~(i) holds with $\sF = \sH(\cdot \| \gamma)$. It remains to establish $v_t \in - (\cL_{\rho_t}|_{\cI_{\rho_t}})^{-1}[\partial_\ell\sH(\rho_t\|\gamma)]$.

 In view of the proof of Proposition 4.7 in \cite{zhang2024gradient}, we need to establish an analogue of Proposition 4.6 in \cite{zhang2024gradient} and strengthen the convergence of $\bar \rho^n \to \rho$ from pointwise weak convergence to uniform $\sW_2$-convergence. 

 The proof of Proposition 4.6 in \cite{zhang2024gradient} relies on the preceding Proposition 4.5 concerning a uniform Cauchy estimate for $\bar{\rho}^n$ in $\IGW$, which in turn relies on $\lambda$-convexity of the functional along (modified) generalized geodesics. Due to the lack of $\lambda$-convexity for $\sH(\cdot \| \gamma)$, we need to separately establish such a Cauchy estimate for $\bar \rho^n$, which requires substantial work.
Specifically, for $n,m\in\NN$ with $\tau=\frac{\delta}{n}$ and $\eta=\frac{\delta}{m}$, we consider the piecewise-constant
IGW--JKO interpolants
$\bar\rho^n$ and $\bar\rho^m$ and  will establish below the following uniform Cauchy estimate,
\begin{equation}
\label{eq: cauchy}
\IGW\big(\bar{\rho}^n_t,\bar{\rho}^m_t\big) \lesssim_{\rho_0} \sqrt{\tau} + \sqrt{\eta}
\end{equation}
for sufficiently small $\tau, \eta$. 
 Given the Cauchy estimate \eqref{eq: cauchy}, one may follow the proofs of Propositions 4.6 and 4.7 in \cite{zhang2024gradient} verbatim to obtain $\bar \rho^n \to \rho$ uniformly in $\sW_2$ and conclude $-\cL_{\rho_t} [v_t] \in \partial_{\ell} \sH(\rho_t\|\gamma)$, which leads to the desired claim.\footnote{Propositions 4.6 and 4.7 in \cite{zhang2024gradient} assume $\lambda$-convexity, but the assumption is used only to invoke the uniform Cauchy estimate.}

To establish the estimate \eqref{eq: cauchy}, we introduce a cross-partition error function. Fix $n\in\NN$, and define the piecewise linear function $\ell_\tau:[0,\delta]\to\RR$ by $\ell_\tau(t)\coloneqq \frac{t-(i-1)\tau}{\tau}$, for $i=1,\ldots,n,\ t\in ((i-1)\tau,i\tau],$ and $\ell_\tau(0) \coloneqq 0$. For $\nu\in\cP_2(\RR^d)$, set 
\[
d_\tau(t;\nu)\coloneqq \left((1-\ell_\tau(t))\IGW^2(\rho^\tau_{i-1},\nu) + \ell_\tau(t) \IGW^2(\rho^\tau_i,\nu)\right)^{1/2}, \ \  i=1,\ldots,n,\ t\in((i-1)\tau,i\tau].
\]

\begin{definition}[Cross-partition error function]
    For $n,m\in\NN$ with $\tau=\frac{\delta}{n}$ and $\eta=\frac{\delta}{m}$, define the function $d_{\tau\eta}:[0,\delta]^2\to\RR$ by 
    \begin{equation}
    d_{\tau\eta}(t,s)\coloneqq \left((1-\ell_\eta(s))d^2_\tau(t;\rho^\eta_{j-1}) + \ell_\eta(s)d^2_\tau(t;\rho^\eta_{j})\right)^{1/2},\label{eq:error_func}
    \end{equation}
    for $(t,s)\in((i-1)\tau,i\tau]\times ((j-1)\eta,j\eta]$ and $(i,j)\in\{1,\ldots,n\}\times \{1,\ldots,m\},$ and define the function on the boundary of $[0,\delta]^2$ by replacing the corresponding discrete interpolation by the common initial distribution $\rho_0$.
\end{definition}

By definition,
\begin{align*}
    d^2_{\tau\eta}(t,t) =&(1-\ell_\eta(t))(1-\ell_\tau(t))\IGW^2(\rho^\tau_{i-1},\rho^\eta_{j-1}) + (1-\ell_\eta(t))\ell_\tau(t) \IGW^2(\rho^\tau_i,\rho^\eta_{j-1})\\
    &\quad\quad+  \ell_\eta(t)(1-\ell_\tau(t))\IGW^2(\rho^\tau_{i-1},\rho^\eta_{j}) + \ell_\eta(t)\ell_\tau(t) \IGW^2(\rho^\tau_i,\rho^\eta_{j}),
\end{align*}
for $t\in ((i-1)\tau,i\tau]\cap ((j-1)\eta,j\eta]$, whereas $\IGW(\bar{\rho}^n_t,\bar{\rho}^m_t) = \IGW(\rho^\tau_i,\rho^\eta_{j})$. As such, $d^2_{\tau\eta}(t,t)$ is a convex combination of $\IGW^2$ between $\rho^\tau_{i-1},\rho^\tau_i$ and $\rho^\eta_{j-1},\rho^\eta_{j}$, so that
\begin{equation}
\label{eq: cauchy 1}
\begin{split}
\big|\IGW\big(\bar{\rho}^n_t,\bar{\rho}^m_t\big) - d_{\tau\eta}(t,t)\big|&\lesssim_{\rho_0}  \IGW(\rho^\tau_{i-1},\rho^\tau_i) + \IGW(\rho^\eta_{j-1},\rho^\eta_{j})\\
    & \lesssim_{\rho_0}\sqrt{\tau} + \sqrt{\eta} 
    \end{split}
\end{equation}
where the second inequality follows by Proposition 4.1 in \cite{zhang2024gradient}. It remains to establish a uniform upper bound on $d_{\tau\eta}(t,t)$. What follows is to find an upper bound on the time derivative of $d^2_{\tau\eta}(t,t)$ from the variational inequality in \cref{lem:prox_ineq} below, which, combined with the Gr\"onwall lemma, leads to 
\begin{equation}
\label{eq: cauchy 2}
d_{\tau \eta}(t,t) \lesssim_{\rho_0} \sqrt{\tau} + \sqrt{\eta}
\end{equation}
for sufficiently small $\tau, \eta$. 
The desired Cauchy estimate \eqref{eq: cauchy} follows by combining \eqref{eq: cauchy 1} and \eqref{eq: cauchy 2}.
 The proof of \cref{lem:prox_ineq}, which is deferred to \cref{app:2}, relies on the local convexity of $\sH(\cdot \| \gamma)$, as established in \cref{lem:local_convexity}, combined with that of $\IGW^2$, which appears in  Section 10.1 of \cite{zhang2024gradient} and is restated in \cref{lem:IGW_conv} below.

 \begin{lemma}\label{lem:prox_ineq}
    For any $\mu_1 \in \argmin \sH(\mu \| \gamma) + \frac{1}{2  \tau} \IGW^2(\mu_0,\mu),$ if there exists a modified generalized geodesic (see \cref{def:MGG}), between $\mu_1, \mu_2$ w.r.t. $\mu_0,$ then we have
\begin{align*}
&\sH(\mu_1 \| \gamma) + \frac{1}{2\tau} \, \IGW^2(\mu_1,\mu_0) \\
&\le
\sH(\mu_2 \| \gamma)
- \frac{1}{2\tau}\Bigg(
\left(1 - \frac{8\sqrt{2}}{c_{1,\mu_0}} \, \IGW(\mu_0,\mu_1)\, \alpha_\pi
- 2\tau \frac{8\sqrt{2}}{c_{1,\mu_0}}\,\beta_\pi
\right)\times \\  &\quad\int_{(\R^d \times \R^d)^2} \bigl|\langle y,y'\rangle-\langle z,z'\rangle\bigr|^2\, d\pi_{1,2}\otimes\pi_{1,2}(y,z,y',z')
- 2\tau \frac{12\sqrt{2}}{c_{1,\mu_0}}\,\beta_\pi \,
\IGW^2(\mu_0,\mu_1)\\
&\quad- \IGW^2(\mu_2,\mu_0)
- \frac{12\sqrt{2}}{c_{1,\mu_0}}\,\alpha_\pi \,\IGW^3(\mu_0,\mu_1)\Bigg),
\end{align*}
where $\alpha_\pi = 1+ \frac{c_{2,\pi}^4}{2} M_2(\mu_1)M_2(\mu_0)+ c_{2,\pi} M_2(\mu_1)$, and $c_{1,\mu_0}, c_{2,\pi}$ and $\beta_\pi$ are defined as in \cref{lem:local_convexity}.
\end{lemma}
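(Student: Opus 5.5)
The plan follows the standard Ambrosio--Gigli--Savar\'e variational inequality for minimizers of a proximal problem. The idea is to compare $\mu_1$ with the points $\nu_t$ on the modified generalized geodesic from $\mu_1$ to $\mu_2$ with base $\mu_0$.

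By minimality of $\mu_1$, for every $t\in(0,1]$,
\[
\sH(\mu_1\|\gamma)+\frac{1}{2\tau}\IGW^2(\mu_1,\mu_0)\le \sH(\nu_t\|\gamma)+\frac{1}{2\tau}\IGW^2(\nu_t,\mu_0).
\]
We bound the two terms on the right separately.

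For the entropy term we invoke \cref{lem:local_convexity}. This gives the bound $(1-t)\sH(\mu_1\|\gamma)+t\sH(\mu_2\|\gamma)$, plus $t\frac{8\sqrt2}{c_{1,\mu_0}}\beta_\pi\int|\langle y,y'\rangle-\langle z,z'\rangle|^2\,d\pi_{1,2}\otimes\pi_{1,2}$, plus $t\frac{12\sqrt2}{c_{1,\mu_0}}\beta_\pi\IGW^2(\mu_1,\mu_0)$, plus $O(t^2)$.

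For the distance term we use the (local) convexity of $\IGW^2(\cdot,\mu_0)$ along modified generalized geodesics from Section 10.1 of \cite{zhang2024gradient}, restated as \cref{lem:IGW_conv}. I expect it to take the form
\[
\IGW^2(\nu_t,\mu_0)\le (1-t)\IGW^2(\mu_1,\mu_0)+t\IGW^2(\mu_2,\mu_0)-t\Bigl(1-\tfrac{8\sqrt2}{c_{1,\mu_0}}\alpha_\pi\IGW(\mu_0,\mu_1)\Bigr)\int|\cdots|^2\,d\pi_{1,2}\otimes\pi_{1,2}+t\tfrac{12\sqrt2}{c_{1,\mu_0}}\alpha_\pi\IGW^3(\mu_0,\mu_1)+O(t^2).
\]
Here the error terms arise from the nonlinearity of $T_t$ in $t$. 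They are controlled by the bounds \eqref{eq:B-op-bound}--\eqref{eq:zy-bound}, namely $\|B-I\|_{\op}\lesssim \|z-y\|_{L^2(\pi)}$ and the estimate of $\int\|z-y\|^2d\pi$ by the distortion plus $\IGW^2(\mu_1,\mu_0)$. The constant $\alpha_\pi$ collects the corresponding coefficients.

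Substituting both bounds into the minimality inequality, the $t^0$ terms cancel on both sides. Moving $(1-t)\bigl(\sH(\mu_1\|\gamma)+\frac1{2\tau}\IGW^2(\mu_1,\mu_0)\bigr)$ to the left leaves $t\bigl(\sH(\mu_1\|\gamma)+\frac1{2\tau}\IGW^2(\mu_1,\mu_0)\bigr)$ there. The right-hand side is $t$ times the bracketed expression in the statement, plus $O(t^2)$. We factor $\frac1{2\tau}$ out of the $\beta_\pi$ terms, which produces the factors $2\tau$. Dividing by $t$ and letting $t\downarrow0$ gives the claimed inequality exactly, with the signs and coefficients matching.

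The main obstacle is the distance step. We must make the $\IGW^2$ convexity estimate precise with the explicit constant $\alpha_\pi$, and justify that the $O(t^2)$ remainders are uniform. For the latter I would expand $T_t$ as in the proof of \cref{lem:local_convexity} and use the second-moment bounds for uniformity. For the former, I would bound $\IGW^2(\nu_t,\mu_0)$ by the cost of the coupling $(T_t,\id)_\#\mu_0$. Expanding $\langle T_t(x),T_t(x')\rangle$ to first order in $t$ then produces the cross terms, which are bounded via Cauchy--Schwarz together with the operator-norm bounds on $B$ and $B-I$.
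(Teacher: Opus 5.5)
Your proposal is correct and matches the paper's proof. The paper likewise compares $\mu_1$ with $\nu_t$ via minimality, applies \cref{lem:local_convexity} to the entropy term and \cref{lem:IGW_conv} to $\IGW^2(\cdot,\mu_0)$, then rearranges, divides by $t$, and lets $t\downarrow 0$. Two minor points: the paper simply cites the $\IGW^2$ estimate from Section~10.1 of \cite{zhang2024gradient} instead of re-deriving it, and no uniformity of the $O(t^2)$ remainders is needed, since $\mu_0,\mu_1,\mu_2$ and $\pi$ are fixed throughout.
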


Given \cref{lem:prox_ineq}, the proof of \eqref{eq: cauchy 2} is similar to that of Proposition 4.5 in \cite{zhang2024gradient}. For completeness, we provide the full proof in \cref{app: 3}. 
\qed

\subsubsection{Proof of \cref{thm:main}~(ii)}
Set $k_{\rho_0}$ as in \eqref{eq: lower bound value}. Part (i) of the theorem constructs a gradient flow $\rho^0: [0,\delta_0] \to \cP_2(\R^d)$, starting at $\rho_0$,  with $\delta_0 = \frac{(1-1/\sqrt{2})k_{\rho_0}}{2^{9/4}\sH(\rho_0\|\gamma)}$, which satisfies \eqref{eq:PIDE} with $(0,\infty)$ replaced by $(0,\delta_0)$ for some velocity field $(v_t^0)_{t \in [0,\delta_0]}$. Observe that $\lambda_{\min}(\Sigma_{\rho^0_{\delta_0}}) \ge k_{\rho_0}$ (cf. discussion following \cref{lem:spectral-control}). 

Next, apply Part (i) of the theorem to construct another gradient flow $\rho^1: [\delta_0, 2\delta_0] \to \cP_2(\R^d)$, starting at $\rho^1_{\delta_0} = \rho^0_{\delta_0}$, which satisfies \eqref{eq:PIDE} with $(0,\infty)$ replaced by $(\delta_0,2\delta_0)$ for some velocity field $(v_t^1)_{t \in [\delta_0,2\delta_0]}$. We concatenate $(\rho^0_t,v^0_t)_{t \in [0,\delta_0]}$ and $(\rho^1_t,v^1_t)_{t \in [\delta_0,2\delta_0]}$ as
    \[
        (\hat{\rho}_t, \hat{v}_t) = 
        \begin{cases} 
            (\rho^0_t, v^0_t) & \text{for } t \in [0,\delta_0], \\
            (\rho^1_t, v^1_t) & \text{for } t \in [\delta_0,2\delta_0].
        \end{cases}
    \]
    Since $\hat{\rho}$ is $\W_2$-continuous on $[0, 2\delta_0 ]$ by construction, we conclude by \cite[Lemma 8.1.2]{ambrosio2005gradient} that $(\hat{\rho}_t, \hat{v}_t)$ satisfies \eqref{eq:PIDE} with $(0,\infty)$ replaced by $(0,2\delta_0)$. Repeating this argument gives the desired result. \qed

\section{Proofs for \cref{sec: main}}
\label{sec: proof main}

Throughout, we shall abbreviate $\cL_{\rho_t}$ by $\cL_t$, $\cI_{\rho_t}$ by $\cI_t$, and $\Sigma_{\rho_t}$ by $\Sigma_t$.
\subsection{Proof of \cref{thm:nonlin-FP}}
\label{pf:non-lin-FP}
By \cref{thm:main}, the gradient flow $(\rho_t)_{t \in [0,\infty)}$ satisfies the PIDE (\ref{eq:PIDE}). 
We convert the PIDE into a PDE by obtaining a closed-form expression for
$\partial_\ell \sH(\rho_t\|\gamma)$, and applying
$(\cL_t|_{\cI_{t}})^{-1}$ to it. Define
\[
    w_t \coloneqq -\cL_t[v_t] \in \partial_\ell \sH(\rho_t\|\gamma).
\]
Since $\KL$ satisfies the conditions of Example 11.1.9 in
\cite{ambrosio2005gradient}, and since
$\partial_\ell \sH(\rho_t\|\gamma)$ is nonempty for a.e. $t \in [0,\infty)$
by construction, we have
\[
    \rho_t \in W^{1,1}_{\text{loc}}(\mathbb{R}^d), \qquad \nabla \rho_t = \rho_t(w_t-\id)
\]
for a.e. $t \in [0,\infty)$ (in what follows, we suppress the qualifier
``for a.e. $t$'' for ease of notation). 
In addition, since $w_t \in L^2(\rho_t;\mathbb{R}^d)$ and
$\rho_t \in \cP_2(\R^d)$ by construction, it follows that $\rho_t \in W^{1,1}(\mathbb{R}^d)$. 
Rearranging, we obtain
\[
    w_t = \frac{\nabla\rho_t}{\rho_t}+\id
    \quad \rho_t\text{-a.s.}
\]

We shall now verify that the actions of $(\cL_t|_{\cI_{t}})^{-1}$ on $\frac{\nabla \rho_t}{\rho_t}$ and $\id$ are well-defined and find the explicit expression for
\begin{equation}
\label{eq:inverse_1}
    v_t(x) 
    = -(\cL_t|_{\cI_{t}})^{-1}\left[\frac{\nabla\rho_t}{\rho_t}\right](x) - (\cL_t|_{\cI_{t}})^{-1}\left[\id\right](x).
\end{equation}

First, since $\rho_t \in \cP_2(\R^d)$ and $\int_{\RR^d}x\id(x)^\intercal \, d\rho_t(x) = \int_{\RR^d}xx^\intercal \, d\rho_t(x)$, which is symmetric, we conclude that $\id \in \cI_{t}$, so that $(\cL_t|_{\cI_{t}})^{-1}\left[\id\right]$ is well-defined; see \cref{prop:fredholm}~(ii). Define the mapping
$
S_t: x \mapsto  \frac14 \Sigma_t^{-1}x,
$
which is in $L^2(\rho_t;\RR^d)$. Since 
\[
\int_{\R^d}x S_t(x)^{\intercal} \, d\rho_t(x) = \frac{1}{4} \left (\int_{\R^d}xx^\intercal \, d\rho_t(x) \right)\Sigma^{-1}_t = \frac{1}{4} I,
\]
we have $S_t \in \cI_{t}$. 
Observe that $\cL_t[S_t](x) = \frac{1}{2}x + \frac{1}{2}x = x$,
which implies that
\begin{equation}\label{eq:inv_2}
    (\cL_t|_{\cI_{t}})^{-1}\left[\id\right](x) = \frac{1}{4} \Sigma^{-1}_t x.
\end{equation}

Second, we shall verify that $(\cL_t|_{\cI_{t}})^{-1}\left[\frac{\nabla\rho_t}{\rho_t}\right](x)$ is well-defined. Again, since $w_t \in L^2(\rho_t;\RR^d)$ and $\rho_t \in \cP_2(\R^d)$ by construction, we see that $x \mapsto \frac{\nabla\rho_t}{\rho_t}(x)$ is in $L^2(\rho_t;\RR^d)$. It remains to verify that 
\[
\int_{\R^d}y \frac{\nabla\rho_t(y)^\intercal}{\rho_t(y)} \,d \rho_t(y) = \int_{\R^d}y \nabla\rho_t(y)^\intercal \,d y = \left( \int_{\RR^d} y_i \partial_j\rho_t(y) \,d y\right)^d_{i,j = 1}
\] 
is integrable and symmetric.

We first verify integrability. Since $\nabla\rho_t=\rho_t(w_t-\id), w_t\in L^2(\rho_t;\mathbb R^d)$, and $\rho_t\in\mathcal P_2(\mathbb R^d)$, we have
\[
    \int_{\mathbb R^d}\|y\|\,\|\nabla\rho_t(y)\|\,d y
    \leq M_2(\rho_t)^{1/2}\|w_t\|_{L^2(\rho_t)} + M_2(\rho_t) <\infty.
\]

Next,
we establish symmetry, which needs some care. If $\rho_t$ were known to be in $C^1$ and satisfy $\lim_{\|y\| \to \infty}\|y\| \rho_t(y) = 0$, then one may apply standard integration by parts to conclude that $\int_{\R^d}  y_i \partial_j \rho_t(y) \, dy = - \int_{\R^d} (\partial_j y_i) \rho_t(y) \, dy = -\delta_{ij}$, which would imply the symmetry. However, such regularity estimates on $\rho_t$ are not available a priori, so a careful argument is needed to justify the integration by parts. 
Let $\eta_R \in C_c^\infty(\mathbb{R}^d)$ be a smooth cut-off function such that
\begin{equation}
    \eta_R \equiv 1 \ \ \text{on} \ \ \mathcal{B}_R, 
    \quad 
    \eta_R \equiv 0  \ \ \text{outside} \ \ \mathcal{B}_{3R/2},
    \quad 
    0 \leq \eta_R \leq 1,  \quad \text{and}
    \quad 
    \|\nabla \eta_R\| \lesssim R^{-1},
    \label{eq: cutoff}
\end{equation}
where $\cB_{r}$ denotes the open ball in $\R^d$ centered at the origin and radius $r > 0$. 
As $R \to \infty$,
\[
    \eta_R(y)y_i\partial_j\rho_t(y)
    \to y_i\partial_j\rho_t(y)
    \quad \text{pointwise}, \text{ and}
    \qquad
    |\eta_R(y)y_i\partial_j\rho_t(y)|
    \leq |y_i\partial_j\rho_t(y)|.
\]
So, by the dominated convergence theorem,
\[
\lim_{R \to \infty} \int_{\RR^d} \eta_R(y)y_i\partial_j\rho_t(y) \, dy  = \int_{\RR^d} y_i\partial_j\rho_t(y) \, dy.
\]
Since $y \mapsto \eta_R(y)y_i$ is in $C^{\infty}_c(\cB_{2R})$ and $\rho_t \in W^{1,1}(\RR^d)$, the weak integration by parts  gives
\begin{align*}
    \int_{\RR^d}\eta_R(y) y_i\partial_j\rho_t(y) \, dy &= \int_{\cB_{2R}}\eta_R(y) y_i\partial_j\rho_t(y) \, dy\\
    &=  - \int_{\cB_{2R}} y_i \partial_j \eta_R(y)\, d\rho_t(y) - \delta_{ij} \int_{\cB_{2R}} \eta_R(y)\, d\rho_t(y).
\end{align*}
For the first term,
\[
\left|\int_{\cB_{2R}} y_i \partial_j \eta_R(y) \rho_t(y) \, dy\right| \lesssim \frac{1}{R} \int_{\RR^d} |y_i|\, d\rho_t(y) \to 0 \text{ as $R \to \infty$} 
\]
since $\rho_t$ has finite first moment.
For the second term, by the dominated convergence theorem, we have
\[ 
\lim_{R \to \infty} \delta_{ij} \int_{\cB_{2R}} \eta_R(y) \rho_t(y) \, dy  =  \delta_{ij} \int_{\RR^d} \rho_t(y)\, dy = \delta_{ij}.
\]
As such, we conclude
\[ 
\int_{\RR^d} y_i\partial_j\rho_t(y) \, dy = \lim_{R \to \infty} \int_{\RR^d} \eta_R(y)y_i\partial_j\rho_t(y) \, dy = -\delta_{ij},
\] 
and hence
\[
\int_{\R^d}y \frac{\nabla\rho_t(y)^\intercal}{\rho_t(y)}\,d \rho_t(y) = \int_{\R^d}y \nabla\rho_t(y)^\intercal\,d y = -I.
\] 

We have verified that $(\cL_t|_{\cI_{t}})^{-1}\left[\frac{\nabla\rho_t}{\rho_t}\right](x)$ is well-defined. By \cref{prop:fredholm}~(ii),  
\[(\cL_t|_{\cI_{t}})^{-1}\left[\frac{\nabla\rho_t}{\rho_t}\right](x)
    = \frac12 \Sigma_t^{-1}\frac{\nabla\rho_t}{\rho_t}(x)
      - \Sigma_t^{-1}
      B_tx,
\]
where $B_t$ is the unique symmetric solution of 
\[
\Sigma_t B_t + B_t \Sigma_t = \frac{1}{2} \int_{\R^d} \frac{\nabla\rho_t(y)}{\rho_t(y)}y^\intercal\,d \rho_t(y) = -\frac{1}{2}I.
\]
Solving this equation gives $B_t = -\frac{1}{4}\Sigma^{-1}_t,$ so that
\begin{equation}\label{eq:inv_3}    (\cL_t|_{\cI_{t}})^{-1}\left[\frac{\nabla\rho_t}{\rho_t}\right](x)
    = \frac12 \Sigma_t^{-1}\frac{\nabla\rho_t}{\rho_t}(x)
      + \frac{1}{4}\Sigma_t^{-2}x.
\end{equation}

Finally, substituting \eqref{eq:inv_2} and \eqref{eq:inv_3} into \eqref{eq:inverse_1} gives
\begin{equation}\label{eq:v_t_closed}
    v_t(x) = - \frac{1}{2}\Sigma^{-1}_t \frac{\nabla\rho_t}{\rho_t}(x)  - \frac{1}{4}\left(\Sigma^{-1}_t + \Sigma^{-2}_t\right)x \qquad \rho_t\text{-a.s.}\quad t \in [0,\infty) \text{ a.e.}
\end{equation}
The desired nonlinear FPE follows by plugging in \eqref{eq:v_t_closed} into the continuity equation in \eqref{eq:PIDE}.
\qed

\subsection{Proof of \cref{thm:lin-FP}}
\label{pf:lin-FP}

We first establish the following lemma concerning the kinetic energy estimate. 

\begin{lemma}\label{lem:kin_en}
    Consider $(v_t)_{t \in [0,\infty)},$ the velocity field of the gradient flow $(\rho_t)_{t \in [0,\infty)}$. For all $T \in [0,\infty)$, we have 
    \[
    \int^T_{0} \int_{\RR^d} \|v_t\|^2 \, d\rho_t dt < \infty.
    \]
\end{lemma}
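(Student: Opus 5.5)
The plan is to obtain the kinetic energy bound from the discrete IGW--JKO scheme and pass to the limit by lower semicontinuity, working one concatenation interval $[k\delta_0,(k+1)\delta_0]$ at a time. Fix $T$ and cover $[0,T]$ by finitely many intervals of length $\delta_0$. Since $\sH(\rho_t\|\gamma)\le\sH(\rho_0\|\gamma)$ along the flow, each interval contributes a bound of the same type, so it suffices to bound $\int_0^{\delta}\|v_t\|_{L^2(\rho_t)}^2dt$ on a single interval by a constant depending only on $\sH(\rho_0\|\gamma)$ and $d$.

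On one interval, compare each minimization step with the competitor $\rho=\rho_i^\tau$. This gives the discrete energy identity
\[
\sum_i \frac{1}{2\tau}\IGW^2(\rho_{i+1}^\tau,\rho_i^\tau)\le \sH(\rho_0\|\gamma)-\inf\sH = \sH(\rho_0\|\gamma).
\]
The velocity field in \cref{thm:zz_main1}~(i) is, by the construction in Propositions 4.1--4.4 of \cite{zhang2024gradient}, the weak limit of the discrete velocities $v^\tau_{i}=(T^\star_i-\id)/\tau$. Here $T^\star_i$ is the Gromov--Monge map of \cref{lem:gromov_monge} from $\rho_{i+1}^\tau$ to $\rho_i^\tau$; after the rotation step its cross-covariance $A^\star$ is symmetric positive definite. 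I would then relate $\|T_i^\star-\id\|^2_{L^2}$ to $\IGW^2$. Using the estimate \eqref{eq:zy-bound} from Section 10.1 of \cite{zhang2024gradient}, applied with $\mu_1=\mu_0$ and identity coupling, we get
\[
\int\|T^\star-\id\|^2d\rho\lesssim c_{1}^{-1}\IGW^2,
\]
where $c_1$ is the smallest eigenvalue of the relevant second moment matrix. By \cref{lem:spectral-control} and \eqref{eq: lower bound}, $c_1\ge k_{\rho_0}$ uniformly along the scheme. Summing over $i$ yields
\[
\int_0^\delta\|\bar v^n_t\|^2_{L^2(\bar\rho^n_t)}dt\lesssim k_{\rho_0}^{-1}\sH(\rho_0\|\gamma),
\]
uniformly in $n$.

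Next, pass to the limit. The joint lower semicontinuity of $(\mu,\nu)\mapsto\int\|d\nu/d\mu\|^2d\mu$ applies to the measures $\bar v^n_t\bar\rho^n_t\,dt$ on $[0,\delta]\times\R^d$, which converge weakly to $v_t\rho_t\,dt$ (cf. Theorem 5.4.4 in \cite{ambrosio2005gradient}). It gives $\int_0^\delta\|v_t\|^2_{L^2(\rho_t)}dt\le\liminf_n(\cdots)<\infty$. Summing over the finitely many intervals covering $[0,T]$ concludes.

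The main obstacle is the comparison between the $L^2$ displacement of the Gromov--Monge map and $\IGW$, together with the requirement that the discrete velocity be exactly the one whose limit is $v_t$. If quoting \eqref{eq:zy-bound} in this degenerate configuration is not clean, there is a fallback that avoids the scheme. Use \eqref{eq:v_t_closed} directly to get
\[
\|v_t\|^2_{L^2(\rho_t)}\lesssim k_{\rho_0}^{-2}\,\|\nabla\rho_t/\rho_t\|_{L^2(\rho_t)}^2+(k_{\rho_0}^{-2}+k_{\rho_0}^{-4})M_2(\rho_t).
\]
Then bound $\int_0^T\|w_t\|^2_{L^2(\rho_t)}dt$ (with $w_t=-\cL_t[v_t]$) via the energy dissipation inequality inherited from the JKO scheme, namely $\int_0^T\langle w_t,-v_t\rangle dt\le\sH(\rho_0\|\gamma)$. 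Combined with $\langle w,(\cL|_{\cI})^{-1}w\rangle\ge c\|w\|^2$, with $c$ depending on $\lambda_{\max}(\Sigma_t)\le 4\sH(\rho_0\|\gamma)+2d$, this bound closes the argument.
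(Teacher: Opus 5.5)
Your proposal is correct, but your main route differs from the paper's. The paper argues directly on the limit flow. Since $v_t\in\cI_t$, the cross term in $\llangle v_t,\cL_t[v_t]\rrangle_{L^2(\rho_t;\R^d)}$ equals $2\|\int x v_t(x)^\intercal\,d\rho_t(x)\|_{\F}^2\ge 0$. Hence $g_{\rho_t}(v_t,v_t)\ge 2\lambda_{\min}(\Sigma_t)\|v_t\|^2_{L^2(\rho_t;\R^d)}\ge 2k_{\rho_0}\|v_t\|^2_{L^2(\rho_t;\R^d)}$. The energy inequality $\int_0^T g_{\rho_t}(v_t,v_t)\,dt\le 2\sH(\rho_0\|\gamma)$ from Section 10.3 of \cite{zhang2024gradient} then gives $\int_0^T\|v_t\|^2_{L^2(\rho_t;\R^d)}\,dt\le k_{\rho_0}^{-1}\sH(\rho_0\|\gamma)$ in one line, and this bound is even uniform in $T$.

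You instead work at the discrete level, in three steps: the discrete energy estimate, an aligned displacement bound $\int\|T^\star-\id\|^2\lesssim c_1^{-1}\IGW^2$, and lower semicontinuity of the kinetic action under weak convergence of $\bar v^n_t\bar\rho^n_t\,dt$. Your degenerate use of \eqref{eq:zy-bound} is legitimate. With $\mu_1=\mu_0$ and the identity coupling, $A_1=\frac12\Sigma_{\mu_0}$ is symmetric positive definite, with $\phi_1(x)=2x^\intercal\Sigma_{\mu_0}x$ convex. The $\pi_{1,2}\otimes\pi_{1,2}$ term becomes $\IGW^2(\mu_0,\mu_2)$ and the other term vanishes. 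So you get $\int\|T^\star-\id\|^2\,d\mu_0\le \frac{4\sqrt2}{c_{1,\mu_0}}\IGW^2(\mu_0,\mu_2)$. Positive definiteness of $A_2$ comes from Lemma 4.2 of \cite{zhang2024gradient}, because the iterates stay in an $\IGW$-ball around the starting point by \eqref{eq:ball_ineq}.

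This is the classical AGS-style argument, and it has the merit of not needing the continuous-time energy inequality. It is heavier, though. It relies on two facts you must read off from Propositions 4.1--4.4 of \cite{zhang2024gradient}: the exact form of the discrete velocities, and that $v_t$ is the density of the weak limit of $\bar v^n_t\bar\rho^n_t\,dt$. It also only yields a bound proportional to the number of concatenated intervals, which is enough for finiteness.

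Your fallback is essentially the paper's argument with an unnecessary detour. Since $w_t=-\cL_t[v_t]$, the quantity $\llangle w_t,-v_t\rrangle_{L^2(\rho_t;\R^d)}$ you control is exactly $g_{\rho_t}(v_t,v_t)$. Bounding it below by $2k_{\rho_0}\|v_t\|^2_{L^2(\rho_t;\R^d)}$ finishes the proof immediately. You do not need the closed form \eqref{eq:v_t_closed}, the upper spectral bound on $\cL_t|_{\cI_t}$, or the control of $\lambda_{\max}(\Sigma_t)$.
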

\begin{proof}
    Recall from \eqref{eq: lower bound} that $\inf_{t \in [0,\infty)}\lambda_{\min} (\Sigma_{t}) \ge k_{\rho_0}$. By the definition of the mobility operator $\cL_{t}$, one has
    \begin{align*}
    g_{\rho_t}(v_t,v_t) := \llangle v_t,  \cL_{t}[v_t]\rrangle_{L^2(\rho_t;\RR^d)}
   \geq 2k_{\rho_0} \|v_t\|^2_{L^2(\rho_t;\RR^d)}.
\end{align*}
To see the last inequality, observe that, as $v_t \in \cI_t$,
\[
\begin{split}
\llangle v_t,  \cL_{t}[v_t]\rrangle_{L^2(\rho_t;\RR^d)} &= 2\left ( \int_{\R^d} v_t^\intercal \Sigma_t v_t \, d\rho_t + \left \|  \int_{\R^d} xv_t(x)^\intercal \, d\rho_t(x) \right \|_{\F}^2 \right ) \\
&\ge 2\lambda_{\min} (\Sigma_t) \| v_t \|_{L^2(\rho_t;\R^d)}^2,
\end{split}
\]
where $\| \cdot \|_{\F}$ denotes the Frobenius norm.
Applying the energy inequality from Section 10.3 in \cite{zhang2024gradient}, $\int^T_0 g_{\rho_t}(v_t,v_t) \,dt \leq 2\sH(\rho_0\|\gamma)$, 
we obtain
\begin{align*}
    \int^T_{0} \int_{\RR^d} \|v_t\|^2 \, d\rho_t dt \le k_{\rho_0}^{-1}\sH(\rho_0 \| \gamma) < \infty.
\end{align*}
\end{proof}

\begin{proof}[Proof of \cref{thm:lin-FP}]
Consider any $\xi \in C_c^\infty((0,T))$ and 
$\eta_R \in C_c^\infty(\R^d)$ satisfying \eqref{eq: cutoff}.
Since $(\rho_t)_{t \in [0,\infty)}$ is a distributional solution of \eqref{eq:nonlinear-FP} on $(0,T) \times \R^d,$ for any $0 < T < \infty,$ we have that for $i,j \in \{ 1,\dots, d \}$, 
\[
\int_{0}^{T}\int_{\mathbb{R}^{d}}
\partial_t \big (\xi(t)\eta_R(x)x_i x_j \big)
\,d\rho_t(x)\,dt
=
-\int_{0}^{T}\int_{\mathbb{R}^{d}}
\left\langle
\nabla_x \big (\xi(t)\eta_R(x)x_i x_j \big),
v_t(x)
\right\rangle
\,d\rho_t(x)\,dt.
\]
Denoting the $i$-th component of $v_t$ by $v_t^{(i)}$,
we have
\begin{multline}
\label{eq:moment-cutoff-identity}
\int_{0}^{T}\int_{\mathbb{R}^{d}}
\xi'(t)\eta_R(x)x_i x_j
\,d\rho_t(x)\,dt
=
-\int_{0}^{T}\int_{\mathbb{R}^{d}}
\xi(t) x_i x_j
\big\langle \nabla \eta_R(x),
v_t(x)
\big\rangle
\,d\rho_t(x)\,dt \\
-\int_{0}^{T}\int_{\mathbb{R}^{d}}
\xi(t)\eta_R(x)
\big(x_j v_t^{(i)} (x) + x_i v_t^{(j)}(x) \big)
\,d\rho_t(x)\,dt.
\end{multline}
Since $M_2(\rho_t) \leq d(4\sH(\rho_0\|\gamma) + 2d)$ (cf. \eqref{eq: lower bound}), it follows that $\int_{0}^{T}\int_{\mathbb{R}^{d}}
|\xi'(t)x_i x_j
\,d\rho_t(x)|\,dt < \infty$, which, by the dominated convergence theorem, implies that
\begin{equation}\label{eq:term_1}
    \lim_{R \to \infty} \int_{0}^{T}\int_{\mathbb{R}^{d}}
\xi'(t)\eta_R(x)x_i x_j
\,d\rho_t(x)\,dt = \int^T _0\xi'(t) \int_{\mathbb{R}^{d}}x_i x_j
\,d\rho_t(x)\,dt.
\end{equation}
Likewise, $\int^T_0 \int_{\R^d} |\xi(t)x_k v^{(k)}_t(x)| \, d\rho_t(x)dt < \infty$ for $k \in \{ 1,\dots,d \}$ by \cref{lem:kin_en}, so that we have
\begin{equation}\label{eq:term_3}
\begin{split}
    &\lim_{R \to \infty}\int_{0}^{T}\int_{\mathbb{R}^{d}}
\xi(t)\eta_R(x)
\big(x_j v_t^{(i)} (x) + x_i v_t^{(j)}(x) \big)
\,d\rho_t(x)\,dt\\
&= \int_{0}^{T}\xi(t)\int_{\mathbb{R}^{d}}
\big(x_j v_t^{(i)} (x) + x_i v_t^{(j)}(x) \big)
\,d\rho_t(x)\,dt.
\end{split}
\end{equation}

We shall show that the first term on the right-hand side of \eqref{eq:moment-cutoff-identity} vanishes as $R \to \infty$. Observe that
\begin{align*}
&\left|
\int_{0}^{T}
-\xi(t) 
\int_{\R^d} x_i x_j
\big \langle
 \nabla \eta_R(x),v_t(x)
\big\rangle
\,d\rho_t(x)\,dt
\right|\notag \\ 
&\leq
\int_{0}^{T}
|\xi(t)|
\int_{\R^d}
|x_i|\,|x_j|\,\|\nabla \eta_R(x)\|\,\|v_t(x)\|
\,d\rho_t(x)\,dt  \\
&\lesssim
\int_{0}^{T}
|\xi(t)|
\int_{\cB_{3R/2}\setminus\cB_{R}}
|x_i|\,R\cdot \frac{1}{R}\,\|v_t(x)\|
\,d\rho_t(x)\,dt  \\
&\lesssim_{\xi}
\int_{0}^{T}
\int_{\cB_{3R/2}\setminus\cB_{R}}
\|x\|\,\|v_t(x)\|
\,d \rho_t(x).
\end{align*}
By inequality \eqref{eq: lower bound} and \cref{lem:kin_en},
\[\int_{0}^{T}
\int_{\R^d}
\|x\|\,\|v_t(x)\|
\,d \rho_t(x) < \infty.
\]
As such, one may apply the dominated convergence theorem to conclude that 
\begin{align*}
\lim_{R \to \infty} \int_{0}^{T}
\int_{\cB_{3R/2}\setminus\cB_{R}}
\|x\|\,\|v_t(x)\|
\,d \rho_t(x) = 0,
\end{align*}
from which it follows that 
\begin{align}
\lim_{R \to \infty} \left|
\int_{0}^{T}
-\xi(t)
\int_{\R^d} x_i x_j
\big\langle
 \nabla \eta_R(x),v_t(x)
\big\rangle
\,d\rho_t(x)\,dt
\right| = 0.\label{eq:vanish_term}
\end{align}
Substituting \eqref{eq:term_1}--\eqref{eq:vanish_term} into \eqref{eq:moment-cutoff-identity},
we obtain 
\[
\int_{0}^{T}\xi'(t)\underbrace{\int_{\mathbb{R}^{d}}x_i x_j
\,d\rho_t(x)\,dt}_{=(\Sigma_{t})_{ij}}=
-\int_{0}^{T}\xi(t)\int_{\mathbb{R}^{d}}
\big(x_j v_t^{(i)} (x) + x_i v_t^{(j)}(x) \big)
\,d\rho_t(x)\,dt.
\]
This shows that $t \mapsto (\Sigma_t)_{ij}$ is locally absolutely continuous with 
\[
    \frac{d}{dt} (\Sigma_t)_{ij} = \int_{\mathbb{R}^{d}}
\big(x_j v_t^{(i)} (x) + x_i v_t^{(j)}(x) \big)
\,d\rho_t(x) \text{ for a.e. $t \in [0,\infty)$}.
\]

Substituting the expression for $v_t$ from \eqref{eq:v_t_closed}, we obtain for a.e. $t \in [0,\infty)$,
\[
\begin{aligned}
\frac{d}{dt}(\Sigma_t)_{ij}
&=
-\frac12 \sum_{k=1}^d (\Sigma^{-1}_t)_{ik}
    \int_{\mathbb R^d} x_j \partial_k \rho_t(x)\,dx
-\frac12 \sum_{k=1}^d (\Sigma^{-1}_t)_{jk}
    \int_{\mathbb R^d} x_i \partial_k \rho_t(x)\,dx \\
&\quad -\frac14 \sum_{k=1}^d (\Sigma_t^{-1}+\Sigma_t^{-2})_{ik}
    \int_{\mathbb R^d} x_j x_k\,d\rho_t(x)
-\frac14 \sum_{k=1}^d (\Sigma_t^{-1}+\Sigma_t^{-2})_{jk}
    \int_{\mathbb R^d} x_i x_k\,d\rho_t(x)\\
&= \frac12 \sum_{k=1}^d (\Sigma^{-1}_t)_{ik}\delta_{jk} +
\frac12 \sum_{k=1}^d (\Sigma^{-1}_t)_{jk}\delta_{ik} -\frac14 \sum_{k=1}^d (\Sigma_t^{-1}+\Sigma_t^{-2})_{ik} (\Sigma_t)_{jk}\\
&\quad -\frac14 \sum_{k=1}^d (\Sigma_t^{-1}+\Sigma_t^{-2})_{jk}(\Sigma_t)_{ik}\\
&= (\Sigma^{-1}_t)_{ij} - \frac{1}{4}\left(I+\Sigma_t^{-1}\right)_{ij} - \frac{1}{4}\left(I+\Sigma_t^{-1}\right)_{ij}\\
&= \frac{1}{2}\left(\Sigma^{-1}_t - I \right)_{ij},
\end{aligned}
\]
where we used the fact that $\int_{\R^d} x_j \partial_k \rho_t (x) \, dx = -\delta_{jk}$; cf. the proof of \cref{thm:nonlin-FP}. We conclude that $t \mapsto \Sigma_t$ solves the matrix ODE \eqref{eq:IVP}.

Finally, we shall show that the matrix ODE \eqref{eq:IVP} has a unique solution. Denote by $\mathbb S^d$ the vector space of symmetric matrices in $\R^{d \times d},$ and let $\mathbb S^d_{++}$ denote the corresponding cone of positive definite matrices. Define $\Psi\colon\mathbb{S}^d\to(-\infty,\infty]$ by
\[ 
\Psi(X) \coloneqq
    \begin{cases}
        \dfrac12\bigl(-\log\det(X)+\tr(X)\bigr),
        & X\in\mathbb{S}_{++}^d,\\[1ex]
        +\infty,
        & X\notin\mathbb{S}_{++}^d,
    \end{cases}
\]
which is proper, lower semicontinuous, and convex.
Its gradient is 
\[
    \nabla\Psi(X)
    =
    \frac12\left(-X^{-1}+I\right), \ X\in\mathbb{S}_{++}^d;
\]
see Exercise 21 in Chapter 3.1 of \cite{borwein2006convex}.
Thus, the matrix ODE \eqref{eq:IVP} can be written as the gradient flow
\[
    \dot X_t=-\nabla\Psi(X_t).
\]
Proposition~2.1 of \cite{santambrogio2017euclidean} yields
the desired uniqueness.

It remains to establish the uniqueness claim for the linear FPE \eqref{eq:linear-FP}. The uniform control on the eigenvalues of $\Sigma_t$ from \eqref{eq: lower bound} yields uniform control on the coefficients of \eqref{eq:linear-FP}. We now apply Theorem 9.4.6 in \cite{bogachev2022fokker} by choosing the Lyapunov function $V(x) = 1 + \|x\|^2$ to conclude the proof.
\end{proof}

\subsection{Proof of \cref{thm:lin-SDE}}
\label{pf:SDE_rep}
Since the smallest eigenvalues of $A_t = \Sigma_t$ are bounded away from zero uniformly over $t \in [0,\infty)$ (see \eqref{eq: lower bound}), the SDE (\ref{eq:linear-SDE}) admits a unique strong solution (cf. Theorems 5.2.5 and 5.2.9 in \cite{karatzas2014brownian}). Uniqueness in law follows by the Yamada--Watanabe theorem (Proposition 1 in \cite{yamada1971uniqueness}). 

Let $(X_t)_{t \in [0,\infty)}$ be the strong solution to (\ref{eq:linear-SDE}) with  $\nu_t = \text{Law}(X_t)$ for all $t \in [0,\infty)$. Applying Itô's formula  to smooth, compactly supported test functions, we obtain that $(\nu_t)_{t \in [0,\infty)}$ satisfies the FPE (\ref{eq:linear-FP}) with $\nu_0 = \rho_0$. By the uniqueness established in \cref{thm:lin-FP}, we conclude that $\nu_t = \rho_t$ for all $t \in [0,\infty)$. \qed

\subsection{Proofs of Corollaries \ref{cor:closed-form} and \ref{cor:gauss}}\label{pf:cor:closed-form}
These follow directly from Section 5.6 in \cite{karatzas2014brownian}.
\qed

\subsection{Proof of \cref{cor:regularity}}\label{pf:cor:regularity}
The matrix $Q_t$ is nonsingular for $t > 0$, so that
\[
    \rho_t(x) =  \frac{1}{(2\pi)^{d/2}\det(Q_t)^{1/2}}\int_{\R^d}
   \exp\left(-\frac12(x-\Phi_ty)^\intercal Q_t^{-1}(x-\Phi_ty)\right)
    \mathcal{\rho}_0(y)\,dy.
\]
Recall that $B \mapsto B^{-1}$ is smooth on $\mathbb S^{d}_{++}$. Since $A_t$ satisfies
$\dot A_t = \frac{1}{2}\left(A^{-1}_t - I\right)$, 
by Lemma 2.3 in \cite{teschl2012ode}, we have that $t \mapsto A_t$ is smooth. Likewise, 
 $t \mapsto \Phi_t$ is smooth. This shows that $t \mapsto Q_t$ is smooth. The desired claim follows easily. 
 \qed

\subsection{Proof of \cref{thm:exp_conv}}
\label{pf:exp_conv}
Recall the log-Sobolev inequality for the standard Gaussian distribution (cf. Proposition 6~(iv) in \cite{ambrosio2007gradient}),
\[
\sH(\rho \| \gamma) \le \frac{1}{2} \sI(\rho \| \gamma),
\]
where $\sI(\rho \| \gamma)$ denotes the relative Fisher information defined in \eqref{eq: fisher}.

We use Proposition 6.3 (v) from \cite{ambrosio2007gradient}. To this end, we shall verify its hypothesis. Fix any $T > 0$. By \cref{lem:kin_en}, 
$\int_0^T\int_{\R^d} \|v_t\|^2\,d\rho_t\,d t < \infty$. 
It remains to show that the integrated relative Fisher information is finite. Recall from the proof of \cref{thm:nonlin-FP} that $\cL_t[v_t] = - ( \frac{\nabla \rho_t}{\rho_t} + \id)$. As such, 
\begin{align*}
\int_0^T \mathsf{I}(\rho_t \| \gamma)\,d t
=
\int_0^T \int_{\R^d}
\left\|
\frac{\nabla \rho_t}{\rho_t} + y
\right\|^2
\,d \rho_t(y)\,d t  = \int_0^T \big \| \cL_t[v_t] \big \|_{L^2(\rho_t;\R^d)}^2 dt. 
\end{align*}
Using inequality \eqref{eq: lower bound} and the definition of $\cL_t$, one has 
\[
\int_0^T \big \| \cL_t[v_t] \big \|_{L^2(\rho_t;\R^d)}^2 dt \lesssim_{\rho_0} \int_0^T \| v_t \|_{L^2(\rho_t;\R^d)}^2 \, dt < \infty
\]
by \cref{lem:kin_en}. We now apply Proposition 6.3~(v) from \cite{ambrosio2007gradient} to conclude that the mapping $t \mapsto \sH(\rho_t \| \gamma)$ is locally absolutely continuous with
\[
    \frac{d}{d t}\sH(\rho_t \| \gamma)
    =
    \int_{\R^d}
    \left\langle
        v_t(y),\frac{\nabla\rho_t}{\rho_t}(y)+y
    \right\rangle
    d\rho_t.
\]
Recall that $v_t =- (\cL_t|_{\cI_t})^{-1}w_t$ with $w_t = \frac{\nabla \rho_t}{\rho_t} + \id$ for a.e. $t \in [0,\infty)$ (cf. the proof of \cref{thm:nonlin-FP}), so that the right-hand side agrees with 
$-\llangle (\cL_t|_{\cI_t})^{-1}[w_t],w_t \rrangle_{L^2(\rho_t;\R^d)}$. 
Observe that for $v \in \cI_t$ with $\| v \|_{L^2(\rho_t;\R^d)} = 1$, 
\[
\begin{split}
\llangle v,  \cL_{t}[v]\rrangle_{L^2(\rho_t;\RR^d)} &= 2\left ( \int_{\R^d} v^\intercal \Sigma_t v \, d\rho_t + \left \|  \int_{\R^d} xv(x)^\intercal \, d\rho_t(x) \right \|_{\F}^2 \right ) \in \big [ 2\lambda_{\min} (\Sigma_t), 4\lambda_{\max}(\Sigma_t) \big],
\end{split}
\]
where the upper bound follows as, for any unit vector $w \in \R^d$ and $i \in \{1,\dots,d \}$,
\[
\left  | \int_{\R^d}(w^\intercal x)  v^{(i)}(x)  \, d\rho_t(x)  \right|^2 \le (w^\intercal \Sigma_t w) \int_{\R^d} |v^{(i)}|^2 \, d\rho_t. 
\]
This implies that the spectrum of $\cL_{t}|_{\cI_t}$ is contained in $[ 2\lambda_{\min} (\Sigma_t), 4\lambda_{\max}(\Sigma_t) ]$ (cf. Proposition 6.9 in \cite{brezis2011functional}). Using spectral decomposition, we obtain
\[
-\llangle (\cL_t|_{\cI_t})^{-1}[w_t],w_t \rrangle_{L^2(\rho_t;\R^d)} \le -\frac{1}{4\lambda_{\max}(\Sigma_t)} \| w_t \|_{L^2(\rho_t;\R^d)}^2. 
\]

One can upper bound $\lambda_{\max}(\Sigma_t)$ by $4\sH(\rho_0 \| \gamma)+2d$ from  \eqref{eq: lower bound}, which can be sharpened 
 using the matrix ODE \eqref{eq:IVP} as follows.
Diagonalizing $\Sigma_0$ and using uniqueness of the matrix ODE, we obtain
\[
\Sigma_t=
Q\operatorname{diag}\bigl(\lambda_1(t),\ldots,\lambda_d(t)\bigr)Q^\intercal
\]
for some orthogonal matrix $Q,$ and each eigenvalue satisfies the ODE
\[
\dot{\lambda}_i(t) = \frac{1}{2}\left(\frac{1}{\lambda_i(t)}-1\right).
\]
By elementary arguments, each $\lambda_i(t)$ is monotone toward $1$, and hence
\[\lambda_{\max}(\Sigma_t) \le \max\left\{\lambda_{\max}(\Sigma_0),1\right\}.\]

Putting everything together, we conclude
\begin{align*}
    \frac{d}{d t}\sH(\rho_t \| \gamma)
\le - \frac{1}{4\max\{\lambda_{\max}(\Sigma_0),1\}}
\mathsf{I}(\rho_t \| \gamma) 
\le
- \frac{1}{2\max\{\lambda_{\max}(\Sigma_0),1\}}
\sH(\rho_t \| \gamma),
\end{align*}
where the last inequality follows by the log-Sobolev inequality. The desired result now follows from Gr\"{o}nwall's inequality. 
\qed

\section{Discussion and Concluding Remarks}
\label{sec: discussion}
This work studied the existence of an IGW gradient flow for the $\KL$ functional and provided a detailed characterization of its dynamics. We first showed that the gradient flow satisfies a PIDE and, by evaluating the action of the inverse mobility operator, obtained a more explicit description as a  nonlinear FPE. We then showed that the second moment dynamics decouple from the evolving law and satisfy an autonomous matrix ODE, which reduced the nonlinear FPE to a linear, time-inhomogeneous FPE. 
This, in turn, yielded a probabilistic representation of the IGW gradient flow as the time-marginal flow of a linear SDE resembling a second moment-modulated version of the classical OU process. Although the resulting dynamics share some qualitative properties with the OU process, including exponential convergence to equilibrium, they are not, in general, a deterministic time change of the OU process.

Several compelling directions for future work arise from these results. First, one may consider relative entropy w.r.t. a more general rotationally invariant log-concave measure and investigate whether the existence theory developed here, as well as analogous PDE and probabilistic representations, continue to hold. In this broader setting, one may still expect the IGW gradient flow to admit a nonlinear FPE representation, but the second moment decoupling that enables the linearization in the Gaussian case is unlikely to persist. The resulting dynamics may therefore genuinely depend on the evolving law, suggesting a McKean--Vlasov-type SDE representation. Second, it would be interesting to study IGW gradient flows associated with other rotationally invariant $f$-divergences, in analogy with corresponding developments in the Wasserstein setting \cite{liu2023minimizing,chewi2020svgd} . Finally, it remains an open question whether the FPE or SDE identified here arises naturally as a model for a physical or biological phenomenon.

\section{Acknowledgements}
The authors used ChatGPT 5.6 Sol plus to calibrate numerical values used in the counterexample in \cref{sec: counterexample}. They independently verified the argument therein. 

\appendix 
\label{sec: appendix}

\section{Proof of \cref{prop:fredholm}}
\label{sec: app A}
\underline{Part (i)}. We will only verify the kernel. All other claims are straightforward. Suppose that $\cL_\mu[v]=0$. Then
\[
v(x)=\underbrace{-\Sigma_\mu^{-1}\int_{\mathbb R^d} y\,v(y)^\intercal\,d\mu(y)}_{=:B} x.
\]
Substituting this back gives $B^\intercal x=-Bx$. 

Conversely, if $v(x)=Bx$ with $B^\intercal=-B$, then
\[
\cL_{\mu}[v](x) =
2\Sigma_\mu Bx+2 \left (\int_{\mathbb R^d} y(By)^\intercal\,d\mu(y) \right )x =
2\Sigma_\mu(B+B^\intercal)x
=
0.
\]

\medskip

\underline{Part (ii)}.  
Recall that for a given $A \in \R^{d \times d}$, the Sylvester equation $\Sigma_\mu B + B \Sigma_\mu = A$ admits the unique solution, $B = \int_0^\infty e^{-t\Sigma_\mu} A e^{-t\Sigma_\mu} \, dt$, which is symmetric if $A$ is symmetric (cf. \cite{simoncini2016computational}).

For a given $w \in \cI_\mu$, let $v$ be the right-hand side of \eqref{eq: inverse}. One can directly verify that $v \in \cI_\mu$ and $\cL_\mu [v]=w$, which shows surjectivity. Injectivity follows from $\mathrm{ker}(\cL_\mu) \cap \cI_\mu = \{ 0 \}$. Boundedness of the inverse follows from Corollary 2.7 in \cite{brezis2011functional}, or more directly from the expression $B_w = \frac12\int_0^\infty e^{-t\Sigma_\mu} A_w e^{-t\Sigma_\mu} \, dt$ with $A_w = \int_{\R^d} y w(y)^\intercal \, d\mu (y)$.
\qed

\section{Failure of $\lambda$-convexity along generalized IGW geodesics}
\label{sec: counterexample}
We shall first construct a generalized IGW geodesic along which $-(1/4)$-convexity fails for the $\sH(\cdot\|\gamma)$ functional. Throughout, we work on $\R^2.$ Let $\mu_0 = \mathcal N\left(
\begin{pmatrix}3\\0\end{pmatrix},
\begin{pmatrix}
1/16&0\\
0&9/4
\end{pmatrix}\right).$ For $r = \sqrt{286}$ and $L = \sqrt{1261225}$, we set
\begin{align*}
    \mu_1 &= \mathcal N\left(
\frac{1}{2L}\begin{pmatrix}921\\38r\end{pmatrix},
\frac{1}{(140L)^2}\begin{pmatrix}
46603&5184r\\
-816r&73497
\end{pmatrix}\begin{pmatrix}
46603&5184r\\
-816r&73497
\end{pmatrix}^\intercal\right),\\
\mu_2 &= \mathcal N\left(
\frac{1}{2L}\begin{pmatrix}921\\-38r\end{pmatrix},
\frac{1}{(140L)^2}\begin{pmatrix}
46603&-5184r\\
816r&73497
\end{pmatrix}\begin{pmatrix}
46603&-5184r\\
816r&73497
\end{pmatrix}^\intercal\right).
\end{align*}
Consider a random variable $\xi \sim \cN(0,I).$ Define
\begin{align*}
    X &= \begin{pmatrix}3\\0\end{pmatrix}+\begin{pmatrix}1/4&0\\0&3/2\end{pmatrix}\xi,\\
    Y &=
\frac1{2L}
\begin{pmatrix}
921\\
38r
\end{pmatrix}
+
\frac1{140L}
\begin{pmatrix}
46603&5184r\\
-816r&73497
\end{pmatrix}\xi,\\
Z
&=
\frac1{2L}
\begin{pmatrix}
921\\
-38r
\end{pmatrix}
+
\frac1{140L}
\begin{pmatrix}
46603&-5184r\\
816r&73497
\end{pmatrix}\xi.
\end{align*}
Since all orthogonal transformations in $\R^2$ are compositions of rotations and reflections, one may use the formula in Theorem 3.1 from \cite{dandapanthula2025optimal} to check that $(X,Y)$ and $(X,Z)$ are the optimal IGW couplings for $(\mu_0,\mu_1)$ and $(\mu_0,\mu_2)$, respectively. One may also verify numerically that the cross-covariance matrices for $(X,Y)$ and $(X,Z)$ are positive definite. Thus $\pi = \mathrm{Law}(X,Y,Z)$ is a valid glued coupling to construct the generalized IGW geodesic for $(\mu_0,\mu_1,\mu_2)$ (see Definition 3.2 in \cite{zhang2024gradient}).  
For the generalized IGW geodesic
\[
\nu_t
=
\big((y,z) \mapsto (1-t)y+tz\big)_\#\pi,
\]
its midpoint is the law of
\[
\frac{Y+Z}{2}
=
\frac1{2L}
\begin{pmatrix}
921\\
0
\end{pmatrix}
+
\frac1{140L}
\begin{pmatrix}
46603&0\\
0&73497
\end{pmatrix}\xi.
\]
For $(Y',Z')$ an independent copy
of $(Y,Z),$ a direct calculation  shows that 
\begin{equation}\label{eq:1/4}
    \sH(\nu_{1/2}\|\gamma)
>
\frac12\sH(\mu_1\|\gamma)+\frac12\sH(\mu_2\|\gamma)
+\frac1{16}\E_{\pi\otimes\pi}\left[ \left(\langle Y,Y'\rangle-\langle Z,Z'\rangle\right)^2\right],
\end{equation}
so that $\sH(\cdot\|\gamma)$ is not $-(1/4)$-convex along generalized IGW geodesics.
We shall now use a scaling argument to show that $\sH(\cdot\|\gamma)$ is not $\lambda$-convex along generalized IGW geodesics for any $\lambda \in \R.$

Fix $s \in (0,1)$ and consider $(sX,sY,sZ)$, whose law is denoted by $\pi^s$. It follows that  $(sX,sY)$ and $(sX,sZ)$ are optimal IGW couplings for the respective pairs of marginals and that they have positive definite cross-covariance matrices.  Thus the conditions for the IGW generalized geodesic are satisfied for this triple. Let
\[ 
\nu_t^s = \bigl((y,z) \mapsto (1-t)y+tz\bigr)_{\#}\pi^s.
\]

Recall that
\[ \sH\bigl(\mathcal N(m,\Sigma)\|\gamma\bigr) =
\frac12 \left[ \tr(\Sigma)+\|m\|^2 - \log\det(\Sigma)-d
\right],
\]
where we use $\Sigma$ to denote the covariance matrix (and not the second moment matrix) for the rest of this proof. Denote the mean and covariance matrix of $\nu_{1/2}$ by $m_{\nu_{1/2}}$ and $\Sigma_{\nu_{1/2}}$ respectively, and let $\mu^s_1 = \mathrm{Law}(sY)$ and $\mu^s_2 = \mathrm{Law}(sZ).$
Observe that
\begin{align*}
\sH\left(\nu_{1/2}^s\|\gamma\right)
&=\frac{s^2}{2}\left(\tr\left(\Sigma_{\nu_{1/2}}\right)
+\left\|m_{\nu_{1/2}}\right\|^2\right)
-\frac12\left(\log\det\left(\Sigma_{\nu_{1/2}}\right)
+\log s^4+d\right)\\
&= s^2\sH(\nu_{1/2}\|\gamma) + \frac{s^2}{2}(\log \det(\Sigma_{\nu_{1/2}}) + d) -\frac12\left(\log\det\left(\Sigma_{\nu_{1/2}}\right)
+\log s^4+d\right)\\
&> \frac{1}{2}\sH(\mu^s_1\|\gamma) + \frac{1}{2}\sH(\mu^s_2\|\gamma) + \frac{1}{16s^2}
\int
\left|
\langle y,y'\rangle-\langle z,z'\rangle
\right|^2
\,d\pi_{1,2}^s \otimes \pi_{1,2}^s(y,z,y',z')\\ 
&\quad + \frac{1}{4}(1 - s^2)\log\left(\frac{\det (\Sigma_{\mu_1} \Sigma_{\mu_2})}{\det (\Sigma_{\nu_{1/2}})^2}\right),
\end{align*}
where the inequality follows from adding and subtracting the necessary terms and applying (\ref{eq:1/4}). By a direct calculation, one may verify that $\det(\Sigma_{\mu_1} \Sigma_{\mu_2}) > \det (\Sigma_{\nu_{1/2}})^2$. As such, for $s < 1,$
\[
\sH\left(\nu_{1/2}^s \| \gamma \right)
>
\frac12\sH(\mu_1^s\|\gamma)
+\frac12\sH(\mu_2^s\|\gamma)
+\frac{1}{16s^2}
\int
\left|
\langle y,y'\rangle-\langle z,z'\rangle
\right|^2
\,d\pi_{1,2}^s \otimes \pi_{1,2}^s(y,z,y',z').
\]
Thus, the scaled example violates $-\left(\frac{1}{4s^2}\right)$-convexity, and taking $s\to0$ concludes the argument.

\section{Proof of \cref{lem:prox_ineq}}\label{app:2}
We recall the following local convexity result for $\IGW^2$, taken from Section 10.1 in \cite{zhang2024gradient}.

\begin{lemma}\label{lem:IGW_conv}
    In the setting of \cref{def:MGG}, it holds that
    \begin{multline*}
    \IGW^2(\nu_t,\mu_0) 
    = (1-t)\IGW^2(\mu_1,\mu_0) + t\IGW^2(\mu_2,\mu_0) \\
    - t\left(1- \frac{8\sqrt{2}}{c_{1,\mu_0}}\IGW(\mu_0,\mu_1)\alpha_{\pi} \right)
     \int_{(\R^d \times \R^d)^2} \bigl|\langle y,y'\rangle-\langle z,z'\rangle\bigr|^2\, d\pi_{1,2}\otimes\pi_{1,2}(y,z,y',z')\\
     +  t\frac{12\sqrt{2}}{c_{1,\mu_0}}\alpha_{\pi} \IGW^3(\mu_0,\mu_1)  + O(t^2),
\end{multline*}
where $\alpha_{\pi} = 1+ \frac{c_{2,\pi}^4}{2} M_2(\mu_1)M_2(\mu_0)+ c_{2,\pi} M_2(\mu_1),$ and $c_{1,\mu_0},$ and $c_{2,\pi}$ are defined as in \cref{lem:local_convexity}
\end{lemma}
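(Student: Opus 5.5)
The plan is to bound $\IGW^2(\nu_t,\mu_0)$ from above by the distortion cost of the explicit coupling $(\id,T_t)_{\#}\mu_0$, expand that cost to first order in $t$, and control the first-order cross terms by the same three bounds \eqref{eq:B-op-bound}--\eqref{eq:zy-bound} already used in the proof of \cref{lem:local_convexity}. Only an upper bound is needed for \cref{lem:prox_ineq}, so the displayed identity is to be read as ``$\le$''.

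\textbf{Step 1 (coupling and expansion).} Since $\nu_t=(T_t)_{\#}\mu_0$, optimality of $\IGW$ gives
\[
\IGW^2(\nu_t,\mu_0)\le \int\!\!\int \bigl|\langle x,x'\rangle-\langle T_t x,T_t x'\rangle\bigr|^2 d\mu_0(x)\,d\mu_0(x').
\]
Write $y=T_1x$, $z=T_2x$, and set $\pi=\mathrm{Law}(x,y,z)$. As in the proof of \cref{lem:local_convexity}, with $B=A_1^{-1}A_2$,
\[
T_tx=(1-t)y+tz+tE+O(t^2),\qquad E\coloneqq (B-I)z+(B^{-1}-I)y.
\]
Put $a=\langle x,x'\rangle$, $b=\langle y,y'\rangle$, $c=\langle z,z'\rangle$. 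Then
\[
\langle T_tx,T_tx'\rangle=(1-t)b+tc+t(1-t)\langle y-z,y'-z'\rangle+t\bigl(\langle y,E'\rangle+\langle E,y'\rangle\bigr)+O(t^2),
\]
where the $O(t^2)$ remainder is integrable thanks to the second-moment bounds and the bounds on $\|B^{\pm1}\|_{\op}$.

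\textbf{Step 2 (quadratic convexity identity).} I would use
\[
|a-(1-t)b-tc|^2=(1-t)|a-b|^2+t|a-c|^2-t(1-t)|b-c|^2.
\]
Integrating against $\pi\otimes\pi$, the couplings $(x,y)$ and $(x,z)$ are optimal, so this contributes
\[
(1-t)\IGW^2(\mu_1,\mu_0)+t\IGW^2(\mu_2,\mu_0)-t\int|b-c|^2\,d\pi\otimes\pi+O(t^2).
\]
The only remaining first-order contribution is the cross term
\[
-2t\int (a-b)\,D\,d\pi\otimes\pi,\qquad D\coloneqq \langle y-z,y'-z'\rangle+\langle y,E'\rangle+\langle E,y'\rangle,
\]
where $a-(1-t)b-tc$ has been replaced by $a-b$ at the cost of $O(t^2)$.

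\textbf{Step 3 (cross term).} By Cauchy--Schwarz,
\[
\Bigl|\int (a-b)D\Bigr|\le \IGW(\mu_0,\mu_1)\,\|D\|_{L^2(\pi\otimes\pi)}.
\]
Each piece of $D$ is then bounded by a multiple of $\int\|z-y\|^2d\pi$:
\begin{itemize}
\item $\int\!\!\int\langle y-z,y'-z'\rangle^2\le\bigl(\int\|y-z\|^2d\pi\bigr)^2$, and taking the square root of this quantity gives $\int\|z-y\|^2d\pi$.
\item The terms $\langle y,E'\rangle$ are handled by writing $E=(B-I)(z-y)+B^{-1}(B-I)^2y$, exactly as in \eqref{eq:cross-pre-substitution}. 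One factor $\|B-I\|_{\op}$ is converted into $(\int\|z-y\|^2)^{1/2}$ via \eqref{eq:B-minus-I-bound}, and $\|B^{-1}\|_{\op}\le c_{2,\pi}^2$ by \eqref{eq:B-op-bound}.
\end{itemize}
This yields $\|D\|_{L^2}\le \tfrac{\alpha_\pi}{?}\int\|z-y\|^2\,d\pi$, with a numerical factor to be tracked. The three summands of $\alpha_\pi=1+\tfrac{c_{2,\pi}^4}{2}M_2(\mu_1)M_2(\mu_0)+c_{2,\pi}M_2(\mu_1)$ are meant to come from these three sources. Finally, \eqref{eq:zy-bound} turns $\int\|z-y\|^2$ into
\[
\frac{2\sqrt2}{c_{1,\mu_0}}\Bigl(2\int|b-c|^2+3\IGW^2(\mu_1,\mu_0)\Bigr).
\]
This produces the coefficients $\frac{8\sqrt2}{c_{1,\mu_0}}\IGW(\mu_0,\mu_1)\alpha_\pi$ in front of $\int|b-c|^2$ and $\frac{12\sqrt2}{c_{1,\mu_0}}\alpha_\pi\IGW^3(\mu_0,\mu_1)$.

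\textbf{Main obstacle.} I expect the main difficulty to be Step 3. First, the bookkeeping has to reproduce exactly the constants $8\sqrt2$, $12\sqrt2$ and $\alpha_\pi$. Second, the bound for $\|D\|_{L^2(\pi\otimes\pi)}$ must be linear in $\int\|z-y\|^2$ rather than in its square root; otherwise the estimate would not close into the stated form. My first attempt would be to pull one factor $(\int\|z-y\|^2)^{1/2}$ out of each of the $\|B-I\|_{\op}$ factors and the other out of $\|z-y\|$ directly, mirroring \eqref{eq:cross-after-B-substitution}. A secondary point is to justify that the $O(t^2)$ remainders are uniform after integration, which follows from finite second moments and the two-sided bound $c_{2,\pi}$ on the spectra of $A_1,A_2$.
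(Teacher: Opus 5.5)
The paper gives no proof of this lemma. It quotes the estimate from Section~10.1 of \cite{zhang2024gradient}. Your route is correct, and it is the argument that the structure of the constants $8\sqrt2$, $12\sqrt2$ and $\IGW(\mu_0,\mu_1)\alpha_\pi$ points to: test with $(\id,T_t)_{\#}\mu_0$, apply the quadratic convexity identity, bound the cross term by Cauchy--Schwarz against $\IGW(\mu_0,\mu_1)$, and finish with \eqref{eq:B-op-bound}--\eqref{eq:zy-bound}. Reading the display as ``$\le$'' is also right. The $\alpha_\pi$-terms are bounds, not exact values, and only this direction is used in \cref{lem:prox_ineq}.

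Some points on completing Step~3.

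\textbf{Exact remainder and a sign.} In fact $T_tx=(1-t)y+tz+t(1-t)E$ exactly. So the cost is a polynomial in $t$ with finite coefficients, and the $O(t^2)$ needs no separate justification. The first-order expansion has a minus sign: $(1-t)b+tc-t(1-t)\langle y-z,y'-z'\rangle+\dots$. This is harmless, since you only use $\|D\|$.

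\textbf{Your ``obstacle'' is not one.} Every piece of $D$ is of second order in the discrepancy: it contains two factors of $z-y$, or one $\|B-I\|_{\op}$ and one $z-y$, or $\|B-I\|_{\op}^2$. Use $\|\langle y,E'\rangle\|_{L^2(\pi\otimes\pi)}\le M_2^{1/2}(\mu_1)\|E\|_{L^2(\pi)}$ (once for each of the two symmetric terms) together with
\[
\|E\|_{L^2(\pi)}\le\|B-I\|_{\op}\|z-y\|_{L^2(\pi)}+\|B^{-1}\|_{\op}\|B-I\|_{\op}^2M_2^{1/2}(\mu_1).
\]
With no extra numerical factor, this gives
\[
\|D\|_{L^2(\pi\otimes\pi)}\le\Bigl(1+c_{2,\pi}M_2^{1/2}(\mu_0)M_2^{1/2}(\mu_1)+\tfrac{c_{2,\pi}^4}{2}M_2(\mu_0)M_2(\mu_1)\Bigr)\int\|z-y\|^2\,d\pi .
\]
Multiplying by $2\,\IGW(\mu_0,\mu_1)$ and inserting \eqref{eq:zy-bound} produces exactly the two stated correction terms.

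\textbf{The middle summand does not match.} Your computation gives $c_{2,\pi}M_2^{1/2}(\mu_0)M_2^{1/2}(\mu_1)$, so the constant is $1+2\beta_\pi$, not the stated $c_{2,\pi}M_2(\mu_1)$. The bound \eqref{eq:B-minus-I-bound} unavoidably carries $M_2(\mu_0)$. Hence your argument yields the lemma with the stated $\alpha_\pi$ only when $M_2(\mu_0)\le M_2(\mu_1)$; the same $M_2^{1/2}(\mu_0)M_2^{1/2}(\mu_1)$ pattern appears in $\beta_\pi$ in \cref{lem:local_convexity}. This does not matter downstream. \cref{lem:prox_ineq} and the Cauchy estimate use $\alpha_\pi$ only through a $\rho_0$-dependent bound $\alpha_\pi\le\alpha_{\rho_0}$, and such a bound holds equally for $1+2\beta_\pi$.
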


\begin{proof}[Proof of \cref{lem:prox_ineq}]
Observe that
\begin{align*}
&\sH(\mu_1 \| \gamma) + \frac{1}{2\tau} \IGW^2(\mu_1,\mu_0)\\
&\le \sH(\nu_t \| \gamma) + \frac{1}{2\tau} \IGW^2(\nu_t,\mu_0) \\
&\le (1-t)\sH(\mu_1 \| \gamma) + t\sH(\mu_2 \| \gamma)
+ t\frac{8\sqrt{2}}{c_{1,\mu_0}}\beta_\pi
\int_{(\R^d \times \R^d)^2} \bigl|\langle y,y'\rangle-\langle z,z'\rangle\bigr|^2\, d\pi_{1,2}\otimes\pi_{1,2}(y,z,y',z')\\
&\quad + t\frac{12\sqrt{2}}{c_{1,\mu_0}}\beta_\pi \IGW^2(\mu_0,\mu_1)
   + (1-t)\frac{\IGW^2(\mu_1,\mu_0)}{2\tau} + t\frac{\IGW^2(\mu_2,\mu_0)}{2\tau} \\
&\quad - \frac{t}{2\tau}
\left(1 - \frac{8\sqrt{2}}{c_{1,\mu_0}} \IGW(\mu_0,\mu_1)\alpha_\pi \right)
\int_{(\R^d \times \R^d)^2} \bigl|\langle y,y'\rangle-\langle z,z'\rangle\bigr|^2\, d\pi_{1,2}\otimes\pi_{1,2}(y,z,y',z') \\
&\quad + \frac{t}{2\tau}\frac{12\sqrt{2}}{c_{1,\mu_0}}\alpha_\pi
\IGW^3(\mu_0,\mu_1)
+ O(t^2),
\end{align*}
where the first inequality follows from the optimality of $\mu_1$, and the second follows from the local convexity of $\KL$ and $\IGW^2$, see \cref{lem:local_convexity} and \cref{lem:IGW_conv}. Rearranging terms, dividing both sides by $t$, and taking the limit $t \to 0$, we obtain the desired inequality. 
\end{proof}

\section{Proof of \eqref{eq: cauchy 2}}
\label{app: 3}
\underline{Step 1}. 
We shall apply \cref{lem:prox_ineq} with $(\mu_0,\mu_1,\mu_2) = (\rho^{\tau}_{i-1},\rho^{\tau}_i,\nu)$ for $\nu$ close to $\rho_0$ in $\IGW$. We first verify the existence of a modified generalized geodesic, which, by Lemma 4.2~(ii) in \cite{zhang2024gradient}, holds whenever $\rho^{\tau}_{i-1},\rho^{\tau}_i,\nu \in \cB_{\IGW}\left(\rho_0, \bar \delta\right)$ with $\bar \delta = \frac{(1-1/\sqrt{2})\lambda_{\min}(\Sigma_{\rho_0})}{2^{1/4}}$, where 
$\cB_{\IGW}(\rho_0,r)$ denotes the open IGW ball
centered at $\rho_0$ with radius $r$,
\[ \cB_{\IGW}(\mu,r)
\coloneqq
\left\{
    \nu\in\cP_2(\R^d): \IGW(\mu,\nu)<r
\right\}.
\]
 Observe that for all $j \in \{ 1,\dots, n\}$,
\begin{equation}\label{eq:ball_ineq}
\begin{split}
    \IGW(\rho_j^\tau,\rho_0) &\leq \sum_{i=0}^{j-1}\IGW(\rho_{i+1}^\tau,\rho_i^\tau) \leq \left(\sum_{i=0}^{j-1} \tau \right)^{1/2}\left(\sum_{i=0}^{j-1} \frac{1}{\tau} \IGW^2(\rho_{i+1}^\tau,\rho_i^\tau)) \right)^{1/2}\\ 
    &\leq \sqrt{\delta}
\left(\sum_{i=0}^{j-1} \frac{1}{\tau} \IGW^2(\rho_{i+1}^\tau,\rho_i^\tau)) \right)^{1/2} \leq
\left(2 \delta \sum_{i=0}^{j-1} \sH(\rho_i^\tau\|\gamma) - \sH(\rho_{i+1}^\tau\|\gamma)\right)^{1/2} \\ 
&\leq \sqrt{2\delta\sH(\rho_0\|\gamma)},
\end{split}
\end{equation}
which implies that  $\rho_j^\tau \in \cB_{\IGW}\left(\rho_0, \bar \delta \right)$ for all $j \in \{ 1,\dots,n\}$ (recall our choice of $\delta$).
As such, the existence of a modified generalized geodesic is guaranteed for $(\mu_0,\mu_1,\mu_2) = (\rho^{\tau}_{i-1},\rho^{\tau}_i,\nu)$ for $\nu\in\cB_{\IGW}\left(\rho_0, \bar \delta\right)$. 

We observe that whenever
\begin{equation} \label{eq:non_neg}
    1 - \frac{8\sqrt{2}}{c_{1,\mu_0}} \, \IGW(\mu_0,\mu_1)\, \alpha_\pi
- 2\tau \frac{8\sqrt{2}}{c_{1,\mu_0}}\,\beta_\pi \geq 0,
\end{equation}
the inequality from \cref{lem:prox_ineq} translates to  
\begin{equation}\label{eq:prox_ineq2}
\begin{split}
&\sH(\mu_1\|\gamma) + \frac{1}{2\tau} \, \IGW^2(\mu_1,\mu_0) \\
&\le
\sH(\mu_2\|\gamma)
- \frac{1}{2\tau}\Bigg(
\left(
1 - \frac{8\sqrt{2}}{c_{1,\mu_0}} \, \IGW(\mu_0,\mu_1)\, \alpha_\pi
- 2\tau \frac{8\sqrt{2}}{c_{1,\mu_0}}\,\beta_\pi
\right)\
 \IGW^2(\mu_1,\mu_2) \\[6pt]
&\qquad
- 2\tau \frac{12\sqrt{2}}{c_{1,\mu_0}}\,\beta_\pi \,
\IGW^2(\mu_0,\mu_1)
- \IGW^2(\mu_2,\mu_0)
- \frac{12\sqrt{2}}{c_{1,\mu_0}}\,\alpha_\pi \,
\IGW^3(\mu_0,\mu_1)\Bigg)
\end{split}
\end{equation}
by the definition of the IGW distance. 

We shall verify that condition (\ref{eq:non_neg}) holds whenever the step size $\tau$ is small enough. Observe that
\begin{align*}
    \IGW^2(\rho^{\tau}_{i-1},\rho^{\tau}_i) \leq 4 \tau \sH(\rho_0\|\gamma)
\end{align*}
from the definition of the JKO scheme. 
In addition, one can find constants $c_{1,\rho_0},\beta_{\rho_0}$, and $\alpha_{\rho_0}$ that  depend only on  $\rho_0$ such that $c_{1,\mu_0} \geq c_{1,\rho_0},$ $\alpha_{\pi} \leq \alpha_{\rho_0}$, and $\beta_{\pi} \leq \beta_{\rho_0}$.
Indeed, by \eqref{eq: lower bound}, 
$c_{1,\mu_0} \geq k_{\rho_0}$. 
For $\alpha_{\pi}$ and $\beta_{\pi}$, observe that 
\[
M_2(\mu_0), M_2(\mu_1) \leq 4d\sH(\rho_0\|\gamma)+2d^2
\]
by \eqref{eq: lower bound}.  Since $\nu \in \cB_{\IGW}(\rho_0,\bar \delta)$, by Lemma 4.2 in \cite{zhang2024gradient} and \eqref{eq: lower bound},
 \[
 \lambda_{\min}(A_i) \geq \frac{k_{\rho_0}}{2\sqrt{2}} \qquad \text{ for $i =1,2$}.
 \] 
Recalling that $\lambda_{\max}(A_i) \leq \sqrt{\lambda_{\max}(\Sigma_{\rho^\tau_{i - 1}})\lambda_{\max}(\Sigma_{\nu})}$, which can be upper bounded solely in terms of the eigenvalues of $\Sigma_{\rho_0}$ and the value $\sH(\rho_0\|\gamma)$ by Lemma 4.2 in \cite{zhang2024gradient}, one obtains $\alpha_\pi \le \alpha_{\rho_0}$ and $\beta_\pi \le \beta_{\rho_0}$ for some constants $\alpha_{\rho_0}$ and $\beta_{\rho_0}$ that depend only on $\rho_0$.

As such, whenever 
\[\sqrt{\tau} \leq \min\left\{ \frac{c_{1,\rho_0}}
        {16\sqrt{2}\left(\sqrt{\mathsf{H}(\rho_0\Vert\gamma)}\,\alpha_{\rho_0} +\beta_{\rho_0}\right)},1\right\}
\]
condition (\ref{eq:non_neg}) holds, which guarantees inequality \eqref{eq:prox_ineq2}.

\medskip

\underline{Step 2}. 
We now use inequality  \eqref{eq:prox_ineq2} to find an upper bound on the time derivative of $d_{\tau \eta}^2(t,t)$. We define some notation to simplify the subsequent derivation. For $i=1,\ldots,n$ and $t\in((i-1)\tau,i\tau]$, set 
\[
\sD_\tau(t) \coloneqq \IGW(\rho^\tau_{i-1},\rho^\tau_i) \quad \text{and} \quad \sigma_\tau(t)
\coloneqq
-\frac{4\sqrt{2}}{\tau c_{1,\rho_0}}
\sD_\tau(t)\alpha_{\rho_0}
-\frac{8\sqrt{2}}{c_{1,\rho_0}}
\beta_{\rho_0}.
\]
Recalling the weighting function $\ell_\tau$, define the interpolations
\begin{align*}
\sH_\tau(t) &\coloneqq (1-\ell_\tau(t)) \sH(\rho^\tau_{i-1}\|\gamma) + \ell_\tau(t) \sH(\rho^\tau_i\|\gamma),\\
\sR_\tau(t) &\coloneqq \sH_\tau(t) - \sH(\rho^\tau_i\|\gamma),
\end{align*}
where $i=1,\ldots,n$ and $t\in((i-1)\tau,i\tau]$.
With this notation, we have
\begin{equation}\label{eq:1}
\begin{split}
     \sH(\rho^\tau_i\|\gamma) + \frac{\sD^2_{\tau}(t)}{2\tau} &\leq \sH(\nu\|\gamma) + \frac{1}{2 \tau} \IGW^2(\nu,\rho^\tau_{i -1}) - \left(\frac{1}{2\tau} + \sigma_{\tau}(t)\right) \IGW^2(\rho^\tau_i,\nu)\\ 
     &\quad + \frac{12 \sqrt 2}{c_{1,\rho_0}}\beta_{\rho_0}\sD^2_{\tau}(t) + \frac{1}{2\tau}\frac{12 \sqrt 2}{c_{1,\rho_0}}\alpha_{\rho_0}\sD^{3}_{\tau}(t).
     \end{split}
\end{equation}
When $\tau \leq 1/2$ and $\sD_{\tau}(t) \leq 1,$ it follows that
\begin{align*}
    \frac{12 \sqrt 2}{c_{1,\rho_0}}\beta_{\rho_0}\sD^2_{\tau}(t) + \frac{1}{2\tau}\frac{12 \sqrt 2}{c_{1,\rho_0}}\alpha_{\rho_0}\sD^{3}_{\tau}(t) &\leq \left(\frac{12 \sqrt 2}{c_{1,\rho_0}} \beta_{\rho_0} + \frac{12 \sqrt 2}{c_{1,\rho_0}} \alpha_{\rho_0}\right)\frac{\sD^2_{\tau}(t)}{2 \tau}\coloneqq \frac{\sD^2_{\tau}(t)}{2 \iota_{\rho_0} \tau}
\end{align*}
Observe that 
\[
\frac{d}{dt} d^2_\tau(t;\nu) = \frac{\IGW^2(\rho^\tau_i,\nu)-\IGW^2(\rho^\tau_{i-1},\nu)}{\tau},\quad t\in ((i-1)\tau,i\tau],
\]
which further simplifies \eqref{eq:1} into
\begin{align*}
    \frac{1}{2}\frac{d}{dt} d^2_\tau(t;\nu) + \sigma_\tau(t)\IGW^2(\rho_i,\nu) &\leq \sH(\nu\|\gamma) - \sH_{\tau}(t) + R_{\tau}(t) - \frac{\sD^2_{\tau}(t)}{2 \tau} + \frac{\sD^2_{\tau}(t)}{2 \iota_{\rho_0}\tau}\\
    & \leq \sH(\nu\|\gamma) - \sH_{\tau}(t) + R_{\tau}(t) + \frac{\sD^2_{\tau}(t)}{2 \iota_{\rho_0}\tau}.
\end{align*}
Note that $\sigma_{\tau}(t) \leq 0$ for all $t \in [0,\delta)$ and $\big|d^2_\tau(t;\nu)- \IGW^2\big(\bar{\rho}^n_t,\nu\big)\big|\leq 2d_\tau(t;\nu)\sD_\tau(t) + \sD^2_\tau(t).$ Insert this into the above to further obtain
\begin{align*}
    &\frac{1}{2}\frac{d}{dt} d^2_\tau(t;\nu) + \sigma_\tau(t) d^2_\tau(t;\nu) - 2|\sigma_\tau(t)|d_\tau(t;\nu)\sD_\tau(t)\\
    &\quad \leq |\sigma_\tau(t)|\sD^2_\tau(t) + \sH(\nu\|\gamma) - \sH_\tau(t) +  \sR_\tau(t) + \frac{\sD^2_{\tau}(t)}{2 \iota_{\rho_0}\tau}.
\end{align*}

Fix $t$, and consider a linear combination with coefficients $1-\ell_\eta(s),\ell_\eta(s)$ of the above inequality instantiated at $\nu=\rho^\eta_{j-1},\rho^\eta_j$, respectively. By the fact that $(1-\ell_\eta(s))d_\tau(t;\rho^\eta_{j-1}) + \ell_\eta(s)d_\tau(t;\rho^\eta_j)\leq d_{\tau\eta}(t,s)$, we have 
\begin{align*}
    &\frac{1}{2}\frac{d}{dt} d^2_{\tau\eta}(t,s) + \sigma_\tau(t) d^2_{\tau\eta}(t,s) + 2\sigma_\tau(t)d_{\tau\eta}(t,s)\sD_\tau(t)\\
    &\quad \leq -\sigma_\tau(t)\sD^2_\tau(t) + \sH_\eta(s) - \sH_\tau(t) +  \sR_\tau(t) + \frac{\sD^2_{\tau}(t)}{2 \iota_{\rho_0}\tau}.
\end{align*}
Switching the roles of $\tau$ and $\eta$ similarly yields
\begin{align*}
    &\frac{1}{2}\frac{d}{dt} d^2_{\eta\tau}(t,s) + \sigma_\eta(t) d^2_{\eta\tau}(t,s) + 2\sigma_\eta(t)d_{\eta\tau}(t,s)\sD_\eta(t)\\
    &\quad \leq -\sigma_\eta(t)\sD^2_\eta(t) + \sH_\tau(s) - \sH_\eta(t) +  \sR_\eta(t) + \frac{\sD^2_{\eta}(t)}{2 \iota_{\rho_0}\eta}.
\end{align*}
From the chain rule and since $d^2_{\tau\eta}(t,s) = d^2_{\eta\tau}(s,t),$ evaluating the two inequalities above at $s = t$ and then adding them yields
\begin{equation}
\label{eq:var_ineq}
\begin{split}
    &\frac{1}{2}\frac{d}{dt} d^2_{\tau\eta}(t,t) + (\sigma_\tau(t)+\sigma_\eta(t)) d^2_{\tau\eta}(t,t)  - 2\big(|\sigma_\tau(t)|\sD_\tau(t) + |\sigma_\eta(t)|\sD_\eta(t)\big) d_{\tau\eta}(t,t)\\
    &\quad \leq |\sigma_\tau(t)|\sD^2_\tau(t) +  |\sigma_\eta(t)|\sD^2_\eta(t) +  \sR_\tau(t)+  \sR_\eta(t) + \frac{\sD^2_{\tau}(t)}{2 \iota_{\rho_0}\tau} + \frac{\sD^2_{\tau}(t)}{2 \iota_{\rho_0}\eta}.
    \end{split}
\end{equation}
Here, we have used above that $\sigma_{\tau}(t),\sigma_{\eta}(t) \leq 0$ for all $t \in [0,\delta).$

\medskip

\underline{Step 3}.
Finally, we invoke the following Grönwall-type lemma.

\begin{lemma}[A version of Gr\"onwall {\cite[Lemma 4.4]{zhang2024gradient}}]\label{lem:gronwall}
    Let $a,b,c:[0,\delta]\to \RR$ be integrable functions and $x:[0,\delta]\to\RR$ be continuous, such that
    \begin{align*}
        \frac{d}{dt} x^2(t) + a(t)x^2(t)  \leq c(t)+ b(t)x(t),\quad \forall t\in[0,\delta].
    \end{align*}
    Denoting $G(t)\coloneqq\int_0^t a(s) ds$, for every $T\in[0,\delta]$, we have
    \begin{align*}
        e^{G(T)/2} \|x(T)\| \leq \left(x^2(0) + \sup_{t\in[0,T]} \int_0^t  e^{G(s)}c(s) ds \right)^{1/2} + 2\int_0^T\big|b(t) e^{G(t)/2}\big| dt. 
    \end{align*}
\end{lemma}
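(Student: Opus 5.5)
The plan is the standard integrating-factor argument, carried out carefully because $x$ is only continuous. The differential inequality is understood in the sense that $x^2$ is absolutely continuous and the inequality holds for a.e. $t$; this is how it is used in \eqref{eq:var_ineq}, where $d^2_{\tau\eta}(t,t)$ is piecewise polynomial in $t$. Set $y(t)\coloneqq e^{G(t)/2}|x(t)|$, so that
\[
y^2(t)=e^{G(t)}x^2(t).
\]
Since $G$ is absolutely continuous with $G'=a$ a.e., the product rule for absolutely continuous functions gives
\[
\frac{d}{dt}\bigl(e^{G}x^2\bigr)=e^{G}\bigl(\tfrac{d}{dt}x^2+a x^2\bigr)\le e^{G}c+e^{G}b\,x \quad\text{a.e.}
\]
Integrating from $0$ to $t$, and using $G(0)=0$ together with $e^{G}|bx|=|b|e^{G/2}\,y$, yields for every $t\in[0,T]$
\[
y^2(t)\le x^2(0)+\int_0^t e^{G(s)}c(s)\,ds+\int_0^t |b(s)|e^{G(s)/2}\,y(s)\,ds .
\]

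Next, write $K\coloneqq x^2(0)+\sup_{t\in[0,T]}\int_0^t e^{G}c\,ds$ and $\beta(s)\coloneqq|b(s)|e^{G(s)/2}$, so that
\[
y^2(t)\le K+\int_0^t\beta y\,ds \quad\text{for all } t\le T.
\]
This is a Bihari/Gr\"onwall inequality with square-root nonlinearity, which we handle in three steps.

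\textbf{Step 1.} Let $Y(t)\coloneqq\sup_{s\le t}y(s)$, which is finite by continuity. Taking suprema gives
\[
Y(t)^2\le K+Y(t)\int_0^t\beta .
\]

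\textbf{Step 2.} Solving this quadratic inequality in $Y(t)$ gives
\[
Y(t)\le \tfrac12\textstyle\int_0^t\beta+\Bigl(K+\tfrac14\bigl(\int_0^t\beta\bigr)^2\Bigr)^{1/2}.
\]

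\textbf{Step 3.} By subadditivity of the square root, the right-hand side is at most $K^{1/2}+\int_0^t\beta$. This is already at most the claimed bound $K^{1/2}+2\int_0^T\beta$, with room to spare in the constant.

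Alternatively, one can use the comparison function $u(t)=K+\int_0^t\beta y$. From $y\le\sqrt u$ we get $u'\le\beta\sqrt u$, hence $(\sqrt u)'\le\beta/2$ and therefore $\sqrt{u(t)}\le K^{1/2}+\tfrac12\int_0^t\beta$. I expect to use the first route, since it avoids differentiating $\sqrt u$ where $u$ might vanish, the case $K=0$ needing a small $\varepsilon$-regularization otherwise.

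If $K$ happens to be negative, the bound must be read with $K$ replaced by $\max\{K,0\}$. This is harmless in the application, where the right-hand side is nonnegative; I will note this explicitly.

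I expect the only delicate point to be the regularity bookkeeping: justifying the product rule and integration when $a,b,c$ are merely integrable and $x^2$ is absolutely continuous but $x$ itself need not be differentiable where it vanishes. Working throughout with $x^2$ rather than $x$, and with $|x|$ only inside the integrals, avoids this. The constant $2$ in the statement then holds comfortably.
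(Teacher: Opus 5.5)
Your proof is correct. The integrating factor $e^{G}$, integration of the absolutely continuous function $e^{G}x^2$, and the running-supremum bound $Y^2\le K+Y\int_0^T|b|e^{G/2}\,dt$ give $e^{G(T)/2}|x(T)|\le K^{1/2}+\int_0^T|b|e^{G/2}\,dt$, which is stronger than the stated bound; this is the standard argument for such estimates, and the paper gives no proof of its own, simply importing the lemma from \cite[Lemma 4.4]{zhang2024gradient}. You can drop your caveat about $K<0$: the supremum over $t\in[0,T]$ includes $t=0$, so $K\ge x^2(0)\ge 0$ always.
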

Apply \cref{lem:gronwall} to \eqref{eq:var_ineq} as follows. Set
 \begin{align*}
x(t) &= d_{\tau \eta}(t,t), \\
a(t) &= 2\big({\sigma}_\tau(t) + {\sigma}_\eta(t)\big), \\
b(t) &= 4\big(|{\sigma}_\tau(t)| \sD_\tau(t) + |{\sigma}_\eta(t)| \sD_\eta(t)\big), \\
c(t) &= 2(|{\sigma}_\tau(t)| \sD^2_\tau(t) + |{\sigma}_\eta(t)| \sD^2_\eta(t)
+ \sR_\tau(t) + \sR_\eta(t) + \frac{\sD^2_\tau(t)}{2\iota_{\rho_0} \tau} + \frac{\sD^2_\eta(t)}{2\iota_{\rho_0} \eta}).
\end{align*}
By inequality \eqref{eq:ball_ineq},
\[
\int_0^t \sD_{\tau}(s)\,ds
\leq 
\int_0^{\delta} \sD_{\tau}(s)\,ds = \sum^n_{i =1} \int^{i\tau}_{(i-1)\tau} \sD_{\tau}(s)\,ds= \sum_{i=1}^n \tau \IGW(\rho^{\tau}_i,\rho^{\tau}_{i-1}) \leq 2\tau\sqrt{\delta\sH(\rho_0\|\gamma)}.
\]
Similarly,
\[
\int_0^t \sD^2_{\tau}(s)\,ds
\leq 
\int_0^{\delta} \sD^2_{\tau}(s)\,ds = \sum^n_{i =1} \int^{i\tau}_{(i-1)\tau} \sD^2_{\tau}(s)\,ds\\
= \sum_{i=1}^n \tau \IGW^2(\rho^{\tau}_i,\rho^{\tau}_{i-1}) \leq 2\tau^2\sH(\rho_0\|\gamma)
\]
and
\begin{align*}
    \bigg|\int^t_0 \sR_{\tau}(s) ds \bigg| &\leq \int_0^{\delta} \sR_{\tau}(s)\,ds \leq \bigg| \sum_{i = 1}^n\int^{i\tau}_{(i-1)\tau} \sH(\rho^{\tau}_i\|\gamma) - \sH(\rho^{\tau}_{i-1}\|\gamma) ds \bigg | \leq \tau\sH(\rho_0\|\gamma).
\end{align*}
Similar bounds hold when replacing $\tau$ by $\eta$. As such, one has
\begin{align*}
\left|\int_0^t a(s)\,ds\right| \lesssim_{\rho_0} 1, \quad 
\left|\int_0^t b(s)\,ds\right| \lesssim_{\rho_0} \tau + \eta, \quad
\left|\int_0^t c(s)\,ds\right| \lesssim_{\rho_0} \tau + \eta.
\end{align*}
From \cref{lem:gronwall}, for all $\tau ,\eta$ sufficiently small,
\begin{align*}
\|x(T)\|
&\lesssim_{\rho_0} d_{\tau \eta}(0,0)
+ (\tau +\eta)^{1/2}
+ (\tau +\eta),
\end{align*}
and for all $T \in [0,\delta]$,
\[
d_{\tau \eta}(T,T) \lesssim_{\rho_0} (\tau+\eta)^{1/2}
+ 2(\tau+\eta),
\]
which gives us the desired estimate. \qed

\bibliographystyle{alpha}
\bibliography{references}
\end{document}